\newtheorem{theorem}{Theorem}[section]
\newtheorem{proposition}[theorem]{Proposition}
\newtheorem{corollary}[theorem]{Corollary}
\newtheorem{lemma}[theorem]{Lemma}
\newtheorem{question}[theorem]{Question}
\newtheorem{definition}[theorem]{Definition}
\theoremstyle{definition}
\newtheorem{remark}[theorem]{Remark}
\newtheorem{notation}[theorem]{Notation}
\newcommand{\dashedrightarrow}[1][2pt]{%
  \settowidth{\@tempdima}{$\rightarrow$}\rightarrow% typeset arrow
  \makebox[-\@tempdima]{\hskip-1.5ex\color{white}\rule[0.5ex]{#1}{1pt}}% typeset overlay
  \phantom{\rightarrow}% advance appropriate horizontal distance
}
\def\CC{\mathbb{C}}
\def\QQ{\mathbb{Q}}
\def\RR{\mathbb{R}}
\def\NN{\mathbb{N}}
\def\ZZ{\mathbb{Z}}
\def\PP{\mathbb{P}}
\def\dim{\operatorname{dim}}
\def\cF{\mathcal{F}}
\def\cG{\mathcal{G}}
\def\Gr{\operatorname{Gr}}
\def\QH{\operatorname{QH}}
\def\H{\operatorname{H}}
\def\HH{\operatorname{HH}}
\def\CO{\mathcal{CO}}
\def\CF{\operatorname{CF}}
\def\HF{\operatorname{HF}}
\def\cM{\mathcal{M}}
\def\cG{\mathcal{G}}
\def\MC{\operatorname{MC}}
\def\H{\operatorname{H}}
\def\bi{\textbf{i}}
\def\bj{\textbf{j}}
\def\tD{\widetilde{D}}
\definecolor{darkgreen}{RGB}{0,153,0}
\definecolor{darkred}{RGB}{204,0,0}
\definecolor{darkblue}{RGB}{0,51,204}
\definecolor{red}{RGB}{242,43,29}
\begin{document}

\title{Curved Fukaya algebras and the Dubrovin spectrum}

\author{Marco Castronovo}
\address{Department of Mathematics, Columbia University}
\email{marco.castronovo@columbia.edu}

\thanks{Partially supported by an AMS-Simons Travel Grant.}

\begin{abstract}

Under simplified axioms on moduli spaces of pseudo-holomorphic curves, we show that
weakly unobstructed Fukaya algebras of Floer-nontrivial Lagrangians in a compact
symplectic manifold must have curvature in the spectrum of an operator introduced by
Dubrovin, which acts on the big quantum cohomology. We use the example of the complex
Grassmannian $\Gr(2,4)$ to illustrate a decoupling phenomenon, where the eigenvalues
of finite energy truncations become simple under explicit bulk-deformations.

\end{abstract}

\maketitle
\thispagestyle{empty}

\section{Introduction}\label{SecIntroduction}

\subsection{Cohomological $q$-deformations}

Let $X$ be a compact symplectic manifold, and denote $\Lambda$
the Novikov field over $\CC$ with formal parameter $q$. The cohomology $\H(X;\CC)$
of $X$ has a $q$-deformation $\QH(X)=\H(X;\CC)\otimes\Lambda$ known as quantum cohomology,
whose product is defined by counts of $J$-holomorphic spheres in $X$. This is an associative
$\Lambda$-algebra, with the classical counterpart being recovered for $q=0$.
See Abouzaid-McLean-Smith \cite{AMS} for the most recent approach to this construction,
which also yields $q$-deformations of many generalized cohomology theories.

\subsection{Categorical $q$-deformations}

Starting with Fukaya-Oh-Ohta-Ono \cite{FOOO} a relative analogue of the construction
above has been studied, whose output is a family of $A_\infty$ categories
$\cF_\lambda(X)$ over $\Lambda$ known as Fukaya categories. General Lagrangians
$L\subset X$ have an obstructed $A_\infty$ algebra of Floer cochains $\CF(L)$. To
deal with this, one takes as objects of $\cF_\lambda(X)$ the pairs $(L,D)$ with
$D\in\MC(L)$ a solution of the projective Maurer-Cartan equation in $\CF(L)$ with
curvature $\lambda\in\Lambda$. Morphisms are given by transverse intersection points, with
composition maps arising from counts of $J$-holomorphic polygons in $X$ with Lagrangian
boundary conditions, twisted by $D$. It is expected that taking Hochschild
cohomology recovers the generalized eigenspace $\HH(\cF_\lambda(X))\cong \QH_\lambda(X)$
of eigenvalue $\lambda\in\Lambda$ for a suitable operator, whose action on quantum
cohomology is multiplication by the first Chern class $c_1$; see Perutz-Sheridan \cite{PS} and
Ganatra \cite{G} for results in this direction.

\subsection{The status of computations}

While computations of quantum cohomology algebras are available in many examples,
this is not the case at the categorical level. One could hope to begin the computation of
a Fukaya category by finding finitely many Lagrangians that
generate the split-closed derived category of its triangulated closure. General results of
this kind were obtained for non-compact targets: see Seidel \cite{Se} for Lefschetz fibrations;
Chantraine-Dimitroglou Rizell-Ghiggini-Golovko \cite{CDGG} and Ganatra-Pardon-Shende
\cite{GPS} for Weinstein sectors. In the compact case, generators have been found
in a handful of examples using a variety of techniques: toric Fano manifolds
(Evans-Lekili \cite{EL}); Calabi-Yau and Fano hypersurfaces (Sheridan \cite{Sh15, Sh16});
curves (Seidel \cite{Se11} and Efimov \cite{E}); some Grassmannians
(Castronovo \cite{C20, C23}).

\subsection{The multiplicity issue}

Suppose that one has found a finite set of Lagrangians $L_i\in\cF_\lambda(X)$
which are nontrivial in the derived category, i.e. with $\HF(L_i)\neq 0$ for all $i$; this
can often be checked by examining the disk potential of $L_i$, see e.g. Sheridan
\cite[Proposition 4.2]{Sh16} for monotone tori. A criterion is available for showing that these
objects generate, due to Abouzaid \cite{Ab} for Weinstein domains and announced in the
compact case by Abouzaid-Fukaya-Oh-Ohta-Ono; see
Sheridan \cite[Proposition 1.10]{Sh16} for a proof in the monotone setting. When $\dim\QH_\lambda(X)=1$, this
criterion implies that any of the $L_i$ generates, so a key difficulty
in computing Fukaya categories is to address what happens if $\dim\QH_\lambda(X)>1$. In
the latter case one could hope that if $\HF(L_i,L_j)=0$ for $i\neq j$ then generation
holds, but Floer cohomologies of pairs of Lagrangians are hard to compute in
practice, unless one knows in advance that the Lagrangians $L_i$ can be disjoined by
Hamiltonian isotopies.

\subsection{Generic simplicity}

The starting point for this article is the work of Dubrovin \cite{D96}.
Assume that $\H(X;\CC)$ is concentrated in even degrees, and replace $\QH(X)$ with the
big quantum cohomology $\QH^B(X)$. In many interesting
examples this algebra is semisimple for some values of the deformation parameter $B$, with
a decomposition into fields given by the generalized eigenspaces of the $B$-dependent
operator of multiplication by
$$K^B = c_1 - \sum_{j\neq 2}\frac{j-2}{2}B_j \quad .$$
Here $c_1$ is corrected by a term depending on $B$ and the degrees of its homogeneous
components $B=\sum_{j}B_j$. When $X$ is a Fano variety, Dubrovin conjectured \cite[Conjecture
4.2.2]{D98} that
$K^B$ has simple spectrum (i.e. all eigenvalues have multiplicity one) for generic $B$ if and only if the derived category of coherent
sheaves on $X$ has a full exceptional collection; see Halpern-Leistner \cite{HL} for some
recent progress.

\subsection{Curvatures are eigenvalues}

The big quantum cohomology $\QH^B(X)$ is a generalization of $\QH(X)$, where one includes
in the $q$-deformed product contributions coming from $J$-holomorphic spheres with extra
marked points constrained to map to a so-called
bulk cycle in $X$ of homology class $B$ (Figure \ref{FigBasicCorrespondences}(A)). This construction has a
relative analogue, yielding a family of $A_\infty$ categories $\cF^B_\lambda(X)$ whose
objects are pairs $(L,D)$ with $D\in\MC^B(L)$ a solution of the projective Maurer-Cartan
equation in the $A_\infty$ algebra of bulk-deformed Floer cochains $\CF^B(L)$ with
curvature $\lambda\in\Lambda$. Composition maps in $\cF^B_\lambda(X)$ arise from counts of
$J$-holomorphic polygons in $X$ with Lagrangian boundary conditions, with extra interior
and boundary marked points mapping to the bulk cycle and the Maurer-Cartan chain respectively
(Figure \ref{FigBasicCorrespondences}(B)). Even after
fixing $L$, the property $\MC^B(L)\neq\emptyset$
may depend on $B$, and one should think of the curvature $\lambda\in\Lambda$ as a function
depending on the bulk-deformation $B$. We prove the following.

\begin{theorem}(Theorem \ref{ThmCurvatureInSpectrum})
Assume that $D\in\MC^B(L)$ is a solution of the projective Maurer-Cartan equation in the
$A_\infty$ algebra of bulk-deformed Floer cochains $\CF^B(L)$, and call $\lambda\in\Lambda$
the curvature of the corresponding $A_\infty$ algebra $\CF^B(L,D)$ twisted by $D$. Then
$\HF^B(L,D)\neq 0$ implies that $\lambda$ is an eigenvalue of Dubrovin's operator $K^B$.
\end{theorem}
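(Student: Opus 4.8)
The plan is to realize $\lambda$ as a point of the spectrum of quantum multiplication by $K^B$ by showing that $K^B-\lambda\cdot\mathbf 1$ fails to be invertible in the finite-dimensional commutative $\Lambda$-algebra $\QH^B(X)$, and to detect this failure through the bulk-deformed closed-open map evaluated at the object $(L,D)$. First I would recall the construction of this map: counting $J$-holomorphic disks with boundary on $L$ twisted by $D$, with interior marked points constrained to a bulk cycle of class $B$, one extra interior input carrying a quantum cohomology class, and a single boundary output, produces a map
$$\CO^0_{(L,D)}\colon \QH^B(X)\longrightarrow \HF^B(L,D).$$
Under the simplified axioms on the moduli spaces, the standard $A_\infty$ compatibility arguments show that $\CO^0_{(L,D)}$ is a unital homomorphism of $\Lambda$-algebras; in particular it sends $\mathbf 1$ to $\mathbf 1$, and whenever $\HF^B(L,D)\neq 0$ it is a nonzero map.

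Second, and this is the geometric heart of the argument, I would prove the identity
$$\CO^0_{(L,D)}(K^B)=\lambda\cdot\mathbf 1\qquad\text{in }\HF^B(L,D).$$
The class $K^B=c_1-\sum_{j\neq 2}\frac{j-2}{2}B_j$ is precisely the Euler vector field of $\QH^B(X)$ evaluated at the bulk point $B$, so the content of the identity is that, after bulk deformation, the interior constraint by $c_1$ measures the same count as the curvature $\lambda\cdot\mathbf 1$ of $\CF^B(L,D)$. I would extract this from a Stokes/cobordism argument on the relevant one-dimensional moduli space: the divisor equation identifies the $c_1$-constrained disk count with a Maslov-weighted count of the disks contributing to the curvature, while the bulk divisor equation for an interior insertion of a degree-$j$ cycle produces exactly the correction factors $\frac{j-2}{2}$, with the degree-$2$ part dropping out. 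Matching the codimension-one boundary strata of the moduli space then forces the two counts to coincide.

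Finally I would conclude algebraically. Combining the two steps gives $\CO^0_{(L,D)}(K^B-\lambda\cdot\mathbf 1)=0$. If $K^B-\lambda\cdot\mathbf 1$ were invertible in $\QH^B(X)$, then applying the unital ring homomorphism $\CO^0_{(L,D)}$ to the relation $(K^B-\lambda\cdot\mathbf 1)(K^B-\lambda\cdot\mathbf 1)^{-1}=\mathbf 1$ would yield $0=\mathbf 1$ in $\HF^B(L,D)$, contradicting $\HF^B(L,D)\neq 0$. Hence $K^B-\lambda\cdot\mathbf 1$ is not invertible, which for multiplication by a fixed element of a finite-dimensional commutative $\Lambda$-algebra is exactly the statement that $\lambda$ is an eigenvalue of $K^B$.

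I expect the main obstacle to be the second step: establishing that it is the bulk-corrected class $K^B$, rather than $c_1$ alone, whose closed-open image equals the curvature. Tracking the degree-$j$ bulk insertions through the divisor equation and verifying that they assemble into the precise coefficients $\frac{j-2}{2}$ is where the simplified moduli axioms must be used with care, and where any sign or normalization discrepancy would surface.
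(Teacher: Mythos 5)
Your route is the closed-open one that the paper itself flags in the introduction as an alternative (it is Sheridan's proof of Auroux's result in the monotone setting): construct $(\CO^B_\lambda)^0:\QH^B(X)\to\HF^B(L,D)$, show it is a unital algebra map satisfying $(\CO^B_\lambda)^0(K^B)=\lambda[L]$, and conclude by applying it to the relation $(K^B-\lambda[X])\star_B(K^B-\lambda[X])^{-1}=[X]$. The paper never constructs a ring map. It instead builds the BMC-deformed cap product $\cap_{B,D}$, proves it descends to homology when $D\in\MC^B(L)$ (Proposition \ref{PropBMCCapOnHomology}), that it makes $\HF^B(L,D)$ a unital $\QH^B(X)$-module (Proposition \ref{PropModuleStructure}), and that $K^B\cap_{B,D}[L]=W^B(D)[L]$ (Proposition \ref{PropKCapL}); the final contradiction is then the same as yours, phrased with the module action in place of a ring homomorphism. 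This is not a cosmetic difference: the module statement is strictly what the axioms of Section \ref{SecAxioms} are tailored to, via the geodesic-constraint moduli of axioms (3)--(4) and their degenerations. Your claim that the unital-algebra-map property of $(\CO^B_\lambda)^0$ follows from ``standard $A_\infty$ compatibility arguments'' under the simplified axioms is not available in this framework; the paper explicitly defers such constructions to Hugtenberg and Venugopalan--Woodward--Xu, which operate under different hypotheses (de Rham model, stabilizing divisors).

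The genuine gap is in your second step. You assert that matching codimension-one boundary strata ``forces the two counts to coincide.'' At this level of generality that is false at the chain level, and the paper devotes a remark to exactly this point: $K^B\cap_{B,D}L-m^0_{B,D}(1)$ is only $m^1_{B,D}$-exact, not zero, in contrast with the case where $D$ is a local system (codegree one), which is the setting of Auroux and Sheridan that your sketch implicitly imports. Your divisor-equation bookkeeping accounts for the bulk correction factors $\frac{j-2}{2}$, but not for the Maurer-Cartan insertions: the chain-level discrepancy involves the shifted codegrees $\frac{i-1}{2}$ of the components $D_i$, and the paper's proof works by exhibiting an explicit primitive, the rescaled chain $\tD=\sum_i\frac{i-1}{2}D_i$ (Definition \ref{DefDScaled}), showing via Lemma \ref{LemmaM1DScaled} and the forgetful axioms (5)--(6) that the discrepancy equals $m^1_{B,D}(\tD)$, and using the degree-by-degree vanishing of the Maurer-Cartan equation (Proposition \ref{PropMC}) to eliminate contributions of classes with $\mu(\beta)\neq 2+|\bi(k)|+|\bj(l)|$. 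The identity you want is true in $\HF^B(L,D)$, so your overall architecture can be repaired, but without identifying this primitive (or an equivalent cobordism producing it) your Stokes argument does not close; this is precisely the hardest part of the paper's proof, and ``the counts coincide'' is not a substitute for it.
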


This theorem generalizes a result of Auroux \cite[Proposition 6.8]{Au07}, which dealt with
the case where $B=0$, $K^0=c_1$ and $D$ is a local system, corresponding to the
Maurer-Cartan solution $D$ having codegree one; see Auroux
\cite[Lemma 4.1]{Au09} for a comparison between local system and Maurer-Cartan twists.
Our proof follows Auroux's approach closely. First, we choose as model for Floer
cochains $\CF(L)=C(L)\otimes\Lambda$ the normalized smooth singular chains, and spell out a set of simplified axioms on moduli of pseudo-holomorphic
curves that allows us to use Fukaya's general picture \cite{F} of correspondences and
pull-push maps without appealing to virtual techniques; see Section \ref{SecAxioms}
for more on this approach. In Section \ref{SecBMCcap}, we construct a chain level
operation $\cap_{B,D}$ called BMC-deformed $q$-cap, which generalizes Auroux's $q$-deformation of
the classical cap product \cite[Definition 6.2]{Au07} by allowing a bulk-deformation
parameter $B$ coupled with a Maurer-Cartan deformation parameter $D$. When $D\in\MC^B(L)$,
we prove that this operation descends to a map
$\QH^B(X)\otimes\HF^B(L,D)\to\HF^B(L,D)$ (Proposition \ref{PropBMCCapOnHomology}). In
Section \ref{SecBMCmod} we verify that $\cap_{B,D}$ endows the bulk-deformed Floer
cohomology $\HF^B(L,D)$
twisted by $D$ with the structure of a $\QH^B(X)$-module (Proposition \ref{PropModuleStructure}).
Finally, in Section \ref{SecSpectrum} we reduce the main theorem to proving the identity
$K^B\cap_{B,D}[L]=\lambda[L]$, where $\lambda=W^B(D)$ is the curvature
of the $A_\infty$ algebra of bulk-deformed Floer cochains $\CF^B(L,D)$ twisted by $D$.
The notation $W^B(D)$ suggests that $\lambda$ can also be thought of as the value at $D$
of the $B$-deformed disk potential $W^B$ of $L$, for which we provide an explicit formula
(Corollary \ref{CorBMCDiskPotential}). The computation of this BMC-deformed $q$-cap product
(Proposition \ref{PropKCapL}) is the hardest part of the proof, and the one that differs
the most from the case with no bulk or Maurer-Cartan deformations. Indeed, a similar
identity holds at the chain level in that case, while it is only true in homology at this
level of generality. The proof consists in checking that $K^B\cap_{B,D}L - W^B(D)L$ is exact in
$\CF^B(L,D)$, which is done by combining degree arguments on the terms of the projective
Maurer-Cartan equation (Proposition \ref{PropMC}) with Axioms (5) and (6) from Section \ref{SecAxioms}.
These two axioms are identities relating chains obtained from correspondences induced by
moduli spaces of disks with geodesic constraints contributing to $-\cap_{B,D}L$ to
those induced by unconstrained disks, obtained by forgetting the vacuous boundary
constraint of mapping to $L$; see the discussion in Section \ref{SecAxioms}.
One should expect, by analogy with the case of no bulk, that taking Hochschild
cohomology will recover the generalized eigenspace
$\HH(\cF^B_\lambda(X))\cong \QH^B_\lambda(X)$ of eigenvalue $\lambda\in\Lambda$ for
Dubrovin's operator $K^B$.
In light of this, Theorem \ref{ThmCurvatureInSpectrum} gives evidence that the multiplicity
issue arising in the computation of $\cF_\lambda(X)$ will disappear in the bulk-deformed
$\cF^B_\lambda(X)$ whenever Dubrovin's operator $K^B$ has generically simple spectrum, thus
simplifying the problem of describing generators in such cases.

\subsection{Finite energy decoupling}

For explicit calculations, one might want to go beyond Dubrovin's observation that the
spectrum of $K^B$ is often generically simple and ask the following.

\begin{question}\label{QExplicitDecoupling}
Given $X$, for which bulk parameters $B$ is the spectrum of $K^B$ simple?
\end{question}

For example, combining Dubrovin's conjecture with
results of Kapranov \cite{Ka} and Kuznetsov-Polishchuk \cite{KP}, one can expect
that $K^B$ will have generically simple spectrum for homogeneous varieties $G/P$
at least in classical Lie types; see Iritani-Koto \cite[Proposition 5.11]{IK} for
unconditional results in this direction.
In Section \ref{SecDecoupling} we focus on what is perhaps the easiest example
of a new phenomenon called finite energy decoupling, that we believe will be important to
study further in order to address Question \ref{QExplicitDecoupling} in practice. After
fixing an energy level $\alpha\in\RR_{> 0}$, one can explicitly determine those $B$ for
which $\dim\QH^B_\lambda(X)=1$ for all eigenvalues
$\lambda\in\Lambda$ of the truncation of $K^B$ modulo $q^\alpha$.
Truncation modulo a power of the formal parameter $q$
corresponds to considering contributions of $J$-holomorphic spheres satisfying an energy bound independent
of the homotopy class.
While finite energy decoupling is not the same as decoupling the eigenvalues
in the spectrum of $K^B$, the two must be related: see Proposition \ref{PropSimpleTruncations}
for some basic observations. An appealing feature of finite energy truncations of $K^B$
is that they can be explicitly computed by combining the WDVV equations with
the knowledge of a few Gromov-Witten numbers. This allows us to prove the following.

\ytableausetup{boxsize=0.3em}
\begin{theorem}(Theorem \ref{ThmDecouplingGr24})
Consider the Grassmannian $X=\Gr(2,4)$ and the energy level $\alpha=2$. The truncation
of $K^B$ modulo $q^2$ has simple spectrum when $B\neq 0$ is supported on the Schubert
cycles $X_{\ydiagram{2,0}}, X_{\ydiagram{1,1}}, X_{\ydiagram{2,2}}\subset\Gr(2,4)$, while
it doesn't when it is supported on the Schubert cycle $X_{\ydiagram{2,1}}\subset\Gr(2,4)$.
\end{theorem}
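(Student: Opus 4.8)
The plan is to turn the statement into an explicit eigenvalue computation for a $6\times 6$ matrix and to reduce it to the fate of a single degenerate eigenvalue. I fix the Schubert basis $1,\sigma_1,\sigma_2,\sigma_{1,1},\sigma_{2,1},\sigma_{2,2}$ of $\H(\Gr(2,4);\CC)$, with Poincaré pairing $1\leftrightarrow\sigma_{2,2}$, $\sigma_1\leftrightarrow\sigma_{2,1}$, $\sigma_2\leftrightarrow\sigma_{1,1}$ and $c_1=4\sigma_1$. The small quantum cohomology is semisimple of rank six, yet $c_1\star=4\sigma_1\star$ is not regular: its spectrum is $\{0,0\}\cup\{4\sqrt 2\,q^{1/4}\zeta:\zeta^4=1\}$, where the double value $0$ is contributed by the two idempotents sitting at the critical points $(\sigma_1,\sigma_2)=(0,\pm i q^{1/2})$. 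This is exactly the multiplicity issue. Since the four $q^{1/4}$-eigenvalues are distinct and a short perturbation argument keeps them distinct under a nonzero bulk, the whole theorem reduces to whether the chosen $B$ splits the double eigenvalue $0$ inside the truncation modulo $q^2$.

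Next I would make $\QH^B(\Gr(2,4))$ modulo $q^2$ completely explicit for $B=t\tau$ supported on a single Schubert cycle. Genus-$0$ degree-$0$ invariants with at least one bulk insertion vanish, so the classical part of the product is bulk-undeformed and all $B$-dependence modulo $q^2$ lives in the $q^1$ term $\sum_{n}\tfrac{t^n}{n!}\langle\cdot,\cdot,\cdot,\tau^{\otimes n}\rangle_{0,n+3,1}$. A dimension count makes this a finite sum, and because $K^B=4\sigma_1-\tfrac{\deg\tau-2}{2}\,t\tau$ carries the divisor $\sigma_1$, the divisor axiom reduces every needed number to small quantum structure constants together with a few explicit degree-$1$ counts, e.g. $\langle\sigma_2,\sigma_{2,1},\sigma_{2,1}\rangle_1=\langle\sigma_{1,1},\sigma_{2,1},\sigma_{2,1}\rangle_1=1$. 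This produces the matrix of $K^B\star_B$ over $\CC[t][q]$ with entries of $q$-degree at most one, and ``simple spectrum'' means its characteristic polynomial has distinct Puiseux roots in $q$.

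I would then use the involution $\iota$ realized by the duality $V\mapsto V^{\perp}$ of $\Gr(2,4)$, which fixes $\sigma_1,\sigma_{2,1},\sigma_{2,2}$ and exchanges $\sigma_2\leftrightarrow\sigma_{1,1}$; its antisymmetric subspace is the line spanned by $v=\sigma_2-\sigma_{1,1}$. For $B$ on $\sigma_2$ or $\sigma_{1,1}$ the symmetry is broken and the degeneracy already splits to first order in $t$ into $\mp i t q^{1/2}$, since $\sigma_2$ separates the two critical points; these are distinct, so the spectrum is simple. For the self-conjugate cycles the matrix commutes with $\iota$ and preserves $\langle v\rangle$ together with a complementary $5$-dimensional symmetric block, so the two eigenvalues born from $0$ are the antisymmetric eigenvalue on $v$ and the small symmetric one; for $\sigma_{2,2}$ a direct computation gives the distinct values $-tq$ and $\tfrac32 tq$, again simple.

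The main obstacle, and the real content, is $\tau=\sigma_{2,1}$, the only self-conjugate Schubert cycle of odd complex codimension. Here I expect $\sigma_2$ and $\sigma_{1,1}$ to have equal image under $K^B\star_B$ modulo $q^2$, i.e. $v\in\ker(K^B\star_B)$: the classical products coincide by Pieri, and each degree-$1$ correction that could separate them is governed by invariants $\langle v,\sigma_2,\sigma_{2,1}^{\otimes n}\rangle_{0,n+2,1}$ whose total codimension $4+3n$ has parity opposite to the moduli dimension $7+n$ and therefore vanishes. This parity mismatch is precisely what the odd codimension of $\sigma_{2,1}$ buys, and what is absent for $\sigma_{2,2}$, whose even codimension lets the corresponding invariant be nonzero. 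The delicate step is to show that $0$ is then a genuinely non-simple eigenvalue and not one split at higher order within the truncation: for this I would exhibit a second, $\iota$-symmetric kernel vector, proving that the symmetric block is also singular and hence that $\ker(K^B\star_B)$ is two-dimensional. Establishing this vanishing of the symmetric-block determinant — rather than the mere drop of rank forced by the antisymmetric line — is the computation I expect to be the hardest, and the one that genuinely separates $\sigma_{2,1}$ from the other three cycles.
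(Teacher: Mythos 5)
Your proposal takes a genuinely different route from the paper. The paper computes the full $6\times 6$ matrix of the truncation from its table of degree-one Gromov--Witten numbers and then evaluates the discriminant in each single-parameter specialization, getting $\Delta^{t_2}_{<2}=-2^{80}t_2^2q^8+o(q^8)$, $\Delta^{t_3}_{<2}=-2^{80}t_3^2q^8+o(q^8)$, $\Delta^{t_4}_{<2}=0$, and $\Delta^{t_5}_{<2}=25\cdot 2^{76}t_5^2q^9+o(q^9)$; simplicity is read off from nonvanishing of these. Your organization via the transpose involution $\iota$ and the reduction to the fate of the double zero eigenvalue is sound, and your quantitative claims are correct: splittings $\mp itq^{1/2}$ for $\sigma_2,\sigma_{1,1}$ and $\{-tq,\tfrac32 tq\}$ for $\sigma_{2,2}$ reproduce exactly the valuations $q^8,q^8,q^9$ and the coefficients $-2^{80}t^2$ and $25\cdot2^{76}t^2$ of the paper's discriminants, and for $B=t\sigma_{2,1}$ the vector $v=\sigma_2-\sigma_{1,1}$ is indeed annihilated by the truncated operator. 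A further benefit of your setup is that it enforces $\iota$-equivariance, which the paper's own tables violate: quantum Pieri together with $\sigma_{1,1}=\sigma_1\star\sigma_1-\sigma_2$ gives $\sigma_{1,1}\star\sigma_{2,1}=q\sigma_1$, i.e. $\langle\sigma_{1,1},\sigma_{2,1},\sigma_{2,1}\rangle_1=\langle\sigma_2,\sigma_{2,1},\sigma_{2,1}\rangle_1=1$, so in the paper's $t_4$-matrix the entry in the $\sigma_1$-row, $\sigma_{1,1}$-column must be $2t_4q$ rather than $0$ (the tabulated matrices implicitly set the unlisted transposes such as $N(0,1,2,0)$, $N(1,2,1,0)$, $N(2,3,0,0)$ to zero, when they equal their listed partners). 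The conclusions survive this correction, but your symmetric framework would have caught it automatically.

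Two of your steps are not yet proofs. First, the reduction: the bulk enters the matrix at order $q^0$ through the classical terms $-t\,\sigma_d\cup$, so it is not a $q$-adically small perturbation, and the theorem is claimed for \emph{all} $t\neq 0$, not small or generic $t$. The fix is the weight grading ($q$ of weight $8$, $t_d$ of weight $2-|\sigma_d|$, eigenvalues of weight $2$): it forces the four valuation-$1/4$ branches to have constant leading coefficients $4\sqrt{2}\,\zeta$ and forces the leading $q$-coefficient of the discriminant to be a monomial in $t$, which is what upgrades a first-order-in-$t$ computation to every $t\neq 0$. Second, and this is the genuine gap, the case you yourself call the real content is left open: for $B=t\sigma_{2,1}$ you establish $v\in\ker$ but only promise the second, $\iota$-symmetric kernel vector. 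That gap can be closed by a one-line parity argument you almost state: since $\sigma_1$ and $\sigma_{2,1}$ both have odd complex codimension, the dimension constraint in any invariant $\langle K^B,a,b,\sigma_{2,1}^{\otimes n}\rangle_d$ forces $\operatorname{codim}a+\operatorname{codim}b$ to be odd, so $K^B\star_B$ --- and every energy truncation of it, since truncation keeps a subset of graded terms --- interchanges the even subspace $\langle\sigma_\emptyset,\sigma_2,\sigma_{1,1},\sigma_{2,2}\rangle$ and the odd subspace $\langle\sigma_1,\sigma_{2,1}\rangle$. A map interchanging subspaces of dimensions $4$ and $2$ has rank at most $4$, hence kernel of dimension at least $2$, so the zero eigenvalue is never simple. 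This is cleaner and stronger than both your symmetric-block determinant computation and the paper's $\Delta^{t_4}_{<2}=0$: it applies verbatim to the untruncated $K^B$, thereby resolving the dichotomy in the paper's closing remark (for $B$ supported on $X_{2,1}$ the full Dubrovin operator genuinely has non-simple spectrum, not merely $\operatorname{val}(\Delta^B)\geq 2$).

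In summary: your approach is correct in all its stated claims and is structurally more robust than the paper's brute-force computation, but as written it is incomplete at the two places noted above; both can be repaired, the crucial one by the parity/rank argument rather than by the hard symmetric-block computation you anticipated.
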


See Figure \ref{FigSpectralMovie} for an illustration of this theorem, as well as the
interactive interface \cite{C24}. Theorem \ref{ThmDecouplingGr24}
comes close to being the first case where Question \ref{QExplicitDecoupling} is answered
explicitly, and suggests a qualitative distinction between bulk-deformations supported
on different algebraic cycles. We hope to explore this further in the future,
by looking at energy truncation levels $\alpha > 2$ as well as more intricate
coalescing patterns for Dubrovin eigenvalues of higher dimensional Grassmannians
(see \cite[Figure 1]{C20}).

\begin{figure}[H]
  \centering
   %add desired spacing between images, e. g. ~, \quad, \qquad, \hfill etc. 
    %(or a blank line to force the subfigure onto a new line)
    \begin{subfigure}[b]{0.2\textwidth}
        \includegraphics[width=\textwidth]{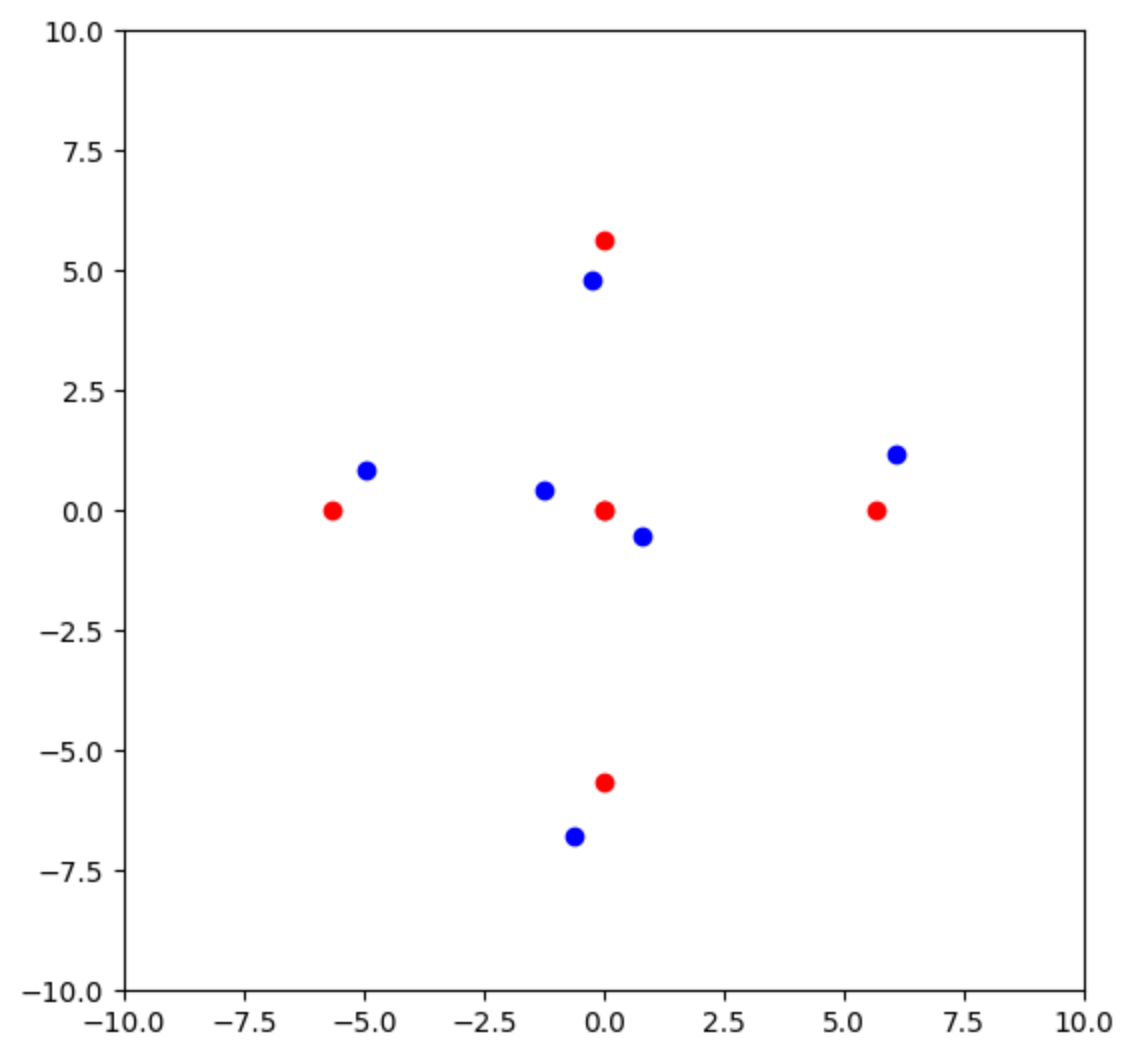}
        \caption{\tabular[t]{@{}l@{}}$X_d=X_{\ydiagram{2,0}}$ \\ $t=0.5+i$\endtabular}
    \end{subfigure}
    \begin{subfigure}[b]{0.2\textwidth}
        \includegraphics[width=\textwidth]{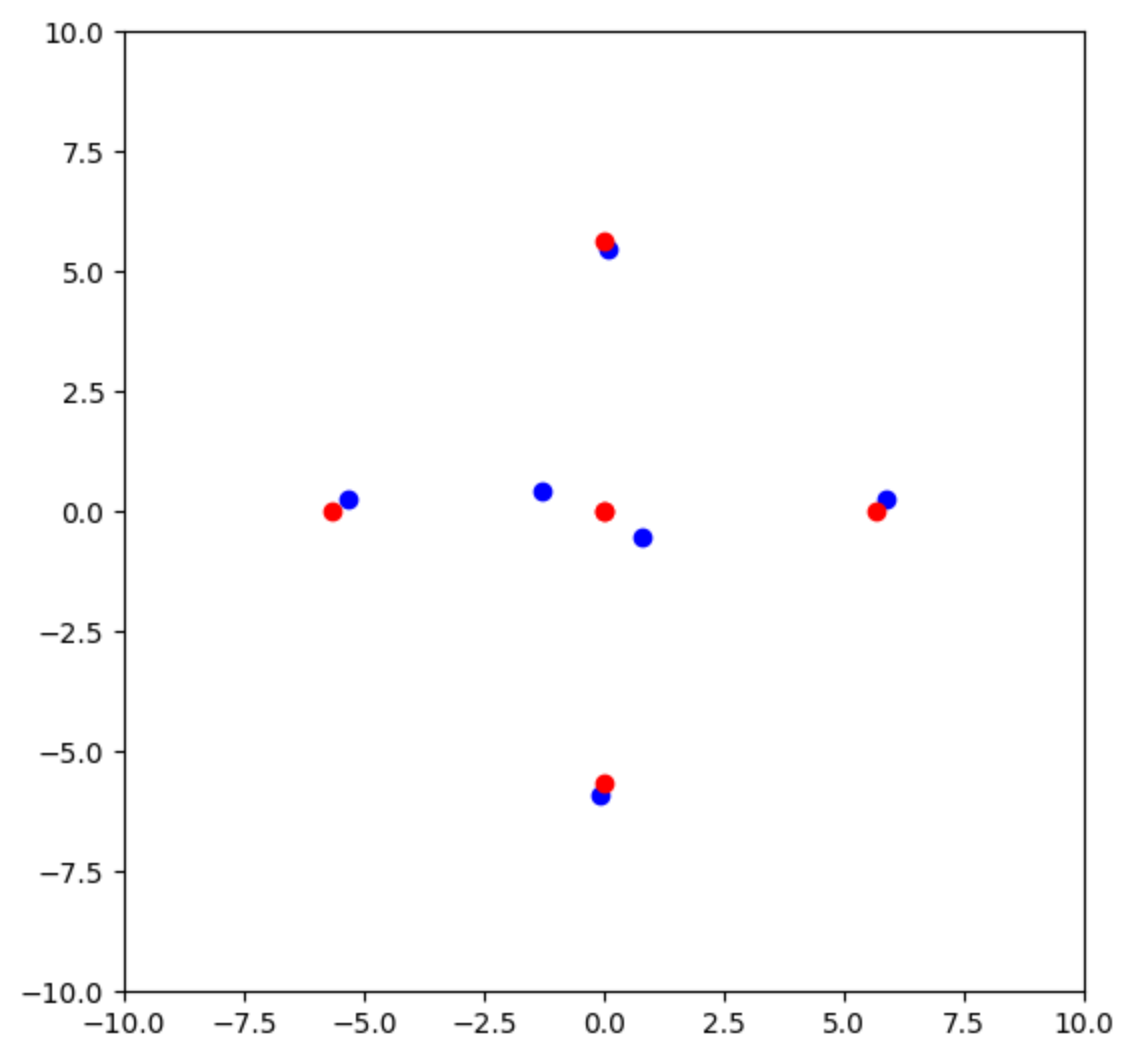}
        \caption{\tabular[t]{@{}l@{}}$X_d=X_{\ydiagram{1,1}}$ \\ $t=0.5+i$\endtabular}
    \end{subfigure}
     %add desired spacing between images, e. g. ~, \quad, \qquad, \hfill etc. 
    %(or a blank line to force the subfigure onto a new line)
    \begin{subfigure}[b]{0.2\textwidth}
        \includegraphics[width=\textwidth]{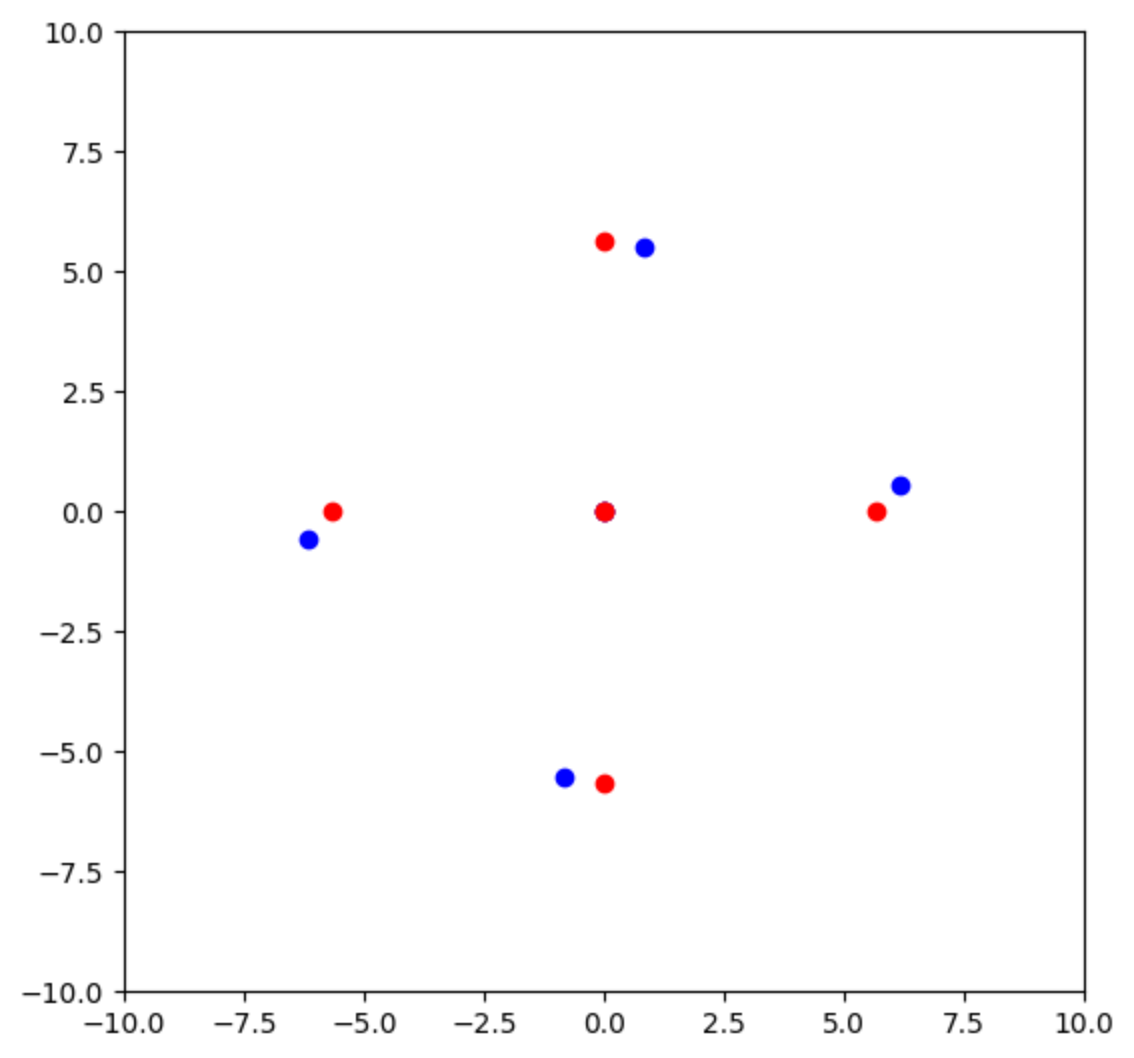}
        \caption{\tabular[t]{@{}l@{}}$X_d=X_{\ydiagram{2,1}}$ \\ $t=0.5+i$\endtabular}
    \end{subfigure}
    \begin{subfigure}[b]{0.2\textwidth}
        \includegraphics[width=\textwidth]{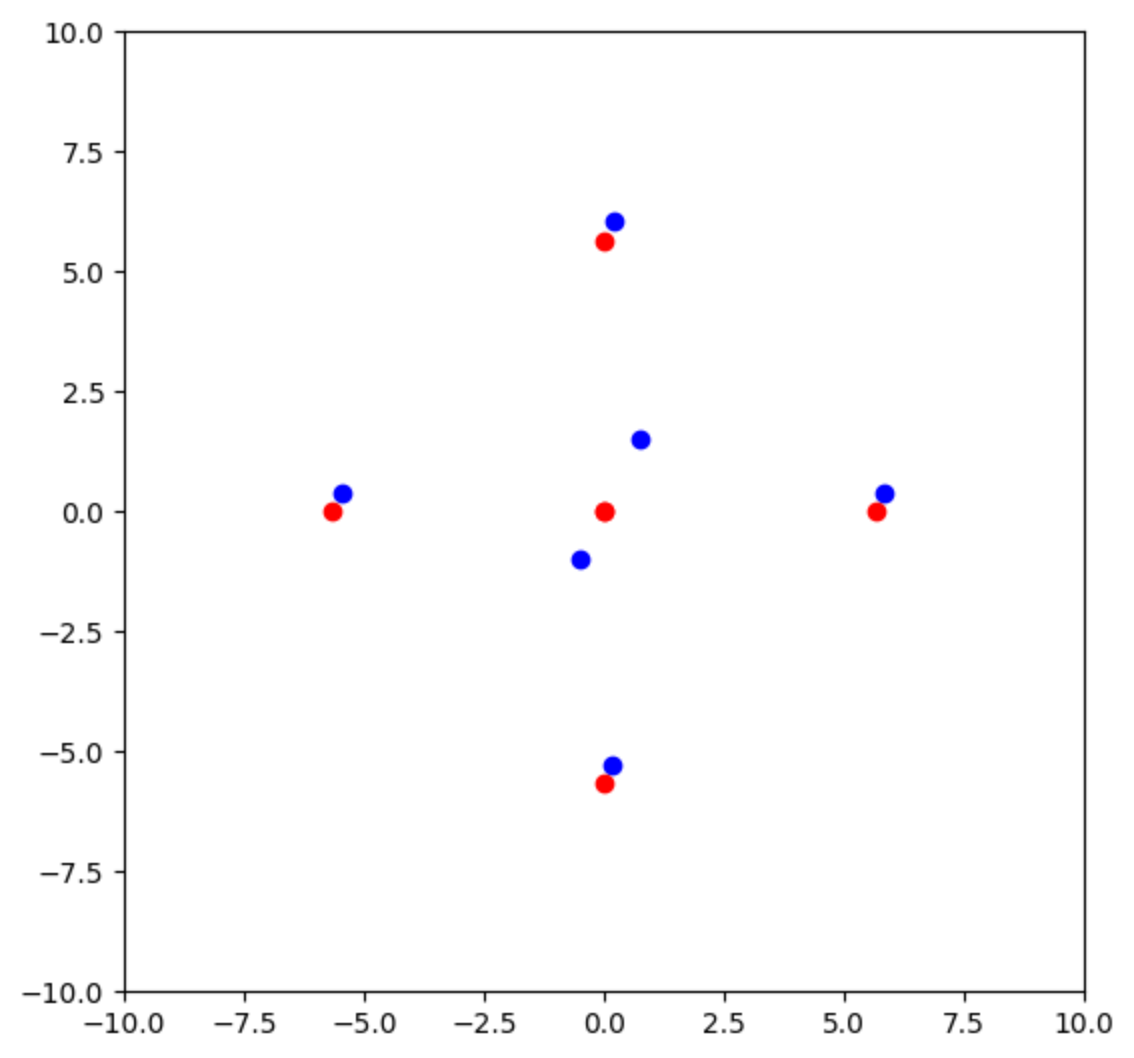}
        \caption{\tabular[t]{@{}l@{}}$X_d=X_{\ydiagram{2,2}}$ \\ $t=0.5+i$\endtabular}
    \end{subfigure}
    
     %add desired spacing between images, e. g. ~, \quad, \qquad, \hfill etc. 
      %(or a blank line to force the subfigure onto a new line)
    \begin{subfigure}[b]{0.2\textwidth}
        \includegraphics[width=\textwidth]{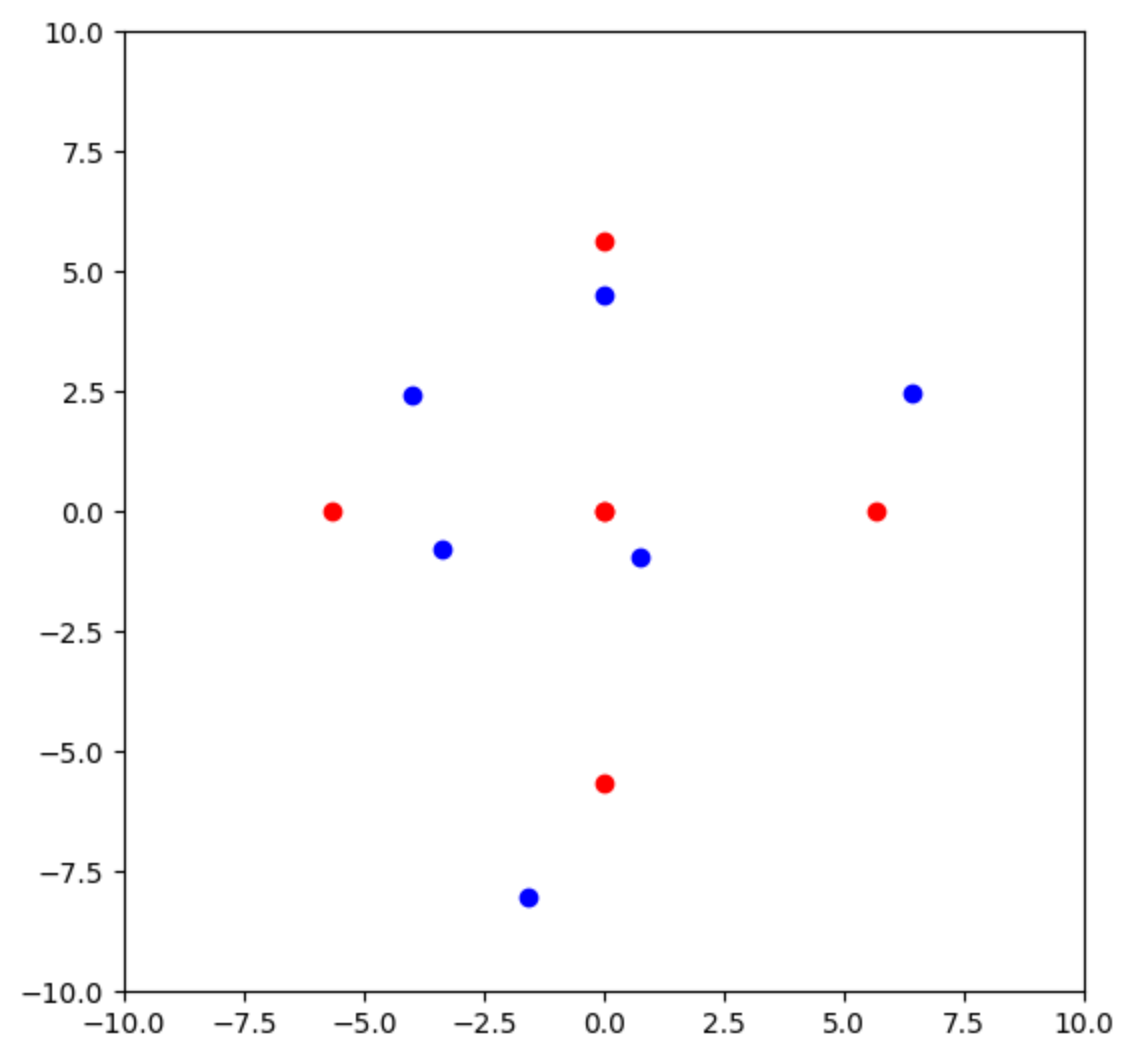}
        \caption{\tabular[t]{@{}l@{}}$X_d=X_{\ydiagram{2,0}}$ \\ $t=1+2i$\endtabular}
    \end{subfigure}
    %add desired spacing between images, e. g. ~, \quad, \qquad, \hfill etc. 
    %(or a blank line to force the subfigure onto a new line)
    \begin{subfigure}[b]{0.2\textwidth}
        \includegraphics[width=\textwidth]{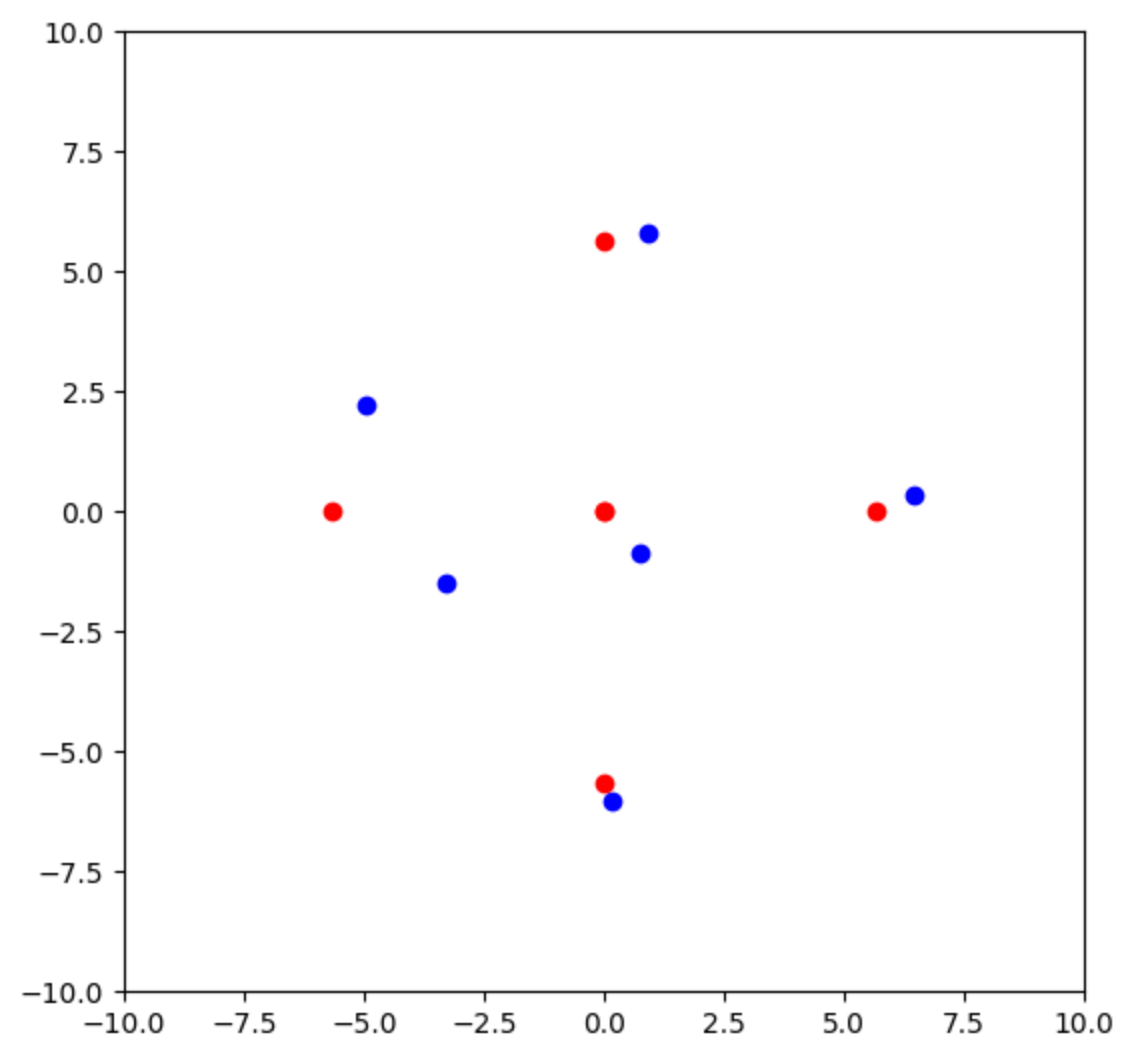}
        \caption{\tabular[t]{@{}l@{}}$X_d=X_{\ydiagram{1,1}}$ \\ $t=1+2i$\endtabular}
    \end{subfigure}
    %add desired spacing between images, e. g. ~, \quad, \qquad, \hfill etc. 
    %(or a blank line to force the subfigure onto a new line)
    \begin{subfigure}[b]{0.2\textwidth}
        \includegraphics[width=\textwidth]{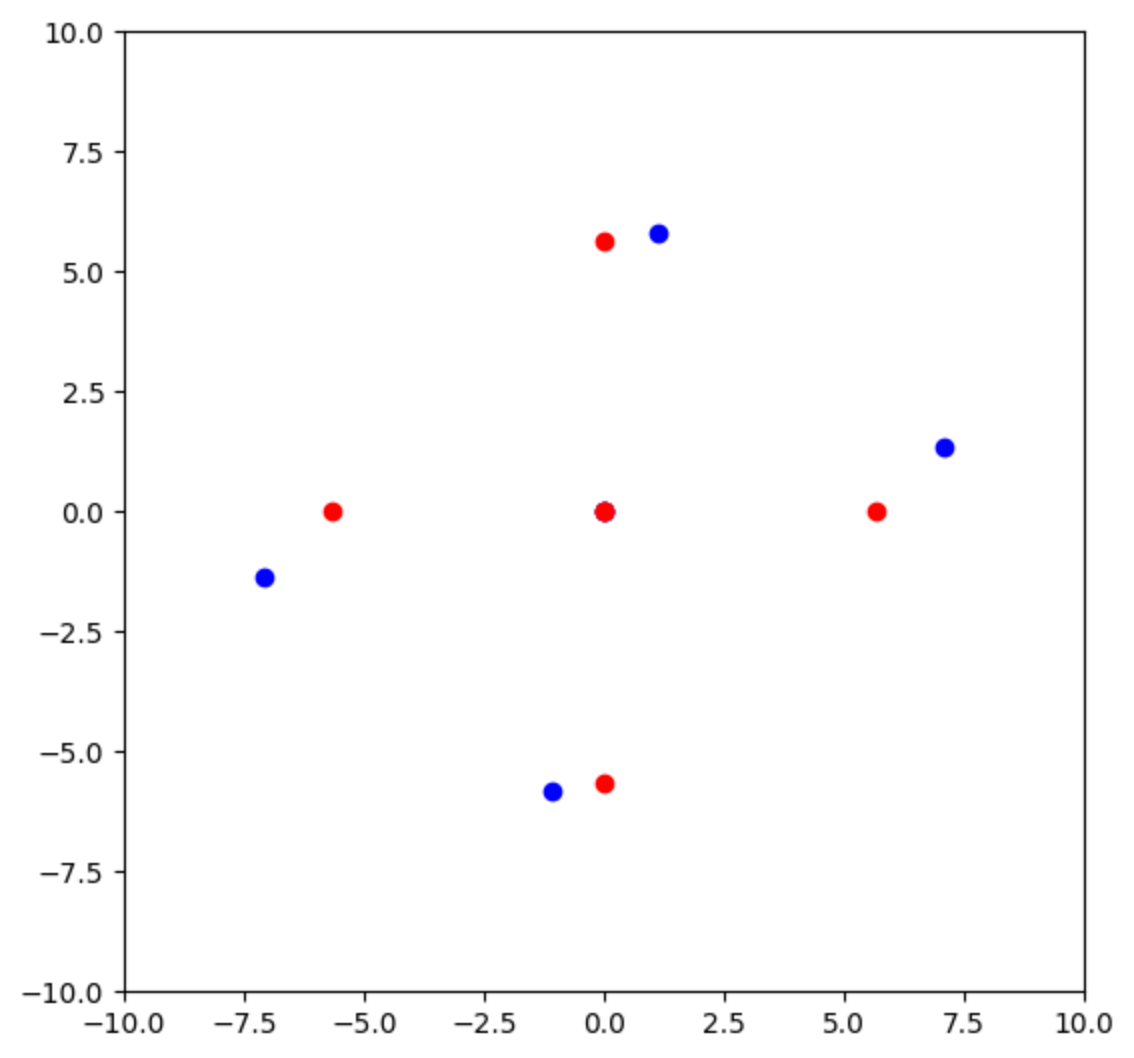}
        \caption{\tabular[t]{@{}l@{}}$X_d=X_{\ydiagram{2,1}}$ \\ $t=1+2i$\endtabular}
    \end{subfigure}
      \begin{subfigure}[b]{0.2\textwidth}
        \includegraphics[width=\textwidth]{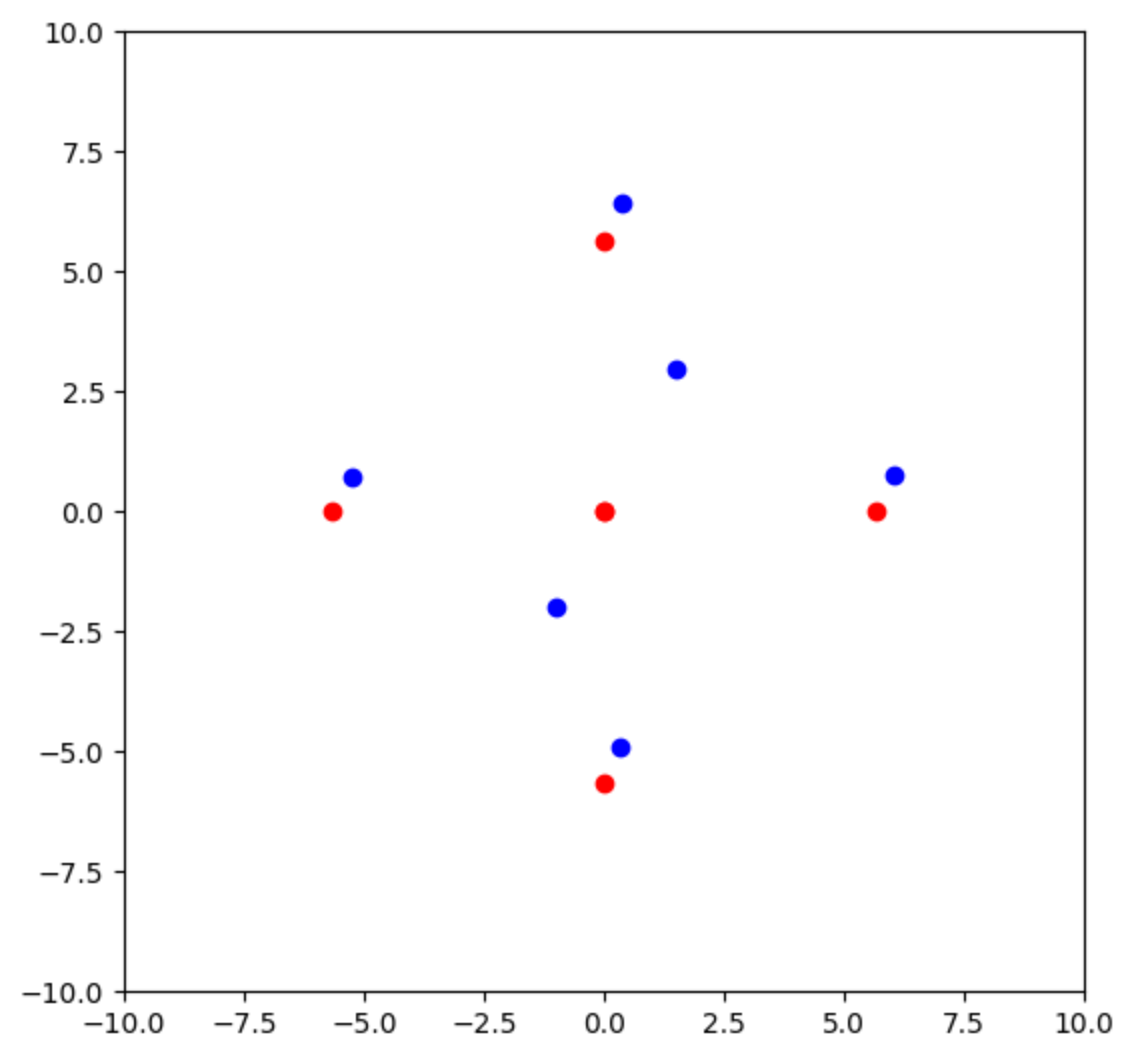}
        \caption{\tabular[t]{@{}l@{}}$X_d=X_{\ydiagram{2,1}}$ \\ $t=1+2i$\endtabular}
    \end{subfigure}
    
        %add desired spacing between images, e. g. ~, \quad, \qquad, \hfill etc. 
      %(or a blank line to force the subfigure onto a new line)
    \begin{subfigure}[b]{0.2\textwidth}
        \includegraphics[width=\textwidth]{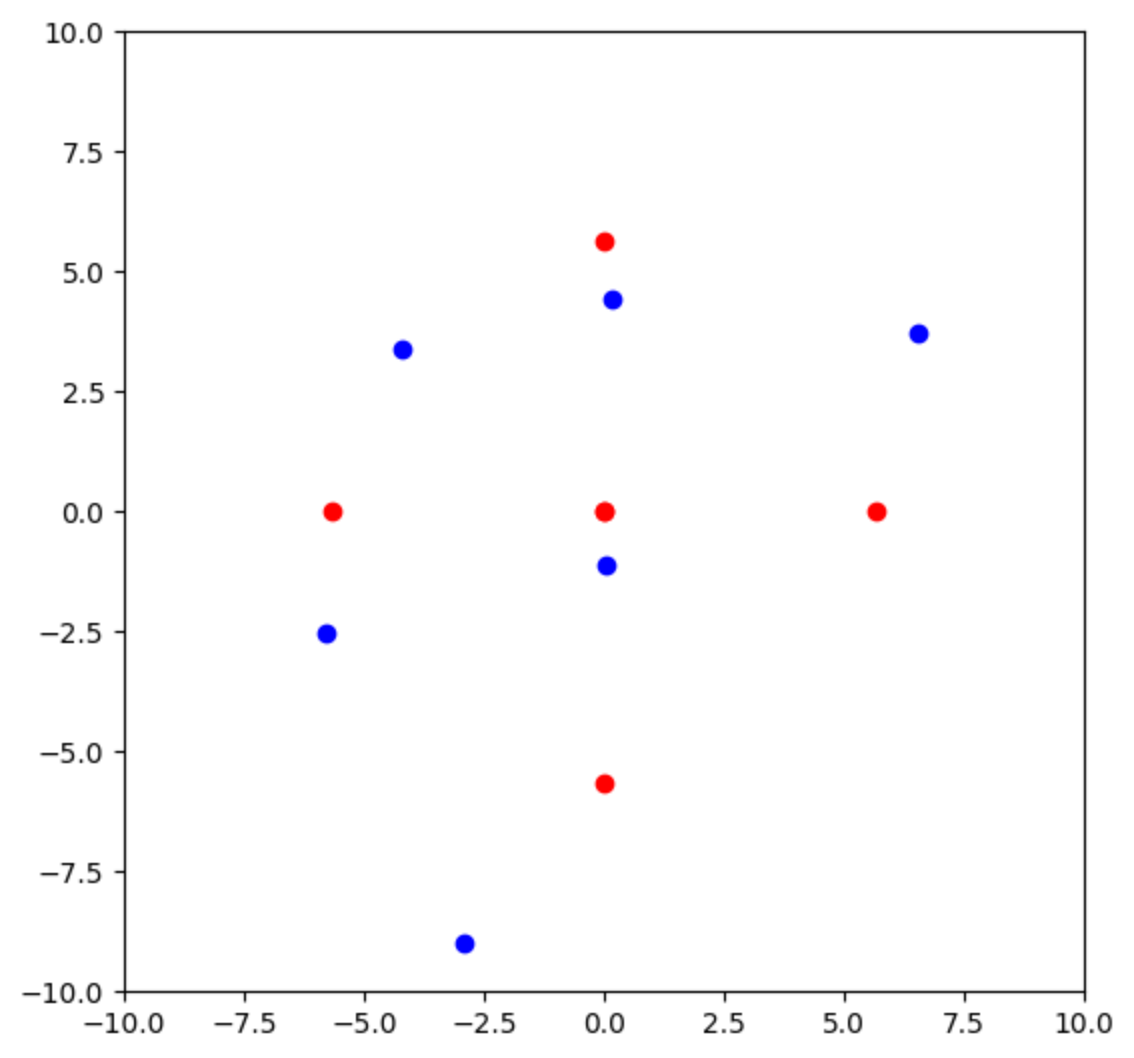}
        \caption{\tabular[t]{@{}l@{}}$X_d=X_{\ydiagram{2,0}}$ \\ $t=1.5+3i$\endtabular}
    \end{subfigure}
    %add desired spacing between images, e. g. ~, \quad, \qquad, \hfill etc. 
    %(or a blank line to force the subfigure onto a new line)
    \begin{subfigure}[b]{0.2\textwidth}
        \includegraphics[width=\textwidth]{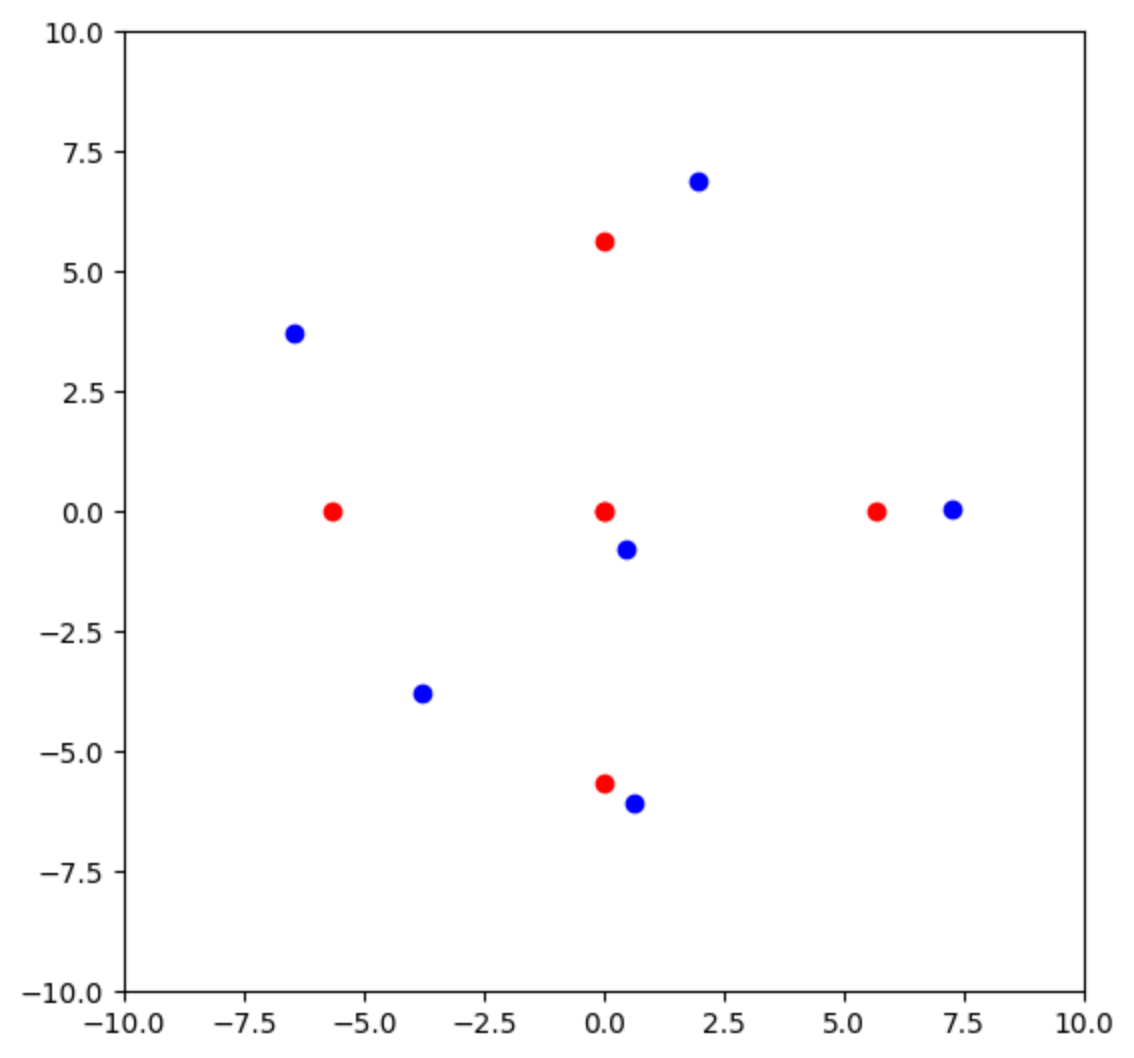}
        \caption{\tabular[t]{@{}l@{}}$X_d=X_{\ydiagram{1,1}}$ \\ $t=1.5+3i$\endtabular}
    \end{subfigure}
    %add desired spacing between images, e. g. ~, \quad, \qquad, \hfill etc. 
    %(or a blank line to force the subfigure onto a new line)
    \begin{subfigure}[b]{0.2\textwidth}
        \includegraphics[width=\textwidth]{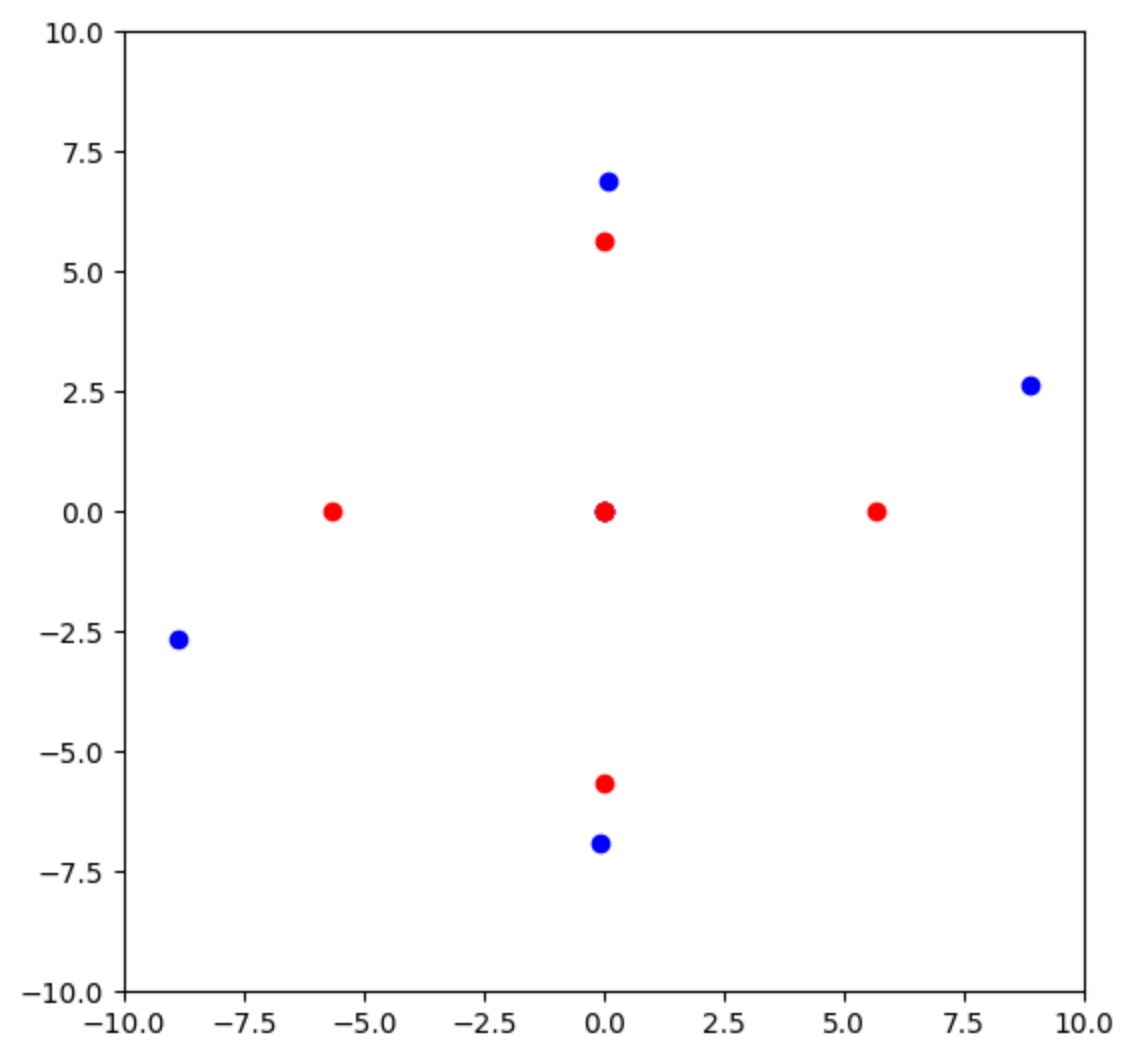}
        \caption{\tabular[t]{@{}l@{}}$X_d=X_{\ydiagram{2,1}}$ \\ $t=1.5+3i$\endtabular}
    \end{subfigure}
      \begin{subfigure}[b]{0.2\textwidth}
        \includegraphics[width=\textwidth]{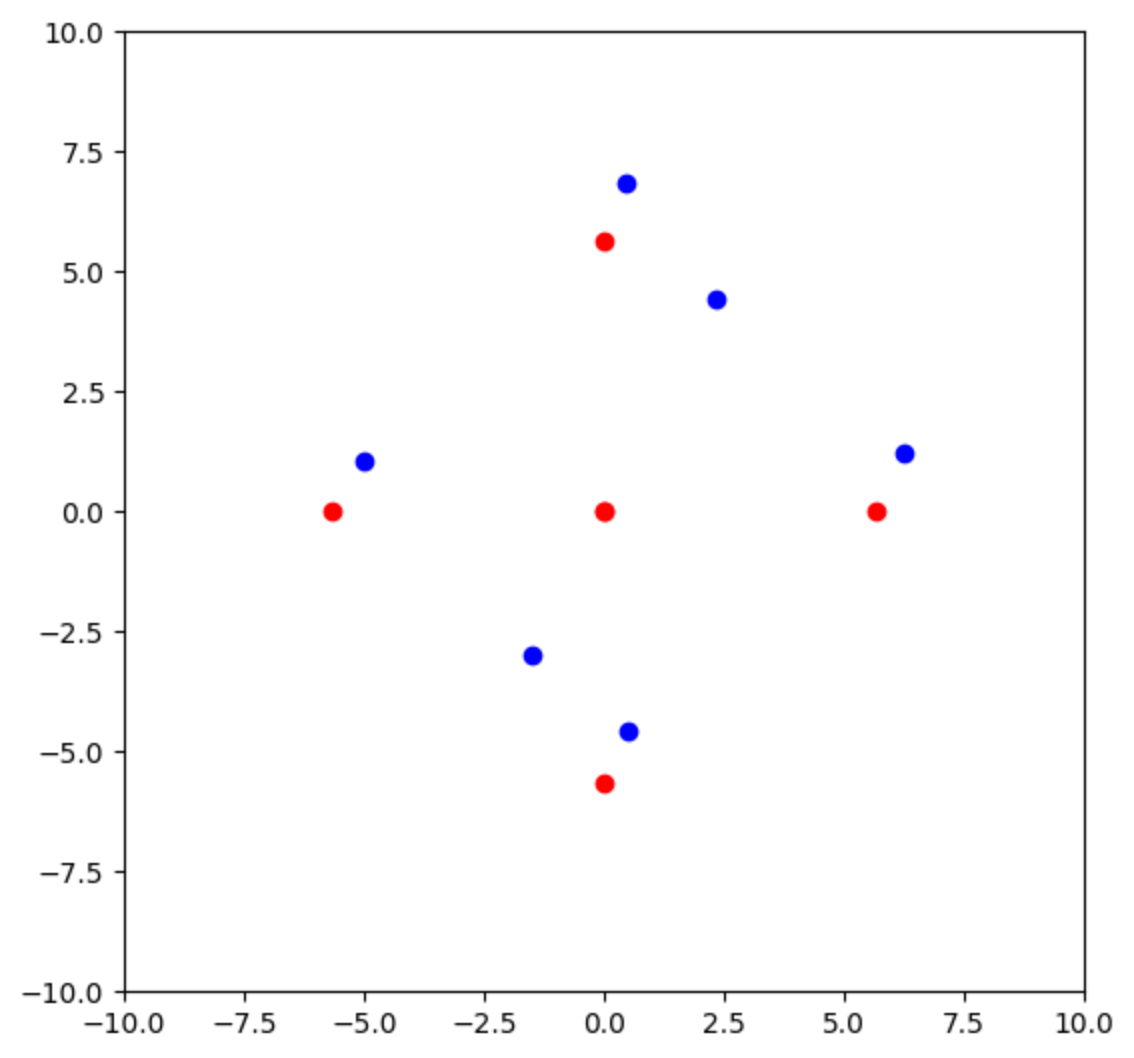}
        \caption{\tabular[t]{@{}l@{}}$X_d=X_{\ydiagram{2,2}}$ \\ $t=1.5+3i$\endtabular}
    \end{subfigure}

    \caption{Spectrum of the $\alpha=2$ truncation of $K^B$ with $B=0$ (red) and $B=t[X_d]$
    (blue) for different Schubert cycles $X_d\subset\Gr(2,4)$ and $t\in\CC^\times$, after
    specializing to $q=1$. The zero eigenvalue has algebraic multiplicity two, and it
    decouples whenever $X_d\neq X_{\ydiagram{2,1}}$.}
    \label{FigSpectralMovie}
\end{figure}

\subsection{Relation with closed-open maps}

By analogy with the case $B=0$, one can expect the existence of algebra maps
$\CO^B_\lambda: \QH^B(X)\to \HH(\cF^B_\lambda(X))$
obtained by bulk-deformation from ordinary closed-open maps, and whose length $0$ part
$(\CO^B_\lambda)^0$ is related to our BMC-deformed $q$-cap operation by
$m^2_{B,D}((\CO^B_\lambda)^0(-),-) = -\cap_{B,D}-$. In this language, our Theorem
\ref{ThmCurvatureInSpectrum} should follow after verifying that: (1) $(\CO^B_\lambda)^0:
\QH^B(X)\to\HF^B(L,D)$ is a unital algebra map; (2) $(\CO^B_\lambda)^0(K^B)=\lambda [L]$,
where $\lambda=W^B(D)$ is as in Corollary \ref{CorBMCDiskPotential}.
See Sheridan \cite[Corollary 2.10]{Sh16} for an alternative proof (in the monotone setting)
of Auroux's result \cite[Proposition 6.8]{Au07} using this approach.
It is our understanding that Hugtenberg \cite{Hu} carries out the construction of length
$0$ bulk-deformed closed-open maps in the de Rham model for Floer cochains, under the same
assumptions used by Solomon-Tukachinsky \cite{ST}. Also, it is our understanding that
Venugopalan-Woodward-Xu \cite{VWX} carry out the construction of full length bulk-deformed
closed-open maps in the pearly model for Floer cochains, under rationality assumptions
that allow the use of stabilizing divisors.
\\\\
\textbf{Acknowledgements} I thank Mohammed Abouzaid for advertising bulk-deformations
to me in 2019. Denis Auroux provided crucial feedback at various points in the
project, especially suggesting initial versions of the forgetful axioms (5)-(6). I
thank Kenji Fukaya for useful comments regarding Section \ref{SecAxioms}. Finally,
I thank Kai Hugtenberg and Chris Woodward for explaining their work on bulk-deformed
closed-open maps.

\section{Axiomatic framework}\label{SecAxioms}

This section introduces some moduli spaces of $J$-holomorphic curves used later
in the article to define various types of operations. We assume that such moduli spaces
satisfy a set of simplified axioms, and discuss the heuristic behind their choice.
Stating and proving virtual analogues of these axioms is an important and interesting
problem, particularly open for the forgetful axioms (5)-(6). See the monograph by
Fukaya-Oh-Ohta-Ono \cite{FOOO20} for the most recent approach to this kind of questions,
based on the notion of Kuranishi structure.

\subsection{Pull-push}

Suppose that $M_\otimes$ and $M_\odot$ are closed
manifolds of dimensions $d_\otimes$ and $d_\odot$, and let $\cM$ be a manifold with corners of
dimension $d$, endowed with maps $\pi_\otimes: M_\otimes \leftarrow \cM$ and $\pi_\odot : \cM \rightarrow M_\odot$.

\begin{definition}\label{DefCorrespondence}
If $\pi_\otimes$ is a submersion, call $\kappa = (\pi_\otimes, \pi_\odot)$ a correspondence
from $M_\otimes$ to $M_\odot$, also denoted $\kappa : M_\otimes\rightsquigarrow M_\odot$.
\end{definition}

Following Fukaya \cite{F}, a correspondence $\kappa:M_\otimes\rightsquigarrow M_\odot$ induces a
map $\kappa_{\#}:C_\bullet (M_\otimes)\to C_{\bullet+d-d_\otimes}(M_\odot)$, sometimes
called pull-push, via the following construction.
A smooth singular chain on $M_\otimes$ is a linear combination of smooth maps $\sigma:\Delta^\bullet\to M_\otimes$
from the $\bullet$-dimensional simplex. In a well-behaved theory of manifolds with corners (see e.g. Joyce \cite{J}), the
assumption that $\pi_\otimes$ is a submersion guarantees the existence of fiber products
$\Delta^\bullet \prescript{}{\sigma}\times_{\pi_\otimes} M_\otimes$ which are manifolds with
corners of dimension $\bullet+d-d_\otimes$. After choosing a triangulation $\sigma_i$
for each of them, the fiber product gives maps $\sigma_i : \Delta^{\bullet+d-d_\otimes} \to \cM$.
One can define $\kappa_{\#}(\sigma)=\sum_i\pi_\odot\circ\sigma_i$ and extend by linearity
to any smooth singular chain.

\subsection{Correspondence axioms}

The following axioms (1)-(4) guarantee that various
moduli spaces of $J$-holomorphic curves used to construct chain-level operations in
the article do give rise to correspondences in the sense of Definition \ref{DefCorrespondence}.
Fix an almost complex structure $J$ compatible with the symplectic structure
of $X$, and suppress it from the notation.
Denote $\cM_l(X,\beta)$ the moduli space of stable $J$-holomorphic spheres in $X$, with $l$ marked points
and class $\beta\in\pi_2(X)$. For each marked point one has a corresponding evaluation
map $\cM_l(X,\beta) \to X$.

\vspace{0.5em}
\fbox{\begin{minipage}{0.9\textwidth}
\textbf{(1) Spheres:}  $\cM_l(X,\beta)$ is a compact manifold of dimension
$2n+2c_1(\beta)+2l-6$, and the evaluation maps are submersions.
\end{minipage}}
\vspace{0.5em}

Denote $\cM_{k,l}(L,\beta)$ the moduli space of stable $J$-holomorphic disks in $X$ with boundary in $L$,
with $k$ boundary marked points and $l$ interior marked points, of class
$\beta\in\pi_2(X,L)$. For each interior marked point one has a corresponding evaluation
map $\cM_{k,l}(L,\beta) \to X$, and similarly for each boundary marked point one has
a corresponding evaluation map $\cM_{k,l}(L,\beta) \to L$.

\vspace{0.5em}
\fbox{\begin{minipage}{0.9\textwidth}
\textbf{(2) Disks:}  $\cM_{k,l}(L,\beta)$ is a compact manifold with corners of dimension
$n+\mu(\beta)+k+2l-3$, and the evaluation maps are submersions.
\end{minipage}}
\vspace{0.5em}

Denote $\cG'_{2+k,1+l}(L,\beta)$ the moduli space of stable $J$-holomorphic
disks in $X$ with boundary in $L$, with $k$ boundary marked points and $l$ interior
marked points, of class $\beta\in\pi_2(X,L)$, with one extra interior and one extra
boundary marked point, both constrained to be on a hyperbolic geodesic from the first
boundary marked point. For each interior marked point one has a corresponding evaluation
map $\cG'_{2+k,1+l}(L,\beta) \to X$, and similarly for each boundary marked point one has
a corresponding evaluation map $\cG'_{2+k,1+l}(L,\beta) \to L$.

\vspace{0.5em}
\fbox{\begin{minipage}{0.9\textwidth}
\textbf{(3) Geodesic$'$:} $\cG'_{2+k,1+l}(L,\beta)$ is a compact manifold with corners of
dimension $n+\mu(\beta)+k+2l$, and the evaluation maps are submersions.
\end{minipage}}
\vspace{0.5em}

Denote $\cG''_{2+k,2+l}(L,\beta)$ the moduli space of stable $J$-holomorphic disks in $X$ with boundary in
$L$, with $k$ boundary marked points and $l$ interior marked points, of class
$\beta\in\pi_2(X,L)$, with two extra interior and one extra boundary marked point, all
constrained to be on a hyperbolic geodesic from the first boundary marked point.
For each interior marked point one has a corresponding evaluation
map $\cG''_{2+k,2+l}(L,\beta) \to X$, and similarly for each boundary marked point one has
a corresponding evaluation map $\cG''_{2+k,2+l}(L,\beta) \to L$.

\vspace{0.5em}
\fbox{\begin{minipage}{0.9\textwidth}
\textbf{(4) Geodesic$''$:} $\cG''_{2+k,2+l}(L,\beta)$ is a compact manifold with corners of
dimension $n+\mu(\beta)+k+2l+1$,
and the evaluation maps are submersions.
\end{minipage}}
\vspace{0.5em}

\begin{notation}
Throughout the paper, correspondences $\kappa : M_\otimes\rightsquigarrow M_\odot$
induced by moduli spaces $\cM$ of stable $J$-holomorphic spheres/disks will be described
by designating some marked points as inputs, denoted $\otimes$, and one marked point
as output, denoted $\odot$. The corresponding evaluations then specify maps
$\pi_\otimes: M_\otimes \leftarrow \cM$ and $\pi_\odot : \cM \rightarrow M_\odot$
to the corresponding products of $L$ and $X$.
\end{notation}

\subsection{Forgetful axioms}

The correspondence axioms listed so far can be grouped in two classes: (1)-(2) involve
no geodesic constraint, while (3)-(4) do. The results of Section \ref{SecSpectrum}
rely on two additional axioms, which relate chains obtained from moduli of disks with
and without geodesic constraints as in (3) and (2) respectively.
Both axioms describe what happens when one forgets the constraint of having the input
boundary marked point on the hyperbolic geodesic mapping to $L\subset X$.
Since disks already have boundary on $L$, this is a vacuous constraint and one expects to
sweep out the same chain by evaluation from a different moduli space where
the constraint is removed. The result of this operation depends on what kind of cycle
the remaining interior marked point on the hyperbolic geodesic was constrained to map to.

\vspace{0.5em}
\fbox{\begin{minipage}{0.9\textwidth}
\textbf{(5) Forget$'$:} If $L\subset X\setminus Z$ with $|Z|^\vee=2$ and $[Z]^\vee = c_1$ then
$$\sum_{k_0+k_1=k}(Z\cap_{B,D}L)_\beta^{k_0,k_1;l}=\frac{\mu(\beta)}{2}q^\beta_{k,l}(D^k,B^l) \quad .$$
\end{minipage}}
\vspace{0.5em}

\vspace{0.5em}
\fbox{\begin{minipage}{0.9\textwidth} 
\textbf{(6) Forget$''$:} If $B=\sum_jB_j$ with $|B_j|^\vee=j\neq 2$ then
$$\sum_{k_0+k_1=k}(B_j\cap_{B,D}L)_\beta^{k_0,k_1;l}=\frac{1}{l+1}\sum_{t=0}^lq^\beta_{k,l+1}(D^k,B^t,B_j,B^{l-t}) \quad .$$
\end{minipage}}
\vspace{0.5em}

\section{Bulk and Maurer-Cartan deformations}\label{SecBMC}

This section reviews the definition of bulk-deformed quantum cup product $\star_B$, as
well as of the coupled bulk/Maurer-Cartan deformation $m^d_{B,D}$ of the $A_\infty$ algebra
of a Lagrangian $L\subset X$. These constructions rely on the correspondence axioms (1)-(2)
from Section \ref{SecAxioms}.

\ytableausetup{boxsize=1em}
\begin{figure}[H]
  \centering
   %add desired spacing between images, e. g. ~, \quad, \qquad, \hfill etc. 
    %(or a blank line to force the subfigure onto a new line)
    \begin{subfigure}[t]{0.4\textwidth}
        \includegraphics[width=\textwidth]{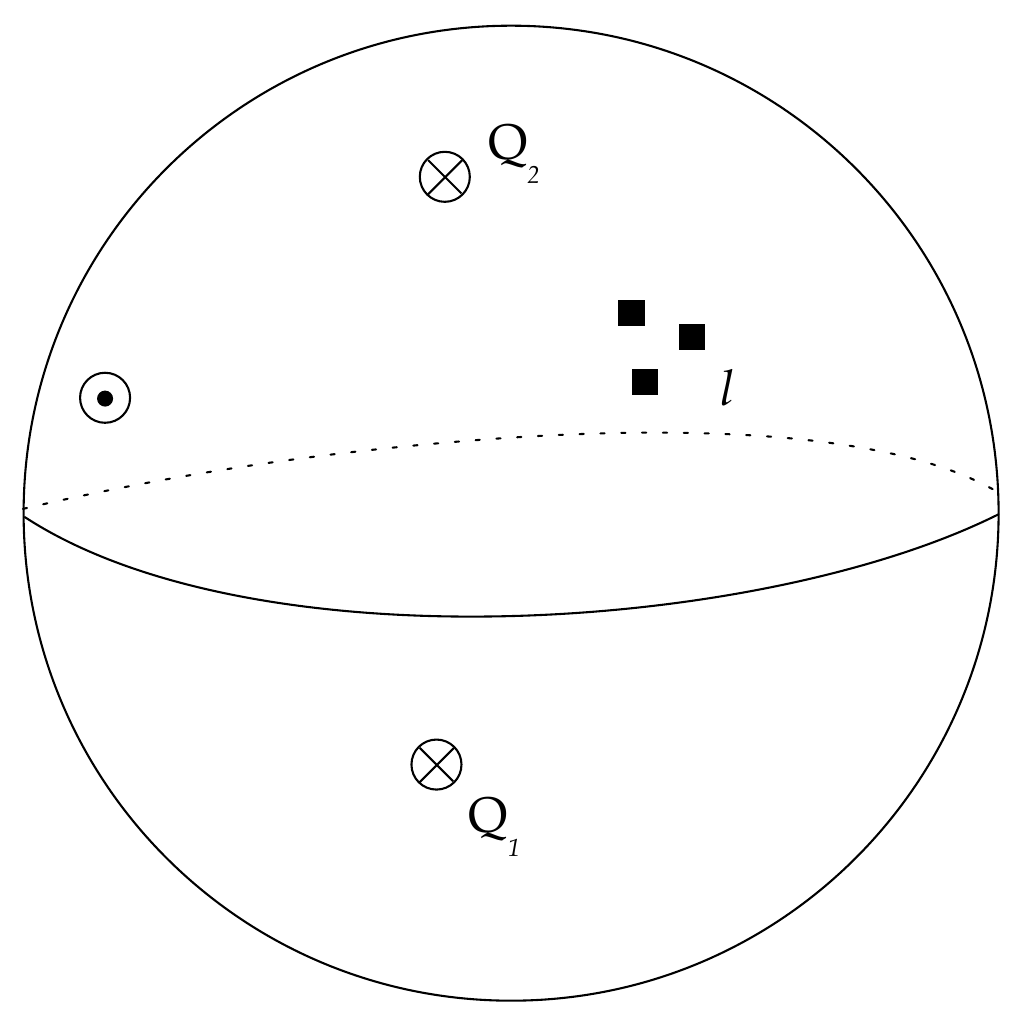}
        \caption{$\kappa:X^2\rightsquigarrow X$ induced by $\cM_{3+l}(X,\beta)$ for $l\geq 0$}
    \end{subfigure}
    \begin{subfigure}[t]{0.4\textwidth}
        \includegraphics[width=\textwidth]{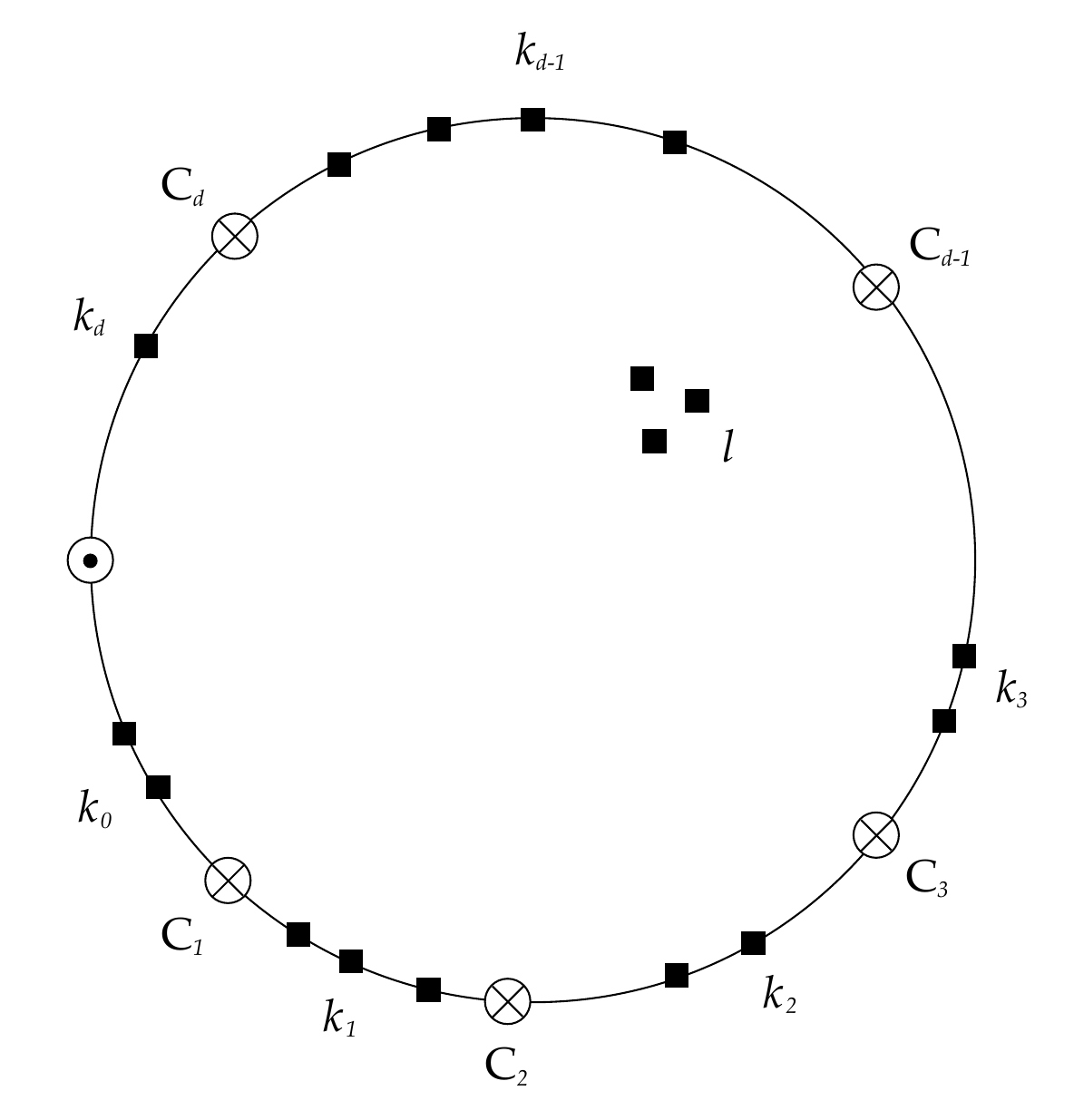}
        \caption{$\kappa:L^d\rightsquigarrow L$ induced by $\cM_{d+k_0+\ldots +k_d,l}(L,\beta)$
        for $k_0,\ldots ,k_d\geq 0$ and $l\geq 0$}
    \end{subfigure}
    \caption{Some correspondences induced by moduli of stable $J$-holomorphic spheres/disks. The
    interior/boundary marked points denoted $\blacksquare$ are constrained to map to $B/D$.}
    \label{FigBasicCorrespondences}
\end{figure}

\subsection{B-deformations}

Denote $C(X)$ the module of normalized smooth singular chains on $X$ with $\CC$ coefficients.
Throughout the paper, fix a cycle $B\in C(X)$ concentrated in even degrees and call it
bulk deformation parameter. Denote $B=\sum_jB_j$ its decomposition into homogeneous
components, indexed by their codegrees $j=2n-|B_j|$.

\begin{definition}\label{DefBCup}
The B-deformed cup product on $C(X)\otimes\Lambda$ is
$$
Q_1\star_B Q_2 = \sum_{l\geq 0}\sum_{\beta\in\pi_2(X)}q^{\omega(\beta)}
\frac{1}{l!}(Q_1\star_B Q_2)_\beta^l \quad ,$$
where the chain $(Q_1\star_B Q_2)_\beta^l=\kappa_{\#}(Q_1\times Q_2\times B^l)$
is obtained via pull-push from the correspondence $\kappa$ in Figure \ref{FigBasicCorrespondences}(A).
\end{definition}

\begin{proposition}(see e.g. McDuff-Salamon \cite{MS})
The B-deformed cup product descends to homology, and makes the corresponding
$\Lambda$-module an algebra with unit $[X]$ denoted $\QH^B(X)$.
\end{proposition}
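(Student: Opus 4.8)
The plan is to prove the three assertions packaged into the statement: that $\star_B$ satisfies a graded Leibniz rule (so that it descends to homology), that $[X]$ is a two-sided unit, and that $\star_B$ is associative. All three are checked at the chain level using Fukaya's boundary formula for pull-push together with the correspondence Axiom~(1), which guarantees that $\cM_{3+l}(X,\beta)$ is a \emph{closed} manifold.

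First I would establish the Leibniz identity. Fix $\beta$ and $l$ and recall from Definition~\ref{DefBCup} that $(Q_1\star_B Q_2)_\beta^l=\kappa_{\#}(Q_1\times Q_2\times B^l)$. Fukaya's boundary formula reads $\partial\kappa_{\#}(\sigma)=\kappa_{\#}(\partial\sigma)\pm(\partial\kappa)_{\#}(\sigma)$, where $\partial\kappa$ is the correspondence induced by $\partial\cM$. Since Axiom~(1) makes $\cM_{3+l}(X,\beta)$ a closed manifold, $\partial\cM=\emptyset$ and the second term disappears. Applying the product rule to $\partial(Q_1\times Q_2\times B^l)$ and using that $B$ is a cycle, so that $\partial B=0$ kills every $B^l$ contribution, gives $\partial(Q_1\star_B Q_2)_\beta^l=(\partial Q_1\star_B Q_2)_\beta^l\pm(Q_1\star_B\partial Q_2)_\beta^l$. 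Summing over $\beta$ and $l$ with the Novikov weights $q^{\omega(\beta)}/l!$ yields the graded Leibniz rule for $\star_B$, from which descent is formal: products of cycles are cycles, and replacing a cycle by a homologous one alters the product by an explicit boundary.

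Next, for the unit I would show $[X]\star_B Q=Q$ in homology. The $(\beta,l)=(0,0)$ summand is pull-push from the moduli of constant spheres and reproduces the classical cup product, for which the fundamental cycle $[X]$ is the identity. In every remaining summand one input is constrained to $[X]$, which is a vacuous constraint, so that marked point is free to move on the domain and sweeps out a positive-dimensional family over each image point; the evaluation chain is therefore degenerate. This is precisely the fundamental-class axiom of (bulk-deformed) Gromov--Witten theory, and it forces all $(\beta,l)\neq(0,0)$ contributions to vanish in homology.

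The associativity is the main obstacle and is the WDVV relation. My plan is to introduce the moduli space $\cM_{4+l}(X,\beta)$ carrying four distinguished marked points --- three inputs and one output --- together with $l$ bulk points, and the stabilization map $\cM_{4+l}(X,\beta)\to\overline{\cM}_{0,4}\cong\PP^1$ remembering only the four distinguished points. The two bracketings $(Q_1\star_B Q_2)\star_B Q_3$ and $Q_1\star_B(Q_2\star_B Q_3)$ are realized as pull-push over the two boundary points of $\overline{\cM}_{0,4}$, where the domain degenerates into a two-component nodal curve splitting the inputs as $\{1,2\}\mid\{3\}$ and $\{1\}\mid\{2,3\}$. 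Since these two points are homologous in $\PP^1$, their preimages cobound the preimage of a $1$-chain joining them, so the two pull-push chains differ by a boundary and agree in homology. The delicate point, and the one I expect to absorb most of the work, is identifying each degenerate fiber with the fiber product of two sphere moduli $\cM_{3+l_1}(X,\beta_1)$ and $\cM_{3+l_2}(X,\beta_2)$ glued along the diagonal at the node --- summed over $\beta_1+\beta_2=\beta$ and all distributions $l_1+l_2=l$ of the bulk points --- and checking that this diagonal gluing matches the pull-push composition defining the iterated product, with the $1/l!$ combinatorial factors and signs accounted for. In the present simplified framework this is a structural statement about how the closed sphere moduli degenerate; for the ordinary Gromov compactification it is classical, which is why the proposition may be cited from McDuff--Salamon \cite{MS}.
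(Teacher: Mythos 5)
The paper offers no internal proof of this proposition: it is quoted wholesale from McDuff--Salamon \cite{MS}, so there is nothing in the text to compare your argument against line by line. Your sketch is, in outline, the standard proof that the citation points to, and its three pieces are sound. The Leibniz rule does follow from Axiom~(1) exactly as you say, because that axiom makes $\cM_{3+l}(X,\beta)$ a \emph{closed} manifold (no corners, in contrast with the disk moduli of Axiom~(2)), so the boundary of a fiber product $\Delta^\bullet\times_{\pi_\otimes}\cM$ comes only from $\partial\Delta^\bullet$, and $\partial B=0$ kills the bulk terms. Your unit argument --- vacuous constraint, forget the marked point, conclude degeneracy by a dimension count --- is precisely the argument the paper itself uses later for the cap product in part~(1) of Proposition~\ref{PropModuleStructure}, where $[X]\cap_{B,D}[C]=[C]$ is proved by removing the vacuous constraint and invoking Lemma~\ref{LemmaBMCCapDegree} to see the quantum terms are degenerate; so that step is fully consistent with the paper's framework.

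The one caveat worth flagging is the associativity step, and you have correctly identified where the work lives. Within the paper's axiomatic framework the WDVV argument is not actually available: Axioms~(1)--(6) of Section~\ref{SecAxioms} say nothing about how sphere moduli degenerate, i.e.\ there is no axiom identifying the fiber of the stabilization map $\cM_{4+l}(X,\beta)\to\overline{\cM}_{0,4}$ over a boundary point with the diagonal fiber product of $\cM_{3+l_1}(X,\beta_1)$ and $\cM_{3+l_2}(X,\beta_2)$ summed over splittings, which is the heart of your third paragraph. (The paper states degeneration behavior only for disks, and even there only inside proofs, e.g.\ Proposition~\ref{PropBMCCapOnHomology}.) This is presumably exactly why the author outsources the whole proposition to \cite{MS} rather than deriving it from the axioms: the gluing analysis for the Gromov compactification is classical but lies outside the simplified framework. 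Your closing sentence acknowledges this, so the proposal is correct as a reconstruction of the cited proof, with the honest admission that the delicate fiber-product identification is assumed rather than derived.
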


\subsection{BMC-deformations}

Denote $C(L)$ the module of normalized smooth singular chains on $L$ with $\CC$ coefficients.
Throughout the paper, fix a chain $D\in C(L)\otimes \Lambda_{>0}$ concentrated in odd degrees and call it
Maurer-Cartan deformation parameter. Denote $D=\sum_iD_i$ its decomposition into homogeneous
components, indexed by their codegrees $i=n-|D_i|$.

\begin{definition}\label{DefBMCAinftyMaps}
For $d\geq 0$ the BMC-deformed $A_\infty$ products on $C(L)\otimes\Lambda$ are
$$m^d_{B,D}(C_1,\ldots ,C_d)=
\sum_{k_0,\ldots ,k_d\geq 0}\sum_{l\geq 0}\sum_{\beta\in\pi_2(X,L)}q^{\omega(\beta)}
\frac{1}{l!}q^\beta_{d+k_0+\ldots +k_d,l}(D^{k_0},C_1,D^{k_1},\ldots ,C_d,D^{k_d},B^l) \quad .$$
When $\beta\neq 0$ the chain $q^\beta_{d+k_0+\ldots +k_d,l}(D^{k_0},C_1,D^{k_1},\ldots ,C_d,D^{k_d},B^l)=
\kappa_{\#}(D^{k_0}\times C_1\times D^{k_1}\times \cdots \times C_d\times D^{k_d}\times B^l)$
is obtained via pull-push from the correspondence $\kappa$ in Figure \ref{FigBasicCorrespondences}(B).
For $\beta=0$ set $q^0_{\bullet,\bullet}=0$ except for $q^0_{1,0}=\partial$.
\end{definition}

The chain $D\in C(L)\otimes \Lambda_{>0}$ satisfies the projective Maurer-Cartan equation
if $m^0_{B,D}(1)=\lambda L$ for some $\lambda\in\Lambda$, in which case $\lambda$
is called the curvature. Denote $\MC^B(L)$ the set of all solutions $D$ of the projective
Maurer-Cartan equation.

\begin{proposition}\label{PropBMCHF}(see Fukaya-Oh-Ohta-Ono \cite{FOOO})
The maps $m^\bullet_{B,D}$ are an $A_\infty$ algebra structure on the $\Lambda$-module $C(L)$.
Moreover if $D\in\MC^B(L)$ then $(m^1_{B,D})^2=0$, and the BMC=deformed product $m^2_{B,D}$
descends to homology and makes the corresponding $\Lambda$-module an algebra with unit $[L]$
denoted $\HF^B(L,D)$.
\end{proposition}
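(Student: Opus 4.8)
The plan is to separate the geometric input from the algebraic formalism. The geometric content is that the codimension-one boundary of the moduli spaces $\cM_{k,l}(L,\beta)$ (Axiom (2)) decomposes into fiber products of two lower-dimensional disk moduli glued at a boundary node, together with the constant-disk stratum where $q^0_{1,0}=\partial$ contributes the singular differential. Applying the pull-push construction of Section \ref{SecAxioms} to this boundary decomposition, while keeping track of the interior bulk points (which distribute freely among the two components, accounting for the $1/l!$ factor) and of the sphere-bubble strata (which occur in codimension two and hence do not contribute), one obtains a Stokes-type identity that is exactly the curved $A_\infty$ relation for the undeformed operations $q^\beta_{\bullet,\bullet}$ assembled with the Novikov weights $q^{\omega(\beta)}$. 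The signs are dictated by the orientations of the fiber products as in Fukaya \cite{F}. I would take this identity as the basic output of the axioms; a virtual proof of it is precisely what Axioms (1)--(4) are designed to bypass.

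Next I would invoke the standard algebraic fact that the maps $m^\bullet_{B,D}$ arise from the $q^\beta_{\bullet,\bullet}$ by the universal bulk/Maurer--Cartan deformation: one inserts copies of $B$ at interior points and of $D$ at boundary points between the genuine inputs and sums, as in Definition \ref{DefBMCAinftyMaps}. Because $D$ is concentrated in odd degree (degree one after the $A_\infty$ shift), regrouping the inserted $D$'s and $B$'s carries the $A_\infty$ relations for $q^\bullet$ to the $A_\infty$ relations for $m^\bullet_{B,D}$, which are in general curved with $m^0_{B,D}(1)\neq 0$. Convergence of the infinite sums over $k_0,\dots,k_d$, $l$ and $\beta$ is where the running hypotheses enter: $D\in C(L)\otimes\Lambda_{>0}$ forces the valuation of $D^{k_i}$ to grow with $k_i$, the factor $1/l!$ together with Gromov compactness controls the bulk insertions, and $q^{\omega(\beta)}$ with $\omega(\beta)>0$ for $\beta\neq 0$ supplies the Novikov filtration needed for $q$-adic convergence. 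This establishes the first sentence.

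For the second sentence I would write out the $d=1$ relation, which for a curved $A_\infty$ algebra reads, up to sign, $m^1_{B,D}(m^1_{B,D}(x))+m^2_{B,D}(m^0_{B,D}(1),x)\pm m^2_{B,D}(x,m^0_{B,D}(1))=0$. When $D\in\MC^B(L)$ the curvature is projective, $m^0_{B,D}(1)=\lambda L$ with $\lambda\in\Lambda$ and $L$ the unit, so the two curvature terms are $\lambda x$ and $\pm\lambda x$; the $A_\infty$ signs are arranged so that they cancel, leaving $(m^1_{B,D})^2=0$. This projective cancellation is the entire point of allowing a nonzero curvature proportional to the unit. The $d=2$ relation then shows that $m^1_{B,D}$ is a derivation for $m^2_{B,D}$, so $m^2_{B,D}$ descends to $\HF^B(L,D)=\ker m^1_{B,D}/\operatorname{im} m^1_{B,D}$, and the $d=3$ relation expresses the associator of $m^2_{B,D}$ as an $m^1_{B,D}$-exact term built from $m^3_{B,D}$, giving strict associativity on homology. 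Unitality of $[L]$ follows from the homotopy-unit property of the fundamental class for these operations.

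The main obstacle is the unitality statement together with the projective cancellation, since both hinge on $[L]$ behaving as a (homotopy) unit, i.e. on the fact that inserting $[L]$ at a boundary input effectively forgets that point. In the smooth singular chain model this is not a strict identity, so one must either pass to a homotopy unit or appeal to the fundamental-class version of the unitality/divisor property of the operations $q^\beta_{\bullet,\bullet}$. It is precisely here that the normalization of $C(L)$ and the constant-disk contribution $q^0_{1,0}=\partial$ are used, both to identify the curvature $m^0_{B,D}(1)$ with $\lambda$ times the unit and to make the cancellation in the $d=1$ relation exact; for the finer points I would refer to Fukaya--Oh--Ohta--Ono \cite{FOOO}.
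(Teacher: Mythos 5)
The paper offers no proof of this proposition: it is imported wholesale from Fukaya--Oh--Ohta--Ono \cite{FOOO}, which is exactly where you also defer the finer points, so there is nothing in the paper for your argument to diverge from. Your sketch correctly reconstructs the standard argument behind that citation: boundary strata of $\cM_{k,l}(L,\beta)$ plus pull-push give the curved $A_\infty$ relations for $m^\bullet_{B,D}$, the projective curvature $m^0_{B,D}(1)=\lambda L$ cancels in the $d=1$ relation by unitality of $L$, and the $d=2$, $d=3$ relations give descent of $m^2_{B,D}$ to homology and associativity there. One point worth tightening: in the $d=2$ and $d=3$ relations the curvature also enters through $m^3_{B,D}$ and $m^4_{B,D}$ with $m^0_{B,D}(1)$ inserted, so killing those terms requires (homotopy) unitality of $L$ in the \emph{higher} operations, not only in $m^2_{B,D}$; your closing appeal to the homotopy-unit machinery of \cite{FOOO} is what covers this, and it is the same gap the paper's bare citation silently absorbs.
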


\section{BMC deformation of cap product}\label{SecBMCcap}

This section introduces a new coupled bulk and Maurer-Cartan (BMC for short) deformation
$\cap_{B,D}$ of the quantum cap product operation. This constructions relies on the
correspondence axiom (3) from Section \ref{SecAxioms}.

\subsection{Chain-level definition}

The definition of the BMC-deformed quantum cap product $\cap_{B,D}$ is given at the
chain-level as follows.

\begin{definition}\label{DefBMCCap}
The BMC-deformed cap product is
$$Q\cap_{B,D} C = \sum_{k_0,k_1\geq 0}\sum_{l\geq 0}\sum_{\beta\in\pi_2(X,L)}q^{\omega(\beta)}
\frac{1}{l!}(Q\cap_{B,D}C)_\beta^{k_0,k_1;l} \quad .$$
When $\beta\neq 0$ the chain $(Q\cap_{B,D}C)_\beta^{k_0,k_1;l}=\kappa_{\#}
(D^{k_0}\times C\times D^{k_1}\times Q\times B^l)$ is obtained via pull-push from the correspondence
$\kappa$ in Figure \ref{FigCapCorrespondences} (A).
For $\beta=0$ set $(\cdot\cap_{B,D}\cdot)_0^{\bullet,\bullet;\bullet}=0$ except for
$(\cdot\cap_{B,D}\cdot)_0^{0,0;0}=\cap$.
\end{definition}

\ytableausetup{boxsize=1em}
\begin{figure}[H]
  \centering
   %add desired spacing between images, e. g. ~, \quad, \qquad, \hfill etc. 
    %(or a blank line to force the subfigure onto a new line)
    \begin{subfigure}[t]{0.4\textwidth}
        \includegraphics[width=\textwidth]{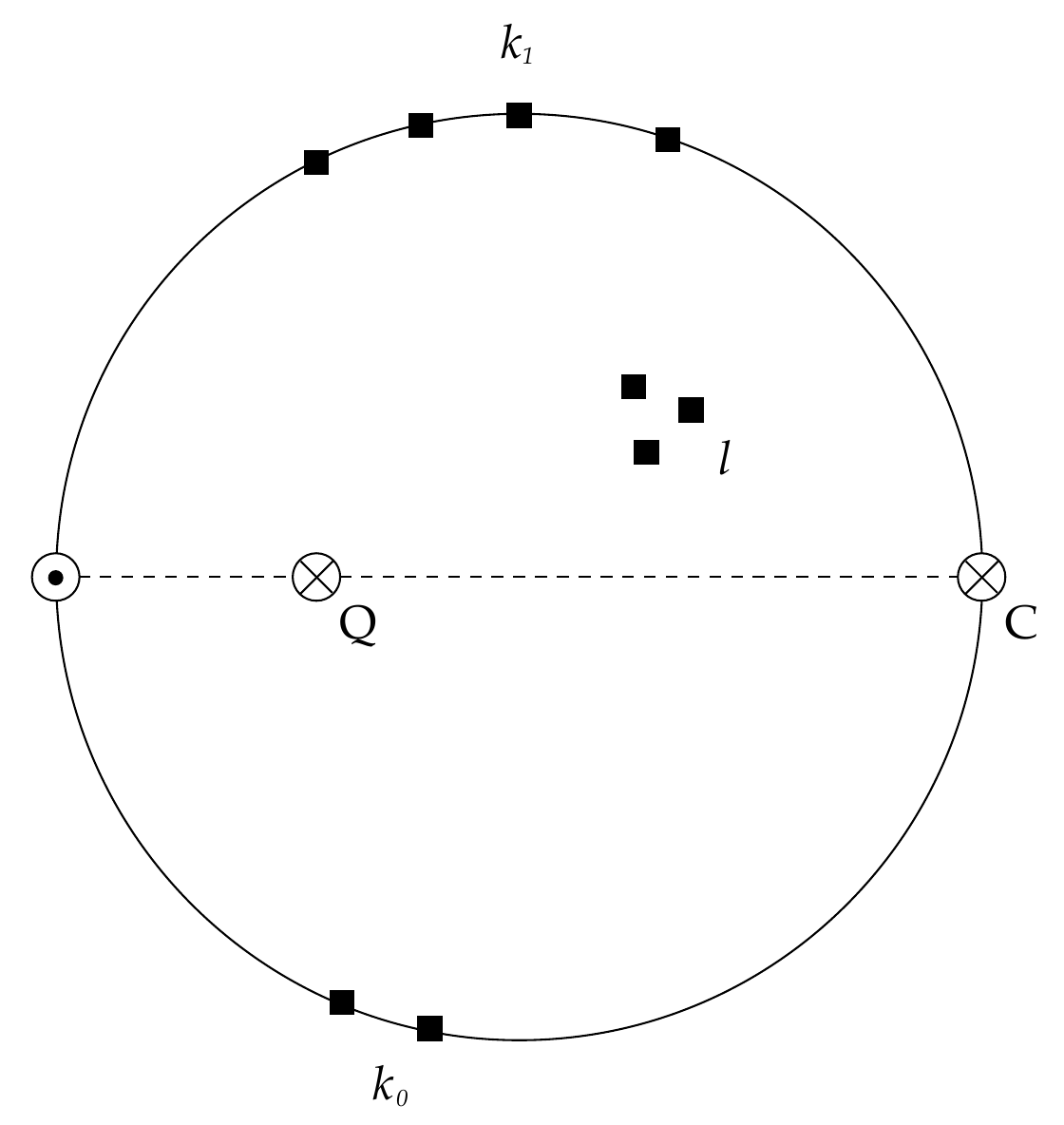}
        \caption{$\kappa:X\times L\rightsquigarrow L$ induced by
        $\cG'_{2+k_0+k_1,1+l}(L,\beta)$ for $k_0,k_1\geq 0$ and $l\geq 0$}
    \end{subfigure}
    \quad 
    \begin{subfigure}[t]{0.4\textwidth}
        \includegraphics[width=\textwidth]{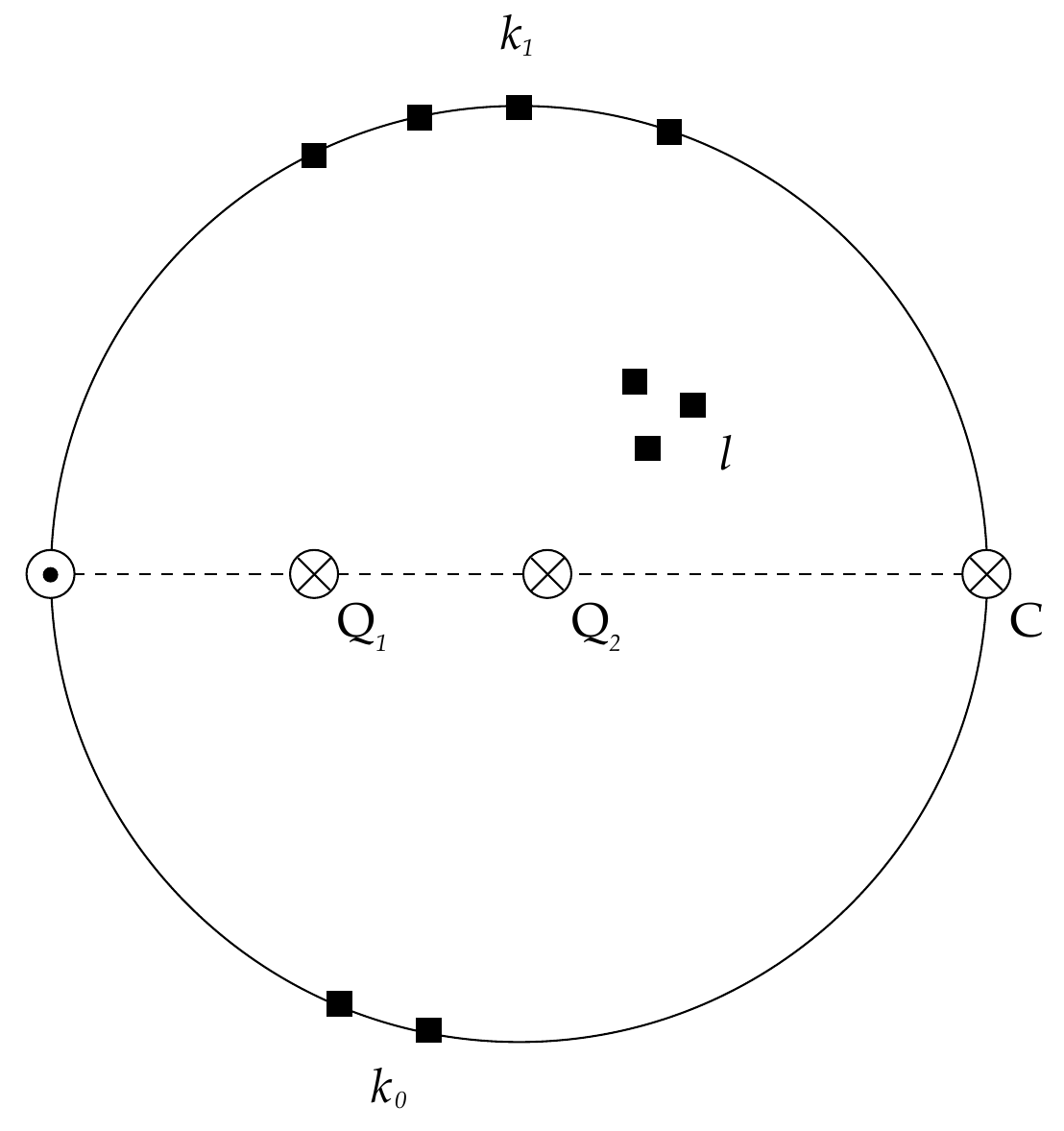}
        \caption{$\kappa:X^2\times L\rightsquigarrow L$ induced by
        $\cG''_{2+k_0+k_1,2+l}(L,\beta)$ for $k_0,k_1\geq 0$ and $l\geq 0$}
    \end{subfigure}
    \caption{Some correspondences induced by moduli of stable $J$-holomorphic spheres/disks with
    geodesic constraints. The interior/boundary marked points denoted $\blacksquare$ are constrained to map to $B/D$.}
    \label{FigCapCorrespondences}
\end{figure}

Recorded below is a degree formula that will be used later for arguing that certain
chains are degenerate.

\begin{lemma}\label{LemmaBMCCapDegree}
If $Q,C, B, D$ are homogeneous chains, then
$$|(Q\cap_{B,D}C)_\beta^{k_0,k_1;l}|=|Q|+|C|-2n+\mu(\beta)+k_0+k_1+2l-(k_0+k_1)(n-|D|)-l(2n-|B|) \quad .$$
\end{lemma}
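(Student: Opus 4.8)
The plan is to unwind the definition of the chain $(Q\cap_{B,D}C)_\beta^{k_0,k_1;l}$ as a pull-push and simply track the degree shift of the correspondence construction from Section \ref{SecAxioms}. By Definition \ref{DefBMCCap}, for $\beta\neq 0$ this chain equals $\kappa_{\#}(D^{k_0}\times C\times D^{k_1}\times Q\times B^l)$, where $\kappa:X\times L\rightsquigarrow L$ is the correspondence of Figure \ref{FigCapCorrespondences}(A) induced by the moduli space $\cM=\cG'_{2+k_0+k_1,1+l}(L,\beta)$. Since the pull-push of any correspondence $\kappa:M_\otimes\rightsquigarrow M_\odot$ raises degree by $d-d_\otimes$, with $d=\dim\cM$ and $d_\otimes=\dim M_\otimes$, the whole computation reduces to evaluating three quantities: the degree of the input product chain, the dimension $d_\otimes$ of the input manifold, and the dimension $d$ of the moduli space.

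First I would record that product chains have additive degree, so the input chain has degree $(k_0+k_1)|D|+|C|+|Q|+l|B|$. Next I would identify $M_\otimes$ by counting which marked points of $\cG'_{2+k_0+k_1,1+l}(L,\beta)$ carry inputs: the $1+k_0+k_1$ boundary marked points supporting $C$, $D^{k_0}$, and $D^{k_1}$ all evaluate to $L$, while the $1+l$ interior marked points supporting $Q$ and $B^l$ all evaluate to $X$; the single remaining boundary marked point is the output $\odot$. Hence $M_\otimes=L^{1+k_0+k_1}\times X^{1+l}$ and $d_\otimes=n(1+k_0+k_1)+2n(1+l)$. Finally I would read off $d=n+\mu(\beta)+(k_0+k_1)+2l$ directly from Axiom (3), so that the shift is $d-d_\otimes=-2n+\mu(\beta)+(k_0+k_1)+2l-n(k_0+k_1)-2nl$. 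Adding this to the input degree and collecting the $(k_0+k_1)$-terms into $-(k_0+k_1)(n-|D|)$ and the $l$-terms into $-l(2n-|B|)$ yields exactly the asserted formula.

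The computation is routine arithmetic once the manifolds are pinned down, so the only genuine obstacle is the bookkeeping in Step two. The two points that must be handled with care are that exactly one boundary marked point is the output rather than an input—so the boundary inputs number $1+k_0+k_1$, not $2+k_0+k_1$—and that interior marked points evaluate to $X$ (contributing dimension $2n$ each) whereas boundary marked points evaluate to $L$ (contributing dimension $n$ each). This interior/boundary distinction is precisely what produces the asymmetry between the terms $-l(2n-|B|)$ and $-(k_0+k_1)(n-|D|)$ in the final expression, and miscounting it is the one way the formula could go wrong.
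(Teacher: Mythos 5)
Your proof is correct and follows the only natural route, which the paper leaves implicit (Lemma \ref{LemmaBMCCapDegree} is stated there without proof): unwind $\kappa_{\#}$, read off $d=n+\mu(\beta)+(k_0+k_1)+2l$ from Axiom (3), identify $M_\otimes=L^{1+k_0+k_1}\times X^{1+l}$ so $d_\otimes=n(1+k_0+k_1)+2n(1+l)$, and add the shift $d-d_\otimes$ to the additive degree of the input chain. Your bookkeeping is right on the two delicate points — one boundary marked point is the output, and interior inputs contribute $2n$ while boundary inputs contribute $n$ — and the resulting expression matches the stated formula exactly.
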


\subsection{Relation with Maurer-Cartan equation}

A priori, the BMC-deformed quantum cap product $\cap_{B,D}$ is only defined at the chain level.
The following discussion explains why it descends to homology under the assumption
that $D$ satisfies the projective Maurer-Cartan equation, i.e. $D\in\MC^B(L)$.

\begin{definition}\label{DefGeodesicObstruction}
Define:
$$h^+_{B,D}(C,E,Q) = \sum_{l\geq 0}\sum_{k_0\geq 0}\sum_{\substack{k_1^L\geq 0\\ k_1^R\geq 0}}\sum_{\beta\in\pi_2(X,L)}
q^{\omega(\beta)}\frac{1}{l!}h^+_{B,D}(C,E,Q)_\beta^{k_0;k_1^L,k_1^R;l} \quad \textrm{and}$$
$$h^-_{B,D}(C,E,Q) = \sum_{l\geq 0}\sum_{\substack{k_0^L\geq 0\\ k_0^R\geq 0}}\sum_{k_1\geq 0}\sum_{\beta\in\pi_2(X,L)}
q^{\omega(\beta)}\frac{1}{l!}h^-_{B,D}(C,E,Q)_\beta^{k_0^L,k_0^R;k_1;l} \quad ,$$
where the chain $h^+_{B,D}(C,E,Q)_\beta^{k_0;k_1^L,k_1^R;l}=\kappa_{\#}
(D^{k_0}\times C\times D^{k_1^R}\times E\times D^{k_1^L}\times Q\times B^l)$ is obtained
via pull-push from the correspondence
$\kappa$ in
Figure \ref{FigLittleHCorrespondences} (A), and the chain
$h^-_{B,D}(C,E,Q)_\beta^{k_0^L,k_0^R;k_1;l}=\kappa_{\#}
(D^{k_0^L}\times E\times D^{k_0^R} \times C\times D^{k_1}\times Q\times B^l)$ from the one in
Figure \ref{FigLittleHCorrespondences} (B).
\end{definition}

\ytableausetup{boxsize=1em}
\begin{figure}[H]
  \centering
   %add desired spacing between images, e. g. ~, \quad, \qquad, \hfill etc. 
    %(or a blank line to force the subfigure onto a new line)
    \begin{subfigure}[t]{0.4\textwidth}
        \includegraphics[width=\textwidth]{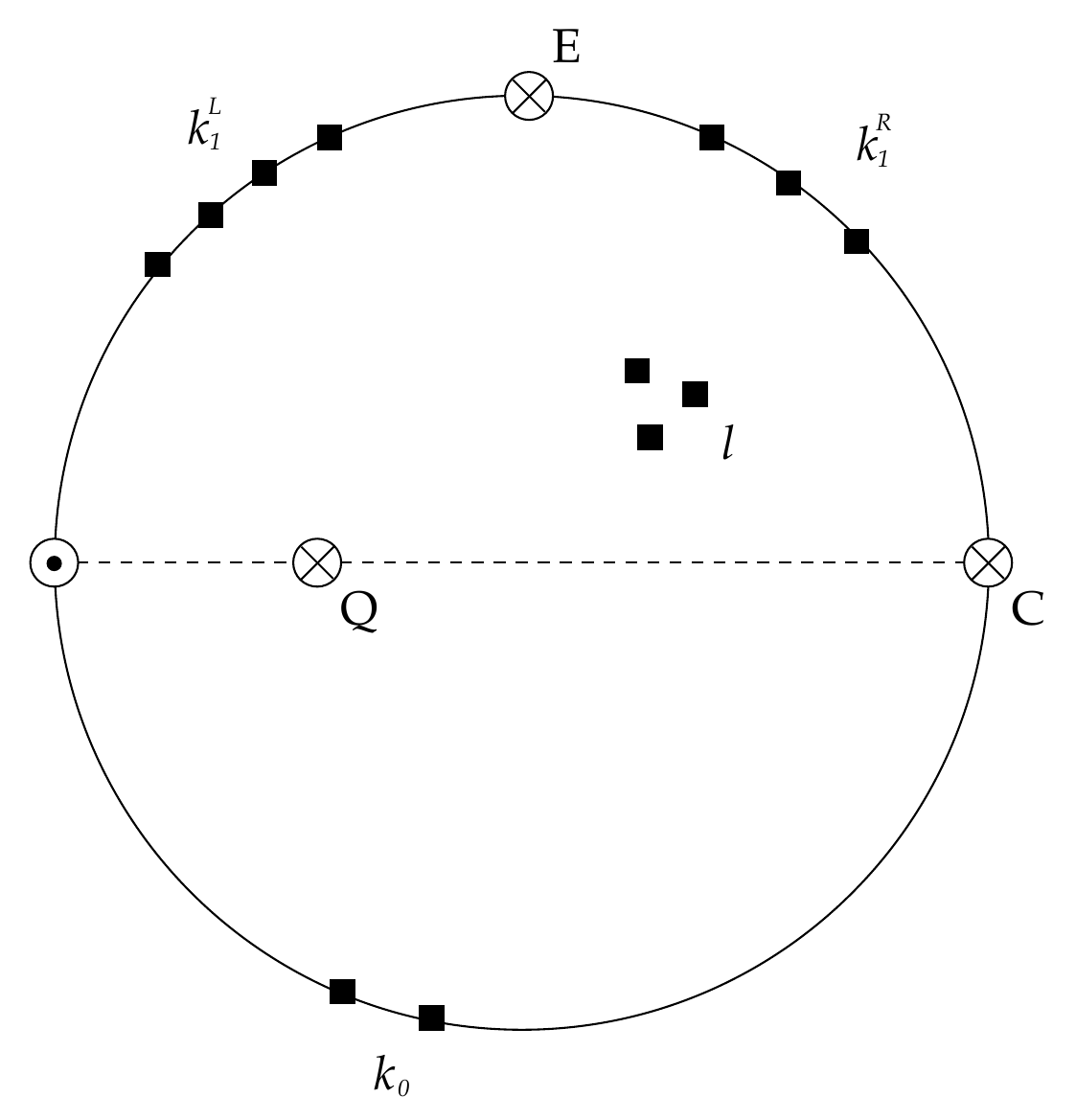}
        \caption{$\kappa:L^2\times X\rightsquigarrow L$ induced by
        $\cG'_{3+k_0+k_1^L+k_1^R,1+l}(L,\beta)$ for $k_0,k_1^L,k_1^R\geq 0$ and $l\geq 0$}
    \end{subfigure}
    \quad 
    \begin{subfigure}[t]{0.4\textwidth}
        \includegraphics[width=\textwidth]{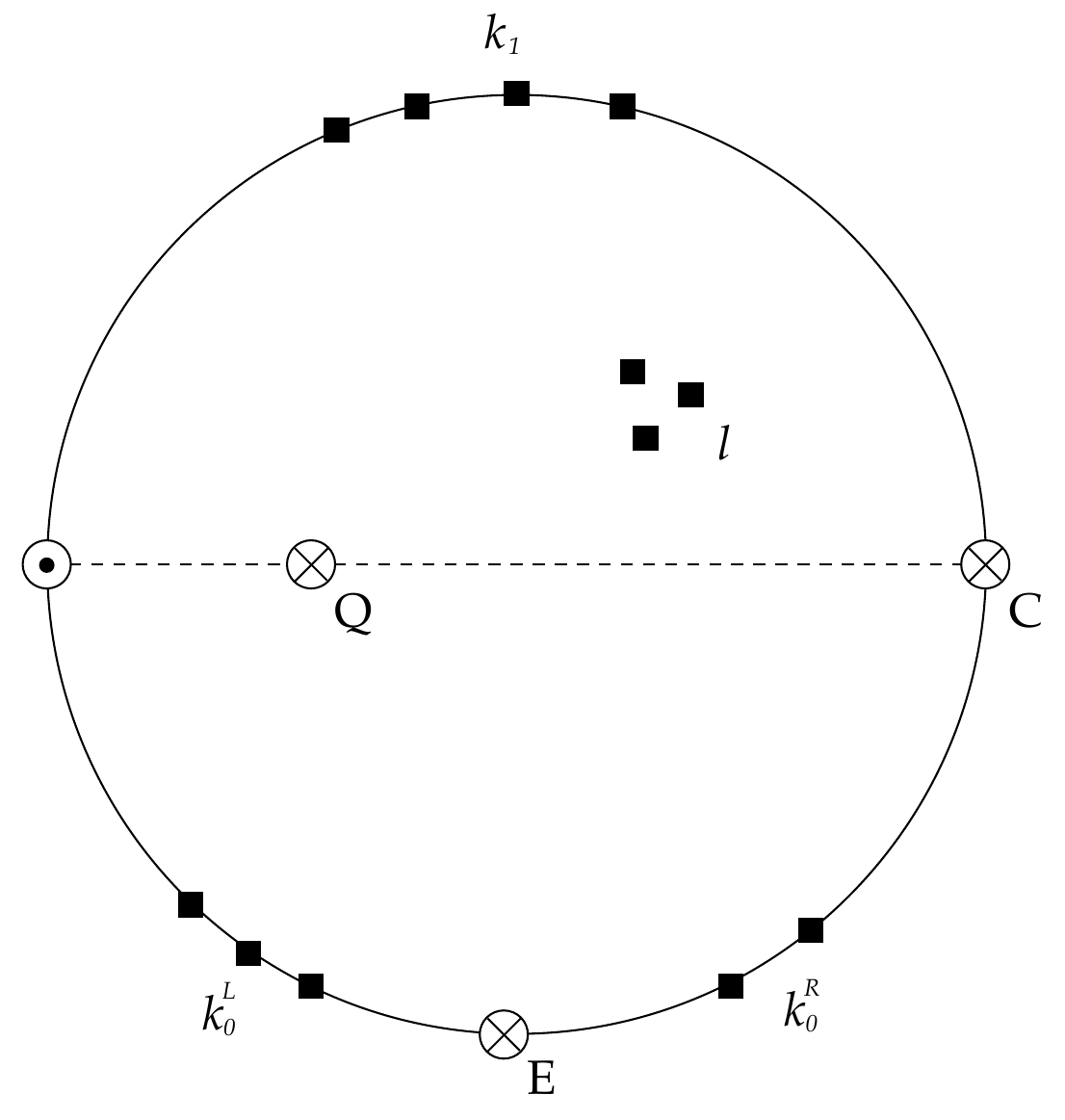}
        \caption{$\kappa:L^2\times X\rightsquigarrow L$ induced by
        $\cG'_{3+k_0^L+k_0^R+k_1,1+l}(L,\beta)$ for $k_0^L,k_0^R,k_1\geq 0$ and $l\geq 0$}
    \end{subfigure}
    \caption{Some correspondences induced by moduli of stable $J$-holomorphic disks with single
    geodesic constraints. The interior/boundary marked points denoted $\blacksquare$ are constrained to map to $B/D$.}
    \label{FigLittleHCorrespondences}
\end{figure}

\begin{lemma}\label{LemmaDegenerateGeodesicObstruction}
If $|E|=n$ then the chains $h^\pm_{B,D}(C,E,Q)$ are degenerate.
\end{lemma}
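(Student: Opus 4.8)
The plan is to exhibit, for each summand of $h^\pm_{B,D}(C,E,Q)$, a positive-dimensional direction along which the output evaluation is constant, so that the simplices produced by pull-push are degenerate and hence vanish in the normalized complex $C(L)$. The first observation is combinatorial: in the correspondences of Figure \ref{FigLittleHCorrespondences} the chain $E$ is plugged into a boundary marked point that is \emph{not} one of the two points pinned to the hyperbolic geodesic. Inspecting the cross products in Definition \ref{DefGeodesicObstruction}, the geodesic endpoint carries $C$ and the interior geodesic point carries $Q$, while $E$ sits at an ordinary boundary marked point on one of the two arcs cut out by $\odot$ and the $C$-point (on the $\odot$-side of $C$ for $h^+$, on the opposite side for $h^-$). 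This is the reason the hypothesis concerns $E$ rather than $C$ or $Q$: forgetting the $C$-point would alter the geodesic, whereas forgetting the $E$-point does not.

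I would first introduce the forgetful map
$$F:\cG'_{3+k_0+k_1^L+k_1^R,\,1+l}(L,\beta)\longrightarrow \cG'_{2+k_0+k_1^L+k_1^R,\,1+l}(L,\beta)$$
that forgets the boundary marked point carrying $E$ (and, for $h^-$, the analogous map indexed by $k_0^L,k_0^R$). By Axiom (3) both spaces are compact manifolds with corners, and comparing the two dimension formulas shows that $F$ drops dimension by exactly one; its fibers are the one-dimensional intervals of positions of the forgotten point along its boundary arc. Since the output marked point $\odot$ and the disk map $u$ are untouched by $F$, the output projection factors as $\pi_\odot=\pi_\odot'\circ F$; in particular $u(\odot)$ is independent of the position of the $E$-point.

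The key input is then the hypothesis $|E|=n$. As $\dim L=n$, the chain $E$ is top-dimensional, so in the fiber product defining the pull-push the factor $\ev_E^{*}E$ imposes a codimension-zero condition and does not constrain the position of the $E$-point: the fiber product retains the one-dimensional $F$-fiber direction, and along it $\pi_\odot$ is constant by the previous step. Triangulating the fiber product and composing with $\pi_\odot$ therefore produces singular simplices whose differential has rank at most one less than the simplex dimension, i.e. degenerate simplices, which are zero in the normalized complex. Summing over $k_0,k_1^L,k_1^R,l$ and $\beta$ (the $\beta=0$ terms being handled identically) gives $h^+_{B,D}(C,E,Q)=0$, and the same argument applied to the map forgetting the $E$-point in Figure \ref{FigLittleHCorrespondences}(B) gives $h^-_{B,D}(C,E,Q)=0$. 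As a consistency check, specializing the degree bookkeeping of Lemma \ref{LemmaBMCCapDegree} to this geometry shows that $|h^\pm|$ exceeds the degree of the corresponding $\cap_{B,D}$-chain by precisely one — exactly the dimension of the collapsing fiber of $F$.

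The hard part will be making this collapse rigorous within the manifold-with-corners framework: one must verify that $F$ is a submersion compatible with the corner strata — so that as the $E$-point slides along its arc and degenerates into a node, the source stratifies correctly over the target — and that $\ev_E^{*}E$ genuinely contributes the claimed free one-parameter direction rather than interacting with the geodesic constraint. This is the same kind of forgetful behavior axiomatized in (5)--(6) and flagged as delicate in Section \ref{SecAxioms}. Here, however, one only needs the weaker qualitative statement that $\pi_\odot$ is constant along a positive-dimensional family, which is insensitive to the precise boundary combinatorics and should survive any reasonable virtual implementation.
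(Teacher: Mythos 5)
Your proposal is correct and takes essentially the same approach as the paper: both rest on the observation that the $E$-constraint is vacuous when $|E|=n$, so the chain swept out coincides with the one from the correspondence with the $E$-point forgotten, while its parametrizing dimension is one higher (the paper's degree comparison via Lemma \ref{LemmaBMCCapDegree}, your one-dimensional fiber of $F$), which forces degeneracy. Your explicit forgetful map $F$ is simply the geometric mechanism that the paper leaves implicit in its identity $h^+_{B,D}(C,E,Q)_\beta^{k_0;k_1^L,k_1^R;l}=(Q\cap_{B,D}C)_\beta^{k_0,k_1^L+k_1^R;l}$, so no further commentary is needed.
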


\begin{proof}
Assume that all chains are homogeneous. One can see that
$$|h^+_{B,D}(C,E,Q)_\beta^{k_0;k_1^L,k_1^R;l}|=n+\mu(\beta)-|C|^\vee -|E|^\vee -|Q|^\vee
-(k_0+k_1^L+k_1^R)(|D|^\vee -1) -l(|B|^\vee -2)+1 \quad .$$
The constraint of having an input boundary point mapping
to $L$ is vacuous, so one has
$$h^+_{B,D}(C,E,Q)=(Q\cap_{B,D}C)_\beta^{k_0,k_1^L+k_1^R;l} \quad .$$
Since the chain on the right has degree one less than what computed above when $|E|=n$ (i.e. $|E|^\vee=0$)
thanks to Lemma \ref{LemmaBMCCapDegree}, the chain on the left
is degenerate. A similar argument applies to $h^-_{B,D}(C,E,Q)$.
\end{proof}

\begin{proposition}\label{PropBMCCapOnHomology}
Up to signs one has
$$m^1_{B,D}(Q\cap_{B,D}C)= (\partial Q)\cap_{B,D} C + Q \cap_{B,D} m^1_{B,D}(C) + h^\pm_{B,D}(C,m^0_{B,D}(1),Q) \quad ,$$
and when $D\in\MC^B(L)$ the BMC-deformed cap product descends to a map $\QH^B(X)\otimes \HF^B(L,D)\to \HF^B(L,D)$.
\end{proposition}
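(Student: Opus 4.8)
The plan is to establish the chain-level identity first, by analyzing the codimension-one boundary of the moduli spaces $\cG'_{2+k_0+k_1,1+l}(L,\beta)$ that define $\cap_{B,D}$ (Definition \ref{DefBMCCap}), and then to deduce the descent to homology using Lemma \ref{LemmaDegenerateGeodesicObstruction}. For the identity, I would work inside Fukaya's pull-push formalism: by axiom (3) each $\cG'$ is a compact manifold with corners with submersive evaluations, so the manifold-with-corners Stokes relation asserts that the signed sum of pull-push chains from all codimension-one faces, together with the internal simplex boundaries of the input factors, vanishes. Organizing this vanishing relation by the energy weight $q^{\omega(\beta)}$ and solving for the output differential will produce the claimed four-term formula, so the work is to enumerate the faces.

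By Gromov compactness the codimension-one strata of $\cG'$ come from a disk bubbling off at the boundary while the single geodesic constraint persists on exactly one of the two components. Regrouping the resulting pull-push chains according to whether the bubbled disk carries the output marked point, carries the input $C$ (with its adjacent $D$-insertions), or carries no input marked point yields respectively the post-composition $m^1_{B,D}(Q\cap_{B,D}C)$, the pre-composition $Q\cap_{B,D}m^1_{B,D}(C)$, and---when a curvature disk realizing $m^0_{B,D}(1)$ splits off while the geodesic and the interior point mapping to $Q$ remain on the root component---the obstruction terms $h^\pm_{B,D}(C,m^0_{B,D}(1),Q)$ of Definition \ref{DefGeodesicObstruction}. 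The simplex boundary of the chain $Q$ sitting on the geodesic supplies the remaining term $(\partial Q)\cap_{B,D}C$. Identifying the two curvature-bubble strata with exactly $h^+$ and $h^-$, with their prescribed cyclic orderings of $C$, $E$, $Q$, and controlling the orientation signs in the corners formalism, is the step I expect to be the main obstacle; this is precisely what the \emph{up to signs} qualifier absorbs, and where the bookkeeping is most delicate.

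For the descent, assume $D\in\MC^B(L)$, so that $m^0_{B,D}(1)=\lambda L$ with $\lambda\in\Lambda$ a scalar. Substituting $E=m^0_{B,D}(1)=\lambda L$ into the obstruction terms gives $h^\pm_{B,D}(C,\lambda L,Q)=\lambda\,h^\pm_{B,D}(C,L,Q)$, and since the fundamental chain $L$ has degree $|L|=n$, Lemma \ref{LemmaDegenerateGeodesicObstruction} shows these chains are degenerate and hence vanish in normalized chains. The identity then collapses to the Leibniz rule $m^1_{B,D}(Q\cap_{B,D}C)=(\partial Q)\cap_{B,D}C+Q\cap_{B,D}m^1_{B,D}(C)$, which says exactly that $\cap_{B,D}$ is a chain map from $(C(X)\otimes\Lambda,\partial)\otimes(C(L)\otimes\Lambda,m^1_{B,D})$ to $(C(L)\otimes\Lambda,m^1_{B,D})$. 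Taking $Q$ a $\partial$-cycle and $C$ an $m^1_{B,D}$-cycle produces an $m^1_{B,D}$-cycle $Q\cap_{B,D}C$, while replacing $Q$ by $\partial Q'$ or $C$ by $m^1_{B,D}(C')$ changes the output by an $m^1_{B,D}$-exact term (using $\partial Q=0$, respectively $\partial Q'$ paired with the cycle $C$). Hence $\cap_{B,D}$ passes to a well-defined bilinear map $\QH^B(X)\otimes\HF^B(L,D)\to\HF^B(L,D)$ on homology, as claimed.
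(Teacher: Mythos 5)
Your proposal is correct and follows essentially the same route as the paper: the chain-level identity is obtained by classifying the codimension-one degenerations of the moduli spaces $\cG'$ underlying $\cap_{B,D}$ (cap-disk bubble attached to an $m^1$-type disk, $C$-carrying bubble, curvature bubbles giving $h^\pm$, plus the simplex boundaries of $Q$ and $C$), and descent to homology follows by feeding $m^0_{B,D}(1)=W^B(D)L$ with $|L|=n$ into Lemma \ref{LemmaDegenerateGeodesicObstruction} so that the $h^\pm$ terms are degenerate, hence vanish in normalized chains. The only point the paper spells out that you elide is the matching of the $\frac{1}{l!}$ symmetrization coefficients when the interior $B$-points split between the two components, which is routine bookkeeping.
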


\begin{proof}
In the formula defining the BMC-deformed differential (Definition \ref{DefBMCAinftyMaps} with $k=1$),
separate the classical ($\beta = 0$) and quantum ($\beta\neq 0$) parts:
$$m^1_{B,D}(\cdot)=q^0_{1,0}(\cdot) +
\sum_{k_0,k_1\geq 0}\sum_{l\geq 0}\frac{1}{l!}q^0_{1+k_0+k_1,l}(D^{k_0},\cdot ,D^{k_1},B^l)+$$
$$
\underbracket{
\sum_{\substack{\beta\in\pi_2(X,L)\\ \beta\neq 0}}\sum_{k_0,k_1\geq 0}\sum_{l\geq 0}
T^{\omega(\beta)}\frac{1}{l!}q^\beta_{1+k_0+k_1,l}(D^{k_0},\cdot ,D^{k_1},B^l)
}_{\tilde{m}^1_{B,D}(\cdot)} \quad .$$
The classical part is $q^0_{1,0}=\partial$, and the claim
to be proven can be rewritten as
$$\partial (Q\cap_{B,D}C) =
\underbracket{\tilde{m}^1_{B,D}(Q\cap_{B,D}C)}_{(1)}+
\underbracket{(\partial Q)\cap_{B,D}C + Q\cap_{B,D}(\partial C)}_{(2)}+$$
$$
\underbracket{Q\cap_{B,D}\tilde{m}^1_{B,D}(C)}_{(3)}+
\underbracket{h^+(C,m^0_{B,D}(1),Q)}_{(4)}+
\underbracket{h^-(C,m^0_{B,D}(1),Q)}_{(5)} \quad .$$

The terms (1)-(5) arise from the following degenerations of disks contributing
to $Q\cap_{B,D}C$ (see Figure \ref{FigBMCCapDegenerations}).
\begin{enumerate}[(1)]

\item Using Definition \ref{DefBMCCap} and Definition \ref{DefBMCAinftyMaps}, one can rewrite this term as
$$\sum_{\substack{\beta[1],\beta[2]\\ \beta[1]\neq 0}}\sum_{\substack{k_0[1]\\ k_1[1]}}
\sum_{l[1]}\sum_{\substack{k_0[2]\\ k_1[2]}}\sum_{l[2]}T^{\omega(\beta[1])}T^{\omega(\beta[2])}
\frac{1}{l[1]!l[2]!}$$
$$q^{\beta[1]}_{1+k_0[1]+k_1[1],l[1]}(D^{k_0[1]},(Q\cap_{B,D}C)_{\beta[2]}^{k_0[2],k_1[2];l[2]},
D^{k_1[1]},B^{l[1]}) \quad .$$
Each chain in the sum corresponds to disk bubbling as in Figure
\ref{FigBMCCapDegenerations} (A), where the interior marked point mapping to $Q$ moves along the geodesic constraint
all the way up to the input boundary point mapping to $C$ . If a disk contributes
$T^{\omega(\beta)}(Q\cap_{B,D}C)^{k_0,k_1,l}_\beta$ to $Q\cap_{B,D}C$, then after this kind
of degeneration its class breaks up $\beta=\beta[1]+\beta[2]$, while the
boundary marked points mapping to $D$ split into two groups $k_0=k_0[1]+k_1[2]$, with $k_0[\bullet]$
and $k_1[\bullet]$ respectively on the lower and upper arcs of the two disks. Similarly, the interior
marked points mapping to $B$ split into two groups $l=l[1]+l[2]$. Degenerations inducing different
partitions of the $l$ interior points into groups of size $l[1]$ and $l[2]$ contribute the same chain,
and since there are $l!(l[1]!l[2]!)^{-1}$ such partitions overall one gets the coefficient
$$T^{\omega(\beta)}\frac{1}{l!}\frac{l!}{l[1]!l[2]!}=T^{\omega(\beta[1])}T^{\omega(\beta[2])}\frac{1}{l[1]!l[2]!} \quad .$$

\item These terms correspond to degenerating the condition of mapping marked points to
$Q$ and $D$ to conditions of mapping them to $\partial Q$ and $\partial D$. These are codimension
one thanks to Lemma \ref{LemmaBMCCapDegree}.

\item Using Definition \ref{DefBMCCap} and Definition \ref{DefBMCAinftyMaps}, one can rewrite this term as
$$\sum_{\substack{\beta[1],\beta[2]\\ \beta[1]\neq 0}}\sum_{\substack{k_0[1]\\ k_1[1]}}
\sum_{l[1]}\sum_{\substack{k_0[2]\\ k_1[2]}}\sum_{l[2]}T^{\omega(\beta[1])}T^{\omega(\beta[2])}
\frac{1}{l[1]!l[2]!}$$
$$(Q\cap_{B,D}q^{\beta[1]}_{1+k_0[1]+k_1[1],l[1]}(D^{k_0[1]},C,D^{k_1[1]},B^{l[1]}))_{\beta[2]}^{k_0[2],k_1[2];l[2]} \quad .$$
Each chain in the sum corresponds to disk bubbling as in Figure
\ref{FigBMCCapDegenerations} (B), where the interior marked point mapping to $Q$ moves along the geodesic constraint
all the way up to the output boundary point. If a disk contributes
$T^{\omega(\beta)}(Q\cap_{B,D}C)^{k_0,k_1,l}_\beta$ to $Q\cap_{B,D}C$, then after this kind
of degeneration its class breaks up $\beta=\beta[1]+\beta[2]$, while the
boundary marked points mapping to $D$ split into two groups $k_0=k_0[1]+k_1[2]$, with $k_0[\bullet]$
and $k_1[\bullet]$ respectively on the lower and upper arcs of the two disks. Similarly, the interior
marked points mapping to $B$ split into two groups $l=l[1]+l[2]$. Degenerations inducing different
partitions of the $l$ interior points into groups of size $l[1]$ and $l[2]$ contribute the same chain,
and since there are $l!(l[1]!l[2]!)^{-1}$ such partitions overall one gets the coefficient
$$T^{\omega(\beta)}\frac{1}{l!}\frac{l!}{l[1]!l[2]!}=T^{\omega(\beta[1])}T^{\omega(\beta[2])}\frac{1}{l[1]!l[2]!} \quad .$$

\item Using Definition \ref{DefGeodesicObstruction} and Definition \ref{DefBMCAinftyMaps}, one can rewrite this term as
$$\sum_{\substack{\beta[1]\\ \beta[2]}}\sum_{k[1]}\sum_{k_0[2]}\sum_{\substack{k_1^L[2]\\ k_1^R[2]}}
\sum_{\substack{l[1]\\ l[2]}}T^{\omega(\beta[1])}T^{\omega(\beta[2])}
\frac{1}{l[1]!l[2]!}$$
$$h^+_{B,D}(C,q^{\beta[1]}_{k[1],l[1]}(D^{k[1]},B^{l[1]}),Q)_{\beta[2]}^{k_0[2];k_1^L[2],k_1^R[2];l[2]} \quad .$$
Each chain in the sum corresponds to disk bubbling as in Figure
\ref{FigBMCCapDegenerations} (C), where a group of boundary marked points mapping to $D$ on the upper arc of the disk with
geodesic constraint come together. If a disk contributes
$T^{\omega(\beta)}(Q\cap_{B,D}C)^{k_0,k_1,l}_\beta$ to $Q\cap_{B,D}C$, then after this kind
of degeneration its class breaks up $\beta=\beta[1]+\beta[2]$, while the
boundary marked points mapping to $D$ that are on the upper arc split into three groups $k_1=k_1^L[2]+k[1]+k_1^R[2]$, with $k_1^L[2]$
and $k_1^R[2]$ respectively on the left and right of the attaching point of the disk bubble, while $k[1]$ are on the new disk.
The interior marked points mapping to $B$ split into two groups $l=l[1]+l[2]$. Degenerations inducing different
partitions of the $l$ interior points into groups of size $l[1]$ and $l[2]$ contribute the same chain,
and since there are $l!(l[1]!l[2]!)^{-1}$ such partitions overall one gets the coefficient
$$T^{\omega(\beta)}\frac{1}{l!}\frac{l!}{l[1]!l[2]!}=T^{\omega(\beta[1])}T^{\omega(\beta[2])}\frac{1}{l[1]!l[2]!} \quad .$$

\item Using Definition \ref{DefGeodesicObstruction} and Definition \ref{DefBMCAinftyMaps}, one can rewrite this term as
$$\sum_{\substack{\beta[1]\\ \beta[2]}}\sum_{k[1]}\sum_{\substack{k_0^L[2]\\ k_0^R[2]}}\sum_{k_1[2]}
\sum_{\substack{l[1]\\ l[2]}}T^{\omega(\beta[1])}T^{\omega(\beta[2])}
\frac{1}{l[1]!l[2]!}$$
$$h^-_{B,D}(C,q^{\beta[1]}_{k[1],l[1]}(D^{k[1]},B^{l[1]}),Q)_{\beta[2]}^{k_0^L[2],k_0^R[2];k_1[2];l[2]} \quad .$$
Each chain in the sum corresponds to disk bubbling as in Figure
\ref{FigBMCCapDegenerations} (D), where a group of boundary marked points mapping to $D$ on the lower arc of the disk with
geodesic constraint come together. If a disk contributes
$T^{\omega(\beta)}(Q\cap_{B,D}C)^{k_0,k_1,l}_\beta$ to $Q\cap_{B,D}C$, then after this kind
of degeneration its class breaks up $\beta=\beta[1]+\beta[2]$, while the
boundary marked points mapping to $D$ that are on the upper arc split into three groups $k_0=k_0^L[2]+k[1]+k_0^R[2]$, with $k_0^L[2]$
and $k_0^R[2]$ respectively on the left and right of the attaching point of the disk bubble, while $k[1]$ are on the new disk.
The interior marked points mapping to $B$ split into two groups $l=l[1]+l[2]$. Degenerations inducing different
partitions of the $l$ interior points into groups of size $l[1]$ and $l[2]$ contribute the same chain,
and since there are $l!(l[1]!l[2]!)^{-1}$ such partitions overall one gets the coefficient
$$T^{\omega(\beta)}\frac{1}{l!}\frac{l!}{l[1]!l[2]!}=T^{\omega(\beta[1])}T^{\omega(\beta[2])}\frac{1}{l[1]!l[2]!} \quad .$$

\end{enumerate}

By Gromov compactness, the only other possible degenerations of a disk contributing to $Q\cap_{B,D}C$
consist of sphere bubbling, or disk bubbling arising from moving the interior marked point
mapping to $Q$ all the way up to the upper or lower arcs. None of these degenerations shows
up in $\partial(Q\cap_{B,D}C)$, because the first has codimension two while the second
violates the geodesic constraint. Finally, the BMC-defomred cap product descends to homology
whenever $D\in\MC^B(L)$ because in this case $m^0_{B,D}(1)=W^B(D)L$, and hence the terms
$h^{\pm}_{B,D}(Q,m^0_{B,D}(1),C)$ are degenerate chains thanks to Lemma \ref{LemmaDegenerateGeodesicObstruction}.

\end{proof}

\ytableausetup{boxsize=1em}
\begin{figure}[H]
  \centering
   %add desired spacing between images, e. g. ~, \quad, \qquad, \hfill etc. 
    %(or a blank line to force the subfigure onto a new line)
    \begin{subfigure}[t]{0.4\textwidth}
        \includegraphics[width=\textwidth]{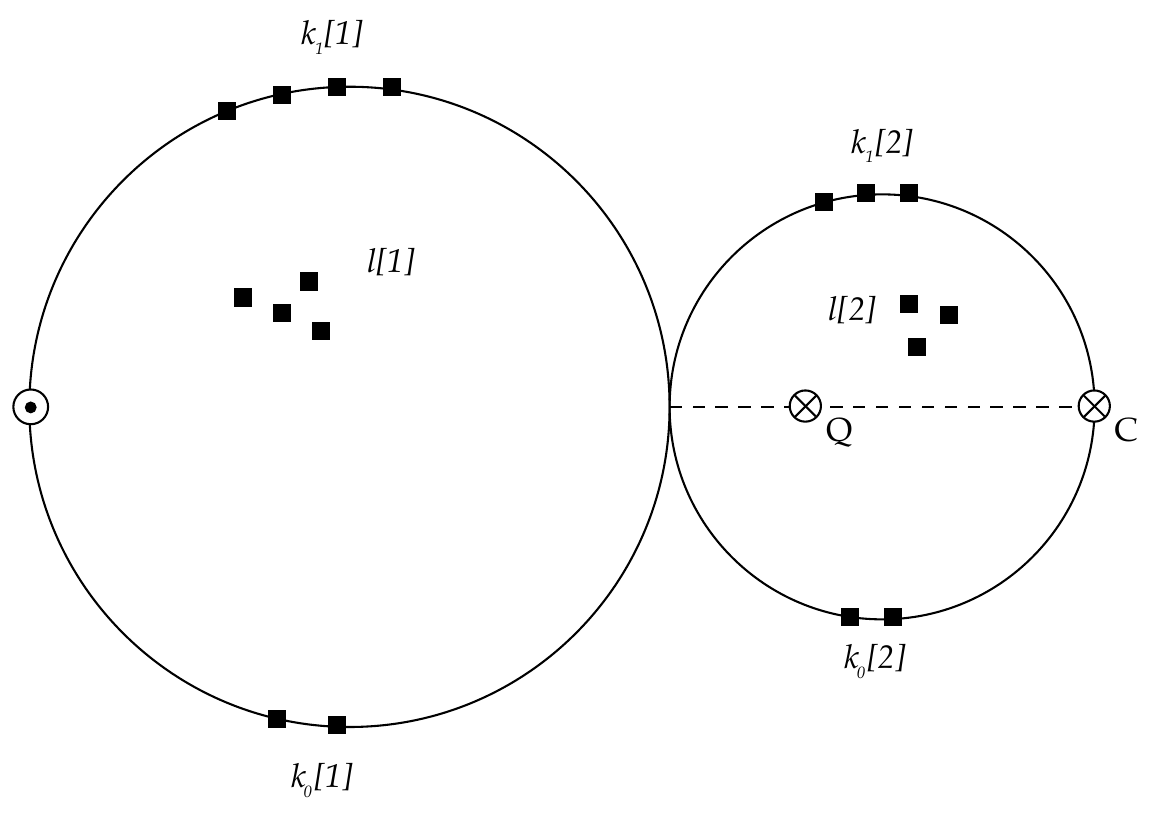}
        \caption{Term (1)}
    \end{subfigure}
    \begin{subfigure}[t]{0.4\textwidth}
        \includegraphics[width=\textwidth]{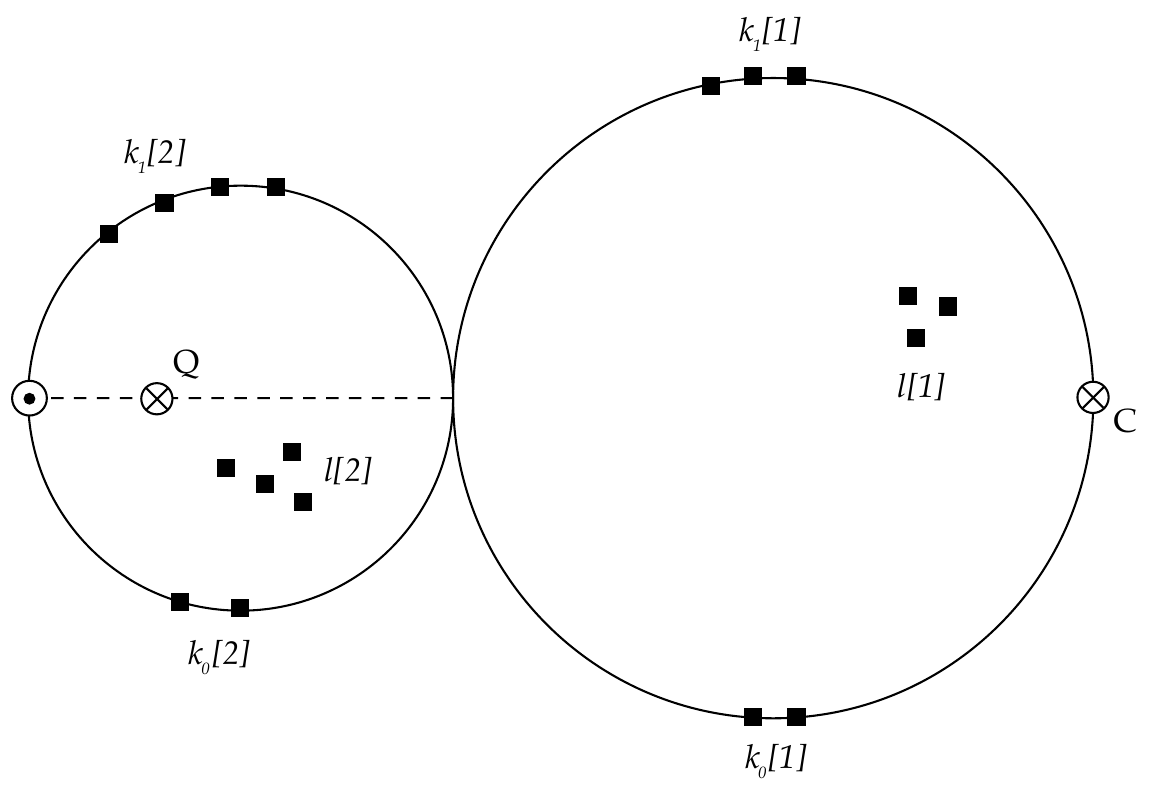}
        \caption{Term (3)}
    \end{subfigure}
    \begin{subfigure}[t]{0.3\textwidth}
        \includegraphics[width=\textwidth]{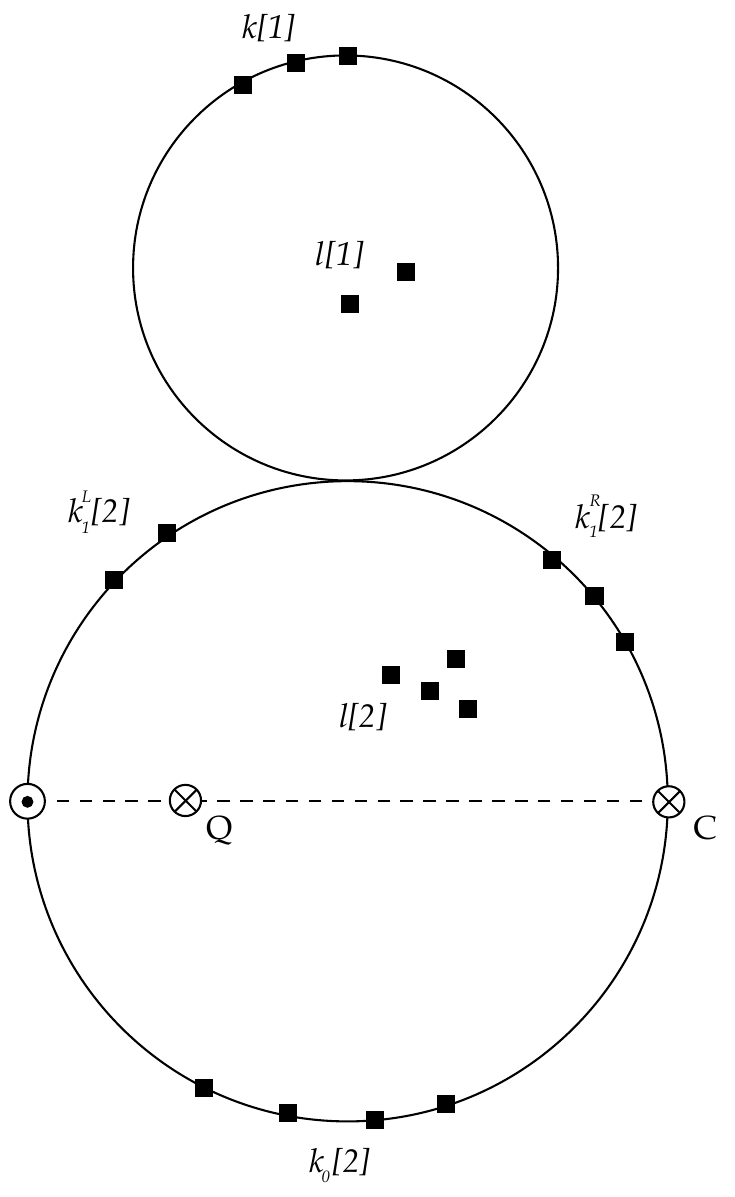}
        \caption{Term (4)}
    \end{subfigure}
    \quad\quad
    \begin{subfigure}[t]{0.3\textwidth}
        \includegraphics[width=\textwidth]{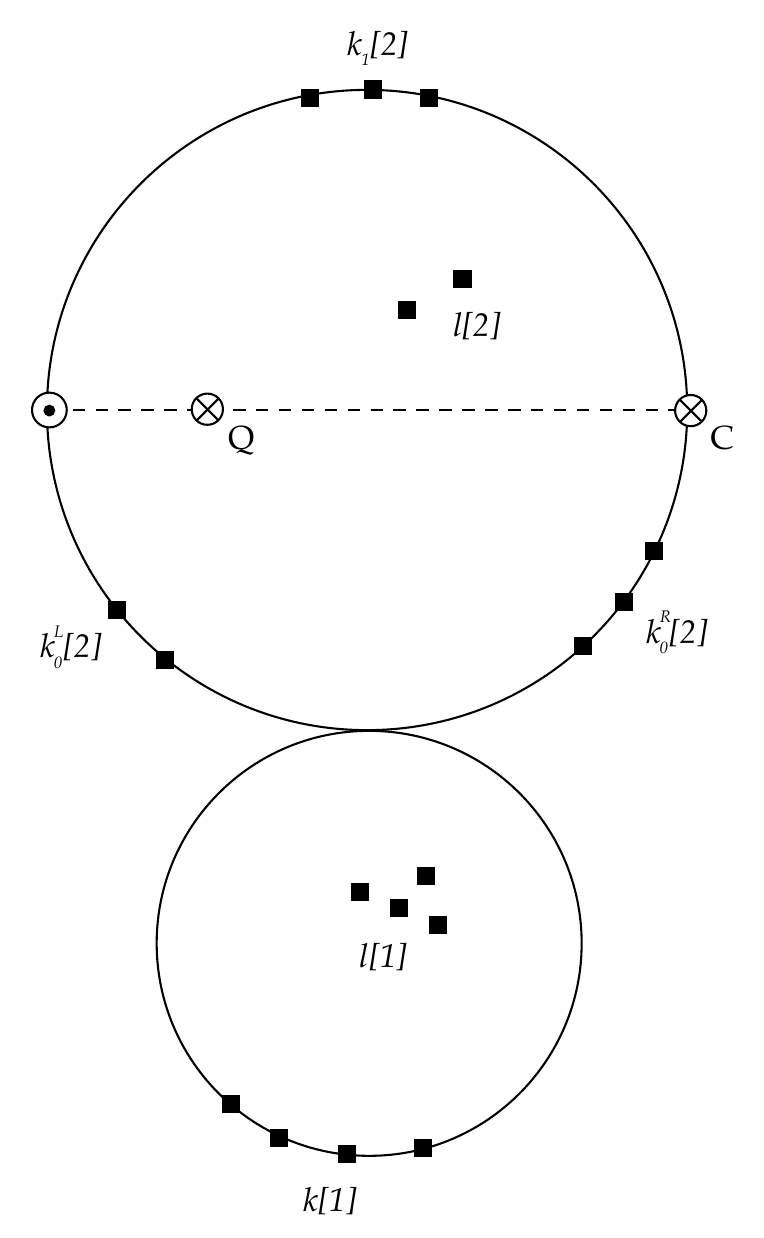}
        \caption{Term (5)}
    \end{subfigure}
    \caption{Degenerations of disks contributing to $Q\cap_{B,D}C$.}
    \label{FigBMCCapDegenerations}
\end{figure}

\section{Module structure}\label{SecBMCmod}

This section explains why the quantum cup product $\cap_{B,D}$ of Section \ref{SecBMCcap}
makes $\HF^B(L,D)$ into a module over $\QH^B(X)$. This relies on the correspondence axiom
(4) from Section \ref{SecAxioms}.

\subsection{Double geodesic constraints}

The following chains will play an important role in checking that $\cap_{B,D}$ indeed
gives a module structure (Proposition \ref{PropModuleStructure}).

\begin{definition}\label{DefBMCCapDouble}
The double BMC-deformed cap product is
$$\Theta_{B,D}(Q_1,Q_2,C)=\sum_{k_0,k_1\geq 0}\sum_{l\geq 0}\sum_{\beta\in\pi_2(X,L)}q^{\omega(\beta)}
\frac{1}{l!}(\Theta_{B,D}(Q_1,Q_2,C))_\beta^{k_0,k_1;l} \quad ,$$
where the chain $(Q\cap_{B,D}C)_\beta^{k_0,k_1;l}=\kappa_{\#}
(D^{k_0}\times C\times D^{k_1}\times Q_1\times Q_2\times B^l)$ is obtained via pull-push from the
correspondence $\kappa$ in
Figure \ref{FigCapCorrespondences} (B).
\end{definition}

\ytableausetup{boxsize=1em}
\begin{figure}[H]
  \centering
   %add desired spacing between images, e. g. ~, \quad, \qquad, \hfill etc. 
    %(or a blank line to force the subfigure onto a new line)
    \begin{subfigure}[t]{0.4\textwidth}
        \includegraphics[width=\textwidth]{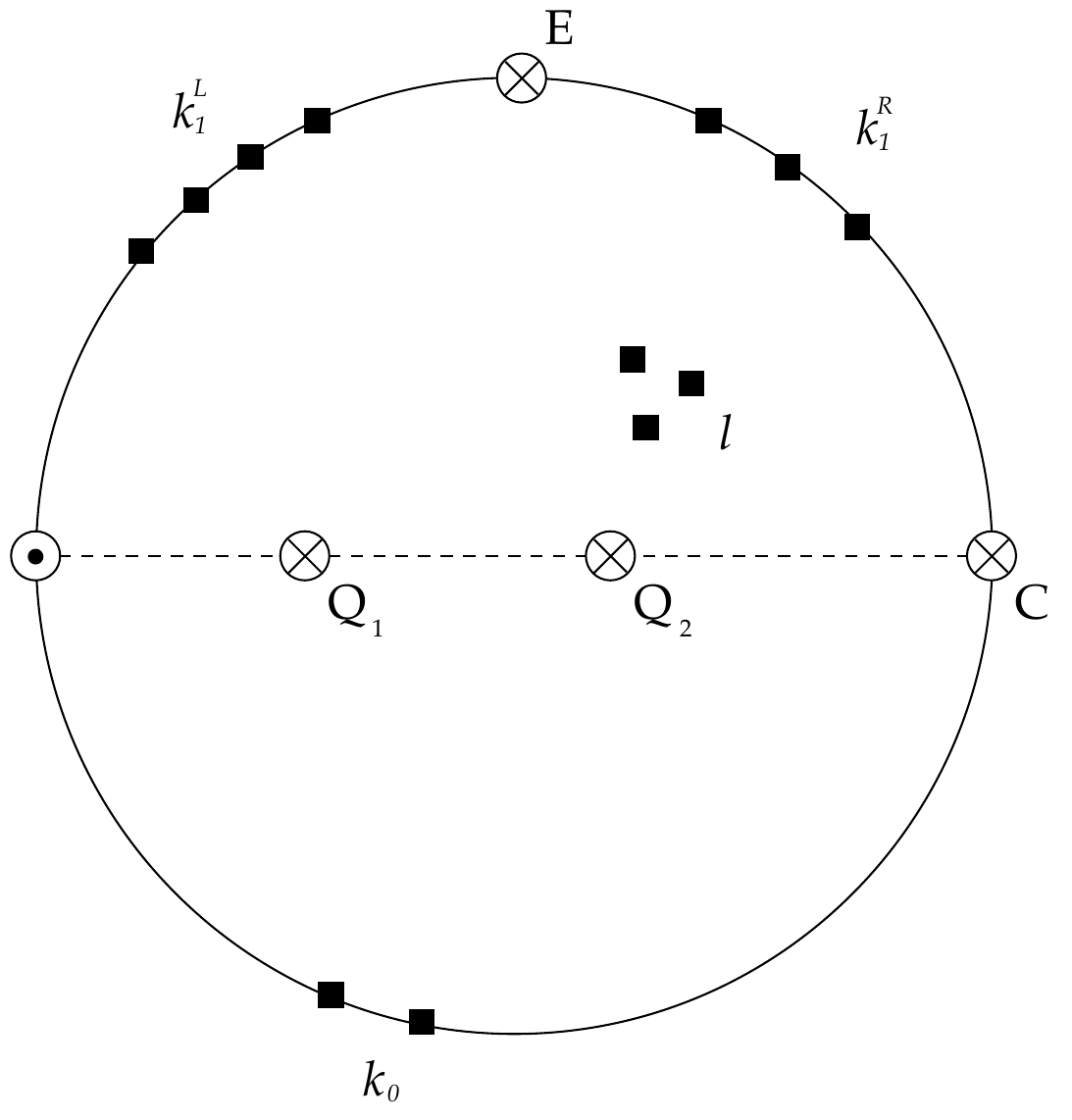}
        \caption{$\kappa:L^2\times X^2\rightsquigarrow L$ induced by
        $\cG''_{3+k_0+k_1^L+k_1^R,2+l}(L,\beta)$ for $k_0,k_1^L,k_1^R\geq 0$ and $l\geq 0$}
    \end{subfigure}
    \quad 
    \begin{subfigure}[t]{0.4\textwidth}
        \includegraphics[width=\textwidth]{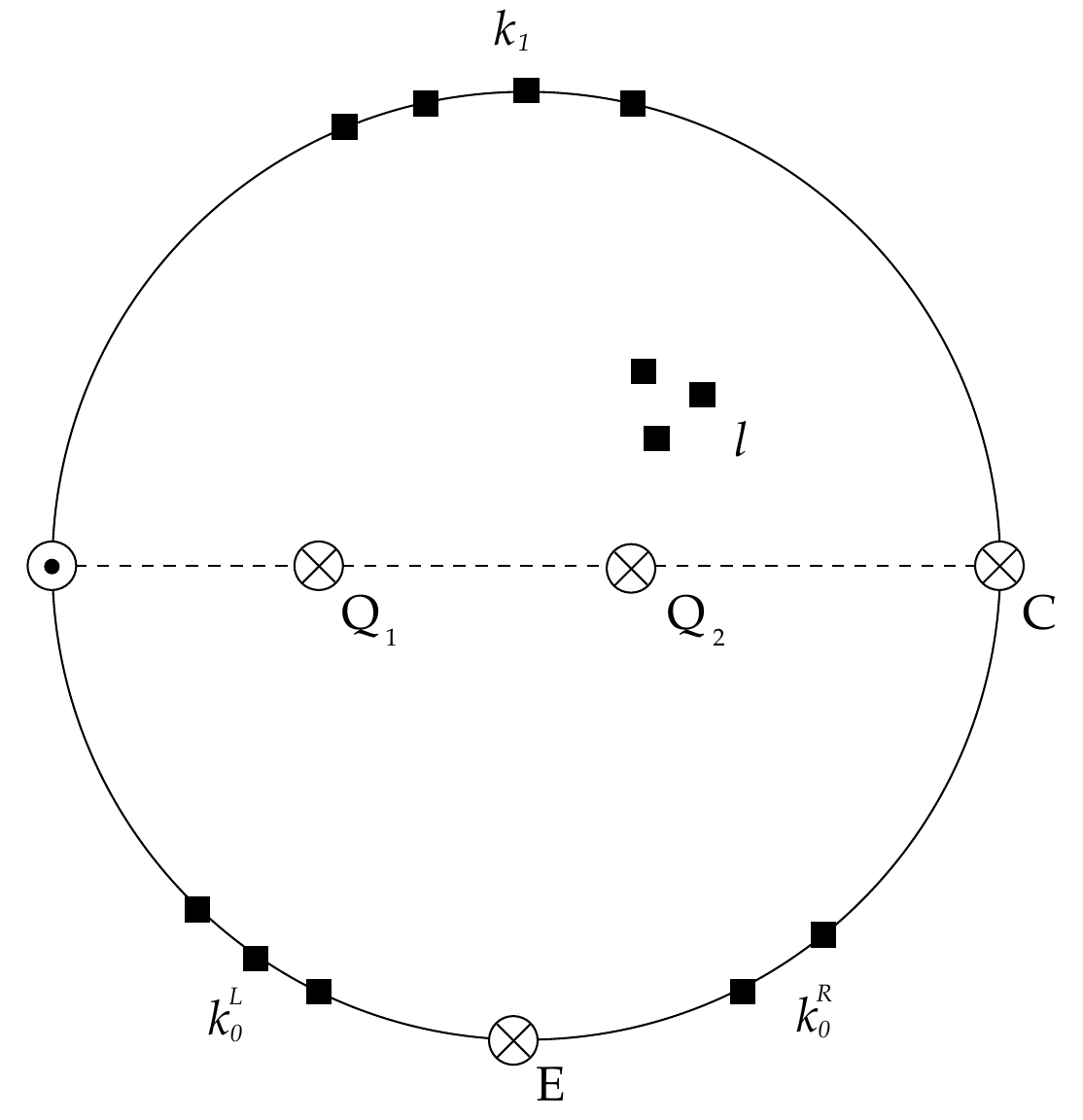}
        \caption{$\kappa:L^2\times X^2\rightsquigarrow L$ induced by
        $\cG''_{3+k_0^L+k_0^R+k_1,2+l}(L,\beta)$ for $k_0^L,k_0^R,k_1\geq 0$ and $l\geq 0$}
    \end{subfigure}
    \caption{Some correspondences induced by moduli of stable $J$-holomorphic disks with double
    geodesic constraints. The interior/boundary marked points denoted $\blacksquare$ are constrained to map to $B/D$.}
    \label{FigBigHCorrespondences}
\end{figure}

\begin{definition}\label{DefGeodesicObstructionDouble}
Define:
$$H^+_{B,D}(C,E,Q_1,Q_2) = \sum_{l\geq 0}\sum_{k_0\geq 0}\sum_{\substack{k_1^L\geq 0\\ k_1^R\geq 0}}\sum_{\beta\in\pi_2(X,L)}
q^{\omega(\beta)}\frac{1}{l!}H^+_{B,D}(C,E,Q_1,Q_2)_\beta^{k_0;k_1^L,k_1^R;l} \quad \textrm{and}$$
$$H^-_{B,D}(C,E,Q_1,Q_2) = \sum_{l\geq 0}\sum_{\substack{k_0^L\geq 0\\ k_0^R\geq 0}}\sum_{k_1\geq 0}\sum_{\beta\in\pi_2(X,L)}
q^{\omega(\beta)}\frac{1}{l!}H^-_{B,D}(C,E,Q_1,Q_2)_\beta^{k_0^L,k_0^R;k_1;l} \quad ,$$
where the chain $H^+_{B,D}(C,E,Q_1,Q_2)_\beta^{k_0;k_1^L,k_1^R;l}=\kappa_{\#}
(D^{k_0}\times C\times D^{k_1^R}\times E\times D^{k_1^L}\times Q_1\times Q_2\times B^l)$ is obtained
via pull-push from the correspondence $\kappa$ in
Figure \ref{FigBigHCorrespondences} (A), and the chain
$H^-_{B,D}(C,E,Q_1,Q_2)_\beta^{k_0^L,k_0^R;k_1;l}=\kappa_{\#}
(D^{k_0^L}\times E\times D^{k_0^R} \times C\times D^{k_1}\times Q_1\times Q_2\times B^l)$ from the one in
Figure \ref{FigBigHCorrespondences} (B).
\end{definition}

\begin{lemma}\label{LemmaBMCCapDoubleDegree}
If $Q_1,Q_2,C, B, D$ are homogeneous chains, then
$$|(\Theta_{B,D}(C,Q_1,Q_2))_\beta^{k_0,k_1^L+k_1^R;l}|=
n+\mu(\beta)-|C|^\vee -|Q_1|^\vee -|Q_2|^\vee
-(k_0+k_1^L+k_1^R)(|D|^\vee -1) -l(|B|^\vee -2)+1 \quad .$$
\end{lemma}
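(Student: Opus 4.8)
The plan is to read the degree off directly from the pull-push dimension-shift formula, by the same bookkeeping recorded in Lemma \ref{LemmaBMCCapDegree}. For a correspondence $\kappa$ induced by a moduli space of dimension $d$, the operation $\kappa_\#$ raises chain degree by $d - d_\otimes$, where $d_\otimes$ is the dimension of the product of evaluation targets attached to the input marked points. Here the correspondence is induced by $\cG''_{2+k_0+k_1,2+l}(L,\beta)$, which by Axiom (4) has dimension $d = n + \mu(\beta) + (k_0+k_1) + 2l + 1$.

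First I would identify the input space. The chain being pushed forward is $D^{k_0}\times C\times D^{k_1}\times Q_1\times Q_2\times B^l$; its boundary marked points---the $k_0+k_1$ copies of $D$ together with $C$---evaluate into $L$, while its interior marked points $Q_1$, $Q_2$ and the $l$ copies of $B$ evaluate into $X$. Hence $M_\otimes = L^{k_0+k_1+1}\times X^{l+2}$, so $d_\otimes = n(k_0+k_1+1) + 2n(l+2)$. The degree of the input chain is $(k_0+k_1)|D| + |C| + |Q_1| + |Q_2| + l|B|$, and the output degree is this plus $d - d_\otimes$.

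Next I would pass to codegrees via $|D|^\vee = n-|D|$ and $|C|^\vee = n-|C|$ for the boundary chains, and $|Q_i|^\vee = 2n-|Q_i|$, $|B|^\vee = 2n-|B|$ for the interior chains, then collect the terms proportional to $n$. These cancel completely: the $(k_0+k_1)n$ from the $D$-factors cancels the $-n(k_0+k_1)$ in $d_\otimes$, the $2nl$ from the $B$-factors cancels the $-2nl$ in $d_\otimes$, and the $n+2n+2n$ contributed by the $C$, $Q_1$ and $Q_2$ factors cancels against $-4n$ to leave a single $n$. The surviving expression is precisely
\[
n + \mu(\beta) - |C|^\vee - |Q_1|^\vee - |Q_2|^\vee - (k_0+k_1)(|D|^\vee - 1) - l(|B|^\vee - 2) + 1,
\]
and substituting $k_1 = k_1^L + k_1^R$ yields the claimed formula.

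This is routine bookkeeping with no genuine obstacle; the only points demanding care are matching each evaluation map to the correct target $L$ or $X$---noting that the output boundary point and the geodesic-root boundary point enter the moduli dimension of Axiom (4) but are not among the pull-push inputs---and tracking the single extra unit in $\dim\cG''$ relative to $\dim\cG'$, which arises because the second geodesic-constrained interior point is confined to a one-dimensional locus and hence raises the dimension by only one. This extra unit is exactly what places the double-geodesic chain in degree one higher than its single-geodesic counterpart in Lemma \ref{LemmaBMCCapDegree}.
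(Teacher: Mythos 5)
Your proof is correct and is precisely the routine pull-push bookkeeping the paper intends: the paper states this lemma without proof as the evident analogue of Lemma \ref{LemmaBMCCapDegree}, and your computation checks out exactly, with $d=n+\mu(\beta)+(k_0+k_1)+2l+1$ from Axiom (4), $d_\otimes=n(k_0+k_1+1)+2n(l+2)$, degree shift $d-d_\otimes$, and the conversion to codegrees cancelling all stray multiples of $n$ to leave the stated formula. The only slip is in your closing parenthetical: the input boundary point carrying $C$ and the output are the two boundary points on the geodesic (one of them being the ``first'' boundary marked point of Axiom (4)), so exactly one of the $2+k_0+k_1$ boundary marked points --- the output --- fails to be a pull-push input, not two; this miscount is harmless here because the $M_\otimes=L^{k_0+k_1+1}\times X^{l+2}$ and the Axiom (4) dimension you actually plug in are the correct ones, but taken literally it would force the moduli space to be $\cG''_{3+k_0+k_1,2+l}$ and would shift the answer by $1$.
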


\begin{lemma}\label{LemmaDegenerateGeodesicObstruction}
If $|E|=n$ then the chains $H^\pm_{B,D}(C,E,Q_1,Q_2)$ are degenerate.
\end{lemma}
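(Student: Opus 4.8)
The plan is to mimic the proof of Lemma \ref{LemmaDegenerateGeodesicObstruction} for the single-geodesic chains $h^\pm$, adapting it to the double-geodesic setting with two bulk-constrained interior marked points coming from $Q_1,Q_2$. The essential idea is unchanged: the input boundary marked point that is constrained to map to $L\subset X$ via $E$ carries a \emph{vacuous} constraint when $|E|=n$ (equivalently $|E|^\vee=0$), since disks already have their boundary on $L$. Forgetting this constraint should identify the correspondence underlying $H^\pm_{B,D}(C,E,Q_1,Q_2)$ with the one underlying the double BMC-deformed cap product $\Theta_{B,D}$.

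First I would assume all chains are homogeneous and compute the degree of the constituent chain $H^+_{B,D}(C,E,Q_1,Q_2)_\beta^{k_0;k_1^L,k_1^R;l}$ using the correspondence from Figure \ref{FigBigHCorrespondences}(A), which is induced by the moduli space $\cG''_{3+k_0+k_1^L+k_1^R,2+l}(L,\beta)$ of dimension $n+\mu(\beta)+(k_0+k_1^L+k_1^R)+2l+1$ from Axiom (4). Tracking the codegrees of the inputs $C$, $E$, $Q_1$, $Q_2$, the $D$-constraints and the $B$-constraints through the pull-push, I expect to obtain
$$|H^+_{B,D}(C,E,Q_1,Q_2)_\beta^{k_0;k_1^L,k_1^R;l}|=n+\mu(\beta)-|C|^\vee-|E|^\vee-|Q_1|^\vee-|Q_2|^\vee-(k_0+k_1^L+k_1^R)(|D|^\vee-1)-l(|B|^\vee-2)+1,$$
which is exactly the expression in Lemma \ref{LemmaBMCCapDoubleDegree} augmented by the extra term $-|E|^\vee$.

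Next I would invoke the forgetful identification: since the input boundary point mapping to $L$ via $E$ is vacuous, the same chain is swept out by the double-cap correspondence from Figure \ref{FigCapCorrespondences}(B), yielding
$$H^+_{B,D}(C,E,Q_1,Q_2)=(\Theta_{B,D}(C,Q_1,Q_2))_\beta^{k_0,k_1^L+k_1^R;l}.$$
Comparing the two degree formulas when $|E|=n$, i.e.\ $|E|^\vee=0$, the chain on the left has degree exactly one more than the chain on the right as computed in Lemma \ref{LemmaBMCCapDoubleDegree}. This dimensional mismatch — the parametrizing manifold has one dimension too many to sweep out the lower-dimensional target chain injectively — forces the chain to be degenerate, in the sense that its image under evaluation collapses to lower dimension and contributes trivially. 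The identical argument applies verbatim to $H^-_{B,D}$ using Figure \ref{FigBigHCorrespondences}(B).

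The main obstacle is the bookkeeping in the degree computation: one must correctly track how the two interior inputs $Q_1,Q_2$ and the double geodesic constraint interact through the higher-dimensional moduli space $\cG''$, making sure the extra geodesic marked point contributes the expected $+1$ relative to the single-geodesic case of Lemma \ref{LemmaDegenerateGeodesicObstruction}. The conceptual content — that a vacuous $L$-constraint produces a one-parameter redundancy and hence a degenerate chain — is already established in the single-geodesic proof, so the double case is genuinely routine once the degree formula of Lemma \ref{LemmaBMCCapDoubleDegree} is in hand; I would simply write ``a similar argument applies'' after presenting the $H^+$ case in full.
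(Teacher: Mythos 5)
Your overall strategy is exactly the paper's: compute the degree of the constituent chains of $H^\pm$, identify them with constituent chains of $\Theta_{B,D}$ by forgetting the vacuous $E$-constraint, and conclude degeneracy from a degree mismatch of one. However, your degree formula is wrong, and with it the argument collapses. When you apply Axiom (4) to $\cG''_{3+k_0+k_1^L+k_1^R,2+l}(L,\beta)$, the subscript convention is $2+k = 3+k_0+k_1^L+k_1^R$, i.e.\ $k=1+k_0+k_1^L+k_1^R$ (the extra boundary input carrying $E$ counts), so this moduli space has dimension $n+\mu(\beta)+(k_0+k_1^L+k_1^R)+2l+2$, not $+1$ as you wrote. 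Carrying this through the pull-push gives
$$|H^+_{B,D}(C,E,Q_1,Q_2)_\beta^{k_0;k_1^L,k_1^R;l}|=n+\mu(\beta)-|C|^\vee-|E|^\vee-|Q_1|^\vee-|Q_2|^\vee-(k_0+k_1^L+k_1^R)(|D|^\vee-1)-l(|B|^\vee-2)+2 \quad ,$$
ending in $+2$; this is the formula in the paper's proof.

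This is not a cosmetic slip. With the $+1$ formula you wrote, setting $|E|^\vee=0$ makes the degree of the $H^+$ chain \emph{equal} to the degree of $(\Theta_{B,D}(C,Q_1,Q_2))_\beta^{k_0,k_1^L+k_1^R;l}$ from Lemma \ref{LemmaBMCCapDoubleDegree}, so there is no mismatch and no degeneracy follows. Your subsequent sentence, asserting that the left-hand chain has degree exactly one more than the right-hand one, is the correct statement but contradicts your own formula: the proposal as written is internally inconsistent, precisely in the bookkeeping you yourself flagged as the main obstacle. (Heuristically, relative to $\Theta$ the extra $E$-input contributes $+1$ to the moduli dimension and $-|E|^\vee$ through the constraint, for a net $+1-|E|^\vee$; this is where the extra unit comes from, just as in the single-geodesic Lemma where $h^+$ exceeds the cap-product degree of Lemma \ref{LemmaBMCCapDegree} by $+1-|E|^\vee$.) Once the formula is corrected to end in $+2$, the rest of your argument --- vacuous constraint removal, comparison with Lemma \ref{LemmaBMCCapDoubleDegree}, and ``a similar argument for $H^-$'' --- coincides with the paper's proof.
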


\begin{proof}
Assume that all chains are homogeneous. One can see that
$$|H^+_{B,D}(C,E,Q_1,Q_2)_\beta^{k_0;k_1^L,k_1^R;l}|=n+\mu(\beta)-|C|^\vee -|E|^\vee -|Q_1|^\vee -|Q_2|^\vee
-(k_0+k_1^L+k_1^R)(|D|^\vee -1) -l(|B|^\vee -2)+2 \quad .$$
The constraint of having an input boundary point mapping
to $L$ is vacuous, so one has
$$H^+_{B,D}(C,E,Q_1,Q_2)=(\Theta_{B,D}(C,Q_1,Q_2))_\beta^{k_0,k_1^L+k_1^R;l} \quad .$$
Since the chain on the right has degree one less than what computed above when $|E|=n$ (i.e. $|E|^\vee=0$)
thanks to Lemma \ref{LemmaBMCCapDoubleDegree},
the chain on the left is degenerate. A similar argument applies to $H^-_{B,D}(C,E,Q_1,Q_2)$.
\end{proof}

\subsection{Degeneration to single geodesic constraints}

The following discussion explains how the double geodesic constraints introduced earlier
degenerate to single geodesic constraints entering the definition of BMC-deformed
quantum cap $\cap_{B,D}$, and the relation with the module structure.

\begin{lemma}\label{LemmaBoundaryBMCCapDouble}
Up to signs and degenerate chains one has
$$m^1_{B,D}(\Theta_{B,D}(Q_1,Q_2,C))=\Theta_{B,D}(Q_1,Q_2,m^1_{B,D}(C))+$$
$$Q_1\cap_{B,D}(Q_2\cap_{B,D}C)
+(Q_1\star_B Q_2)\cap_{B,D}C + H_{B,D}^\pm(C,m^0_{B,D}(1),Q_1,Q_2)\quad .$$
\end{lemma}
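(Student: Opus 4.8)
The plan is to follow the proof of Proposition~\ref{PropBMCCapOnHomology} line for line, enumerating the codimension-one boundary strata of the moduli spaces $\cG''_{2+k,2+l}(L,\beta)$ whose evaluation sweeps out $\Theta_{B,D}(Q_1,Q_2,C)$. As there, I would first split $m^1_{B,D}$ into its classical part $q^0_{1,0}=\partial$ and its quantum part $\tilde m^1_{B,D}$, so that the claimed identity is equivalent to a decomposition of the geometric boundary $\partial\bigl(\Theta_{B,D}(Q_1,Q_2,C)\bigr)$ into recognizable pieces, with the quantum correction $\tilde m^1_{B,D}(\Theta_{B,D}(Q_1,Q_2,C))$ coming from a disk bubbling at the output boundary point and recombining with $\partial\Theta_{B,D}$ to produce the full $m^1_{B,D}(\Theta_{B,D}(Q_1,Q_2,C))$ on the left. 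By Gromov compactness and axiom (4), the remaining strata are: a disk bubbling at the input point mapping to $C$, reassembling into $\Theta_{B,D}(Q_1,Q_2,m^1_{B,D}(C))$ together with the classical differential $\Theta_{B,D}(Q_1,Q_2,\partial C)$; the strata where groups of $D$-marked points collide on the upper or lower arc next to a bubbled disk carrying $m^0_{B,D}(1)$, which give $H^\pm_{B,D}(C,m^0_{B,D}(1),Q_1,Q_2)$ exactly as terms (4)-(5) did before; and two genuinely new strata involving the pair of interior marked points on the geodesic. (The classical differentials of $Q_1,Q_2$ drop out once these are taken to be cycles, as they will be when this lemma is applied.)

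The two new strata are what distinguishes the double cap from the single one. In the first, the underlying disk degenerates \emph{along} the geodesic, so that the interior point mapping to $Q_2$ (the one nearer the input $C$) together with $C$ ends up on an inner disk while the point mapping to $Q_1$ stays on the outer disk, the node being an interior point on the outer geodesic; distributing the $D$- and $B$-marked points across the two components, with the combinatorial factor $l!/(l[1]!\,l[2]!)$ absorbing the choice of partition of the $B$-points exactly as in Proposition~\ref{PropBMCCapOnHomology}, this sweeps out $Q_1\cap_{B,D}(Q_2\cap_{B,D}C)$. In the second, the two interior points mapping to $Q_1$ and $Q_2$ collide and bubble off a sphere carrying $Q_1$, $Q_2$ and some subset of the $B$-points; by Definition~\ref{DefBCup} the evaluation at the resulting node is precisely $Q_1\star_B Q_2$, and the residual disk retains a single interior point on the geodesic (the node), so it sweeps out $(Q_1\star_B Q_2)\cap_{B,D}C$.

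The hard part will be the collision stratum. First one must check that it is genuinely codimension one rather than the codimension two of a generic sphere bubble: this is exactly where axiom (4) enters, since the geodesic constraint forces each of the two interior points to contribute only one to the dimension (compare the dimension in axiom (4) with the $+2$ per interior point in axiom (2)), so that a sphere forming at their collision is codimension one. Second, one must confirm that the node evaluation reproduces $Q_1\star_B Q_2$ with the correct weight, matching the $\tfrac{1}{l!}$ normalization of Definition~\ref{DefBCup} against the splitting of the $B$-points between the sphere and the disk. All other degenerations are excluded as in Proposition~\ref{PropBMCCapOnHomology}: sphere bubbles away from the collision point are codimension two, and disk bubbles that would carry an interior point off the geodesic violate the geodesic constraint. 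Finally, the phrase \emph{up to signs and degenerate chains} lets me defer the orientation bookkeeping and discard the terms killed by Lemma~\ref{LemmaDegenerateGeodesicObstruction}, so that the entire content of the lemma reduces to the identification of these boundary strata.
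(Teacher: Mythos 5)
Your proposal is correct and follows essentially the same route as the paper's proof: split $m^1_{B,D}=\partial+\tilde m^1_{B,D}$, enumerate the codimension-one degenerations of disks contributing to $\Theta_{B,D}(Q_1,Q_2,C)$ (bubbling at the output, bubbling at the input $C$, splitting along the geodesic, sphere bubbling from the collision of the two geodesic-constrained interior points, and collisions of $D$-points on the upper/lower arcs yielding $H^\pm_{B,D}(C,m^0_{B,D}(1),Q_1,Q_2)$), match the $l!/(l[1]!\,l[2]!)$ combinatorics, and exclude the remaining strata by Gromov compactness, codimension two, or violation of the geodesic constraint. Your dimension-count justification (via axiom (4)) that the $Q_1,Q_2$-collision stratum is codimension one is a welcome elaboration of a point the paper leaves implicit; only your phrase describing the node in the $Q_1\cap_{B,D}(Q_2\cap_{B,D}C)$ stratum as ``an interior point'' is loose, since that node is a boundary node serving as the input boundary marked point on the outer disk's geodesic, exactly as your final formula requires.
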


\begin{proof}

As in the proof of Proposition \ref{PropBMCCapOnHomology}, separate the classical
and quantum parts $m^1_{B,D}=\partial + \tilde{m}^1_{B,D}$. The claim to be proven can
then be rewritten as

$$
\partial(\Theta_{B,D}(Q_1,Q_2,C))=
\underbracket{\tilde{m}^1_{B,D}(\Theta_{B,D}(Q_1,Q_2,C))}_{(1)}+
\underbracket{\Theta_{B,D}(Q_1,Q_2,\partial C)}_{(2)}+
$$
$$
\underbracket{\Theta_{B,D}(Q_1,Q_2,\tilde{m}^1_{B,D}(C)}_{(3)}
+
\underbracket{Q_1\cap_{B,D}(Q_2\cap_{B,D}C)}_{(4)}+
\underbracket{(Q_1\star_B Q_2)\cap_{B,D}C}_{(5)}+
$$
$$
\underbracket{H^+_{B,D}(C,m^0_{B,D}(1),Q_1,Q_2)}_{(6)}+
\underbracket{H^-_{B,D}(C,m^0_{B,D}(1),Q_1,Q_2)}_{(7)} \quad .
$$

The terms above arise from the following degenerations of disks contributing to
$\Theta_{B,D}(Q_1,Q_2,C)$ (see Figure \ref{FigBMCCapDoubleDegenerations}).
\begin{enumerate}[(1)]

\item Using Definition \ref{DefBMCCapDouble} and Definition \ref{DefBMCAinftyMaps},
one can rewrite this term as

$$\sum_{\substack{k_0[1]\\ k_1[1]}}\sum_{l[1]}\sum_{\substack{k_0[2]\\ k_1[2]}}\sum_{l[2]}
\sum_{\substack{\beta[1]\neq 0\\ \beta[2]}}T^{\omega(\beta[1])}T^{\omega(\beta[2])}\frac{1}{l[1]!l[2]!}$$
$$
q^{\beta[1]}_{1+k_0[1]+k_1[1],l[1]}(D^{k_0[1]},
(\Theta_{B,D}(Q_1,Q_2,C))_{\beta[2]}^{k_0[2],k_1[2];l[2]},D^{k_1[1]},B^{l[1]}) \quad .$$

Each chain in the sum corresponds to disk bubbling as in Figure
\ref{FigBMCCapDoubleDegenerations} (A), where the interior marked points mapping to $Q_1,Q_2$ move along the geodesic constraint
all the way up to the input boundary point mapping to $C$ . If a disk contributes
$T^{\omega(\beta)}(\Theta_{B,D}(Q_1,Q_2,C))^{k_0,k_1,l}_\beta$ to $\Theta_{B,D}(Q_1,Q_2,C)$, then after this kind
of degeneration its class breaks up $\beta=\beta[1]+\beta[2]$, while the
boundary marked points mapping to $D$ split into two groups $k_0=k_0[1]+k_1[2]$, with $k_0[\bullet]$
and $k_1[\bullet]$ respectively on the lower and upper arcs of the two disks. Similarly, the interior
marked points mapping to $B$ split into two groups $l=l[1]+l[2]$. Degenerations inducing different
partitions of the $l$ interior points into groups of size $l[1]$ and $l[2]$ contribute the same chain,
and since there are $l!(l[1]!l[2]!)^{-1}$ such partitions overall one gets the coefficient
$$T^{\omega(\beta)}\frac{1}{l!}\frac{l!}{l[1]!l[2]!}=T^{\omega(\beta[1])}T^{\omega(\beta[2])}\frac{1}{l[1]!l[2]!} \quad .$$

\item This term corresponds to degenerating the condition of mapping a marked point to
$C$ to the condition of mapping it to $\partial C$. This is codimension
one thanks to Lemma \ref{LemmaBMCCapDoubleDegree}.

\item Using Definition \ref{DefBMCCapDouble} and Definition \ref{DefBMCAinftyMaps},
one can rewrite this term as

$$\sum_{\substack{k_0[1]\\ k_1[1]}}\sum_{l[1]}\sum_{\substack{k_0[2]\\ k_1[2]}}\sum_{l[2]}
\sum_{\substack{\beta[1]\\ \beta[2]\neq 0}}T^{\omega(\beta[1])}T^{\omega(\beta[2])}\frac{1}{l[1]!l[2]!}$$
$$\left( \Theta_{B,D}(Q_1,Q_2,q^{\beta[2]}_{1+k_0[2]+k_1[2],l[2]}
(D^{k_0[2]},C,D^{k_1[2]},B^{l[2]})\right)_{\beta[1]}^{k_0[1],k_1[1];l[1]} \quad .$$

Each chain in the sum corresponds to disk bubbling as in Figure
\ref{FigBMCCapDoubleDegenerations} (B), where the interior marked points mapping to $Q_1,Q_2$ move along the geodesic constraint
all the way up to the output boundary point. If a disk contributes
$T^{\omega(\beta)}(\Theta_{B,D}(Q_1,Q_2,C))^{k_0,k_1,l}_\beta$ to $\Theta_{B,D}(Q_1,Q_2,C)$, then after this kind
of degeneration its class breaks up $\beta=\beta[1]+\beta[2]$, while the
boundary marked points mapping to $D$ split into two groups $k_0=k_0[1]+k_1[2]$, with $k_0[\bullet]$
and $k_1[\bullet]$ respectively on the lower and upper arcs of the two disks. Similarly, the interior
marked points mapping to $B$ split into two groups $l=l[1]+l[2]$. Degenerations inducing different
partitions of the $l$ interior points into groups of size $l[1]$ and $l[2]$ contribute the same chain,
and since there are $l!(l[1]!l[2]!)^{-1}$ such partitions overall one gets the coefficient
$$T^{\omega(\beta)}\frac{1}{l!}\frac{l!}{l[1]!l[2]!}=T^{\omega(\beta[1])}T^{\omega(\beta[2])}\frac{1}{l[1]!l[2]!} \quad .$$

\item Using Definition \ref{DefBMCCap}, one can rewrite this term as

$$\sum_{\substack{k_0[1]\\ k_1[1]}}\sum_{l[1]}\sum_{\substack{k_0[2]\\ k_1[2]}}\sum_{l[2]}
\sum_{\substack{\beta[1]\\ \beta[2]}}T^{\omega(\beta[1])}T^{\omega(\beta[2])}\frac{1}{l[1]!l[2]!}$$
$$\left( Q_1\cap_{B,D} (Q_2\cap_{B,D} C)_{\beta[2]}^{k_0[2],k_1[2];l[2]}\right)_{\beta[1]}^{k_0[1],k_1[1];l[1]} \quad .$$

Each chain in the sum corresponds to disk bubbling as in Figure
\ref{FigBMCCapDoubleDegenerations} (C), where one of the interior marked points mapping to $Q_1,Q_2$ move along the geodesic constraint
all the way up to the nearest boundary point. If a disk contributes
$T^{\omega(\beta)}(\Theta_{B,D}(Q_1,Q_2,C))^{k_0,k_1,l}_\beta$ to $\Theta_{B,D}(Q_1,Q_2,C)$, then after this kind
of degeneration its class breaks up $\beta=\beta[1]+\beta[2]$, while the
boundary marked points mapping to $D$ split into two groups $k_0=k_0[1]+k_1[2]$, with $k_0[\bullet]$
and $k_1[\bullet]$ respectively on the lower and upper arcs of the two disks. Similarly, the interior
marked points mapping to $B$ split into two groups $l=l[1]+l[2]$. Degenerations inducing different
partitions of the $l$ interior points into groups of size $l[1]$ and $l[2]$ contribute the same chain,
and since there are $l!(l[1]!l[2]!)^{-1}$ such partitions overall one gets the coefficient
$$T^{\omega(\beta)}\frac{1}{l!}\frac{l!}{l[1]!l[2]!}=T^{\omega(\beta[1])}T^{\omega(\beta[2])}\frac{1}{l[1]!l[2]!} \quad .$$

\item Using Definition \ref{DefBCup} and Definition \ref{DefBMCCap}, one can rewrite this term as

$$\sum_{\substack{k_0[2]\\ k_1[2]}}\sum_{\substack{l[1]\\ l[2]}}\sum_{\substack{\beta[1]\\ \beta[2]}}
T^{\omega(\beta[1])}T^{\omega(\beta[2])}\frac{1}{l[1]!l[2]!}
\left( (Q_1\star_B Q_2)_{\beta[1]}^{l[1]}\cap_{B,D} C \right)_{\beta[2]}^{k_0[2],k_1[2];l[2]} \quad .$$

Each chain in the sum corresponds to sphere bubbling as in Figure \ref{FigBMCCapDoubleDegenerations} (D), where the two
interior marked points mapping to $Q_1,Q_2$ come together along the geodesic constraint.
If a disk contributes
$T^{\omega(\beta)}(\Theta_{B,D}(Q_1,Q_2,C))^{k_0[2],k_1[2],l}_\beta$ to $\Theta_{B,D}(Q_1,Q_2,C)$, then after this kind
of degeneration its class breaks up $\beta=\beta[1]+\beta[2]$ with $\beta[1]\in\pi_2(X)$ and
$\beta[2]\in\pi_2(X,L)$, while the interior
marked points mapping to $B$ split into two groups $l=l[1]+l[2]$. Degenerations inducing different
partitions of the $l$ interior points into groups of size $l[1]$ and $l[2]$ contribute the same chain,
and since there are $l!(l[1]!l[2]!)^{-1}$ such partitions overall one gets the coefficient
$$T^{\omega(\beta)}\frac{1}{l!}\frac{l!}{l[1]!l[2]!}=T^{\omega(\beta[1])}T^{\omega(\beta[2])}\frac{1}{l[1]!l[2]!} \quad .$$

\item Using Definition \ref{DefBMCAinftyMaps} with $k=0$ and Definition \ref{DefGeodesicObstructionDouble},
one can rewrite this term as

$$\sum_{\substack{l[1]\\ l[2]}}\sum_{\substack{k_0[1]\\ k_0[2]}}\sum_{\substack{k_1^L[1]\\ k_1^R[1]}}
\sum_{\substack{\beta[1]\\ \beta[2]}}T^{\omega(\beta[1])}T^{\omega(\beta[2])}\frac{1}{l[1]!l[2]!}
H^+_{B,D}(C,q^{\beta[2]}_{k_0[2],l[2]}(D^{k_0[2]},B^{l[2]}),Q_1,Q_2)_{\beta[1]}^{k_0[1];k_1^L[1],k_1^R[1];l[1]} \quad .$$

Each chain in the sum corresponds to disk bubbling as in Figure \ref{FigBMCCapDoubleDegenerations} (E), where a group of boundary
marked points mapping to $D$ on the upper arc of the disk with geodesic constraint come together.
If a disk contributes $T^{\omega(\beta)}(\Theta_{B,D}(Q_1,Q_2,C))^{k_0[2],k_1[2],l}_\beta$ to $\Theta_{B,D}(Q_1,Q_2,C)$, then after this kind
of degeneration its class breaks up $\beta=\beta[1]+\beta[2]$, while the
boundary marked points mapping to $D$ that are on the upper arc split into three groups $k_1=k_1^L[2]+k[1]+k_1^R[2]$, with $k_1^L[2]$
and $k_1^R[2]$ respectively on the left and right of the attaching point of the disk bubble, while $k[1]$ are on the new disk.
The interior marked points mapping to $B$ split into two groups $l=l[1]+l[2]$. Degenerations inducing different
partitions of the $l$ interior points into groups of size $l[1]$ and $l[2]$ contribute the same chain,
and since there are $l!(l[1]!l[2]!)^{-1}$ such partitions overall one gets the coefficient
$$T^{\omega(\beta)}\frac{1}{l!}\frac{l!}{l[1]!l[2]!}=T^{\omega(\beta[1])}T^{\omega(\beta[2])}\frac{1}{l[1]!l[2]!} \quad .$$
 
\item Using Definition \ref{DefBMCAinftyMaps} with $k=0$ and Definition \ref{DefGeodesicObstructionDouble},
one can rewrite this term as

$$\sum_{\substack{l[1]\\ l[2]}}\sum_{\substack{k_0^L[1]\\ k_0^R[1]}}\sum_{\substack{k_1[1]]\\ k_0[2]}}
\sum_{\substack{\beta[1]\\ \beta[2]}}T^{\omega(\beta[1])}T^{\omega(\beta[2])}\frac{1}{l[1]!l[2]!}
H^-_{B,D}(C,q^{\beta[2]}_{k_0[2],l[2]}(D^{k_0[2]},B^{l[2]}),Q_1,Q_2)_{\beta[1]}^{k_0^L[1],k_0^R[1];k_1[1];l[1]} \quad .$$

Each chain in the sum corresponds to disk bubbling as in Figure \ref{FigBMCCapDoubleDegenerations} (F), where a group of boundary
marked points mapping to $D$ on the lower arc of the disk with geodesic constraint come together.
If a disk contributes $T^{\omega(\beta)}(\Theta_{B,D}(Q_1,Q_2,C))^{k_0[2],k_1[2],l}_\beta$ to $\Theta_{B,D}(Q_1,Q_2,C)$, then after this kind
of degeneration its class breaks up $\beta=\beta[1]+\beta[2]$, while the
boundary marked points mapping to $D$ that are on the lower arc split into three groups $k_0=k_0^L[1]+k[2]+k_0^R[2]$, with $k_0^L[1]$
and $k_0^R[1]$ respectively on the left and right of the attaching point of the disk bubble, while $k[2]$ are on the new disk.
The interior marked points mapping to $B$ split into two groups $l=l[1]+l[2]$. Degenerations inducing different
partitions of the $l$ interior points into groups of size $l[1]$ and $l[2]$ contribute the same chain,
and since there are $l!(l[1]!l[2]!)^{-1}$ such partitions overall one gets the coefficient
$$T^{\omega(\beta)}\frac{1}{l!}\frac{l!}{l[1]!l[2]!}=T^{\omega(\beta[1])}T^{\omega(\beta[2])}\frac{1}{l[1]!l[2]!} \quad .$$

\end{enumerate}

By Gromov compactness, the only other possible degenerations of a disk contributing to $\Theta_{B,D}(Q_1,Q_2,C)$
consist of sphere bubbling, or disk bubbling arising from moving the interior marked points
mapping to $Q_1,Q_2$ all the way up to the upper or lower arcs. None of these degenerations shows
up in $\partial(\Theta_{B,D}(Q_1,Q_2,C))$, because the first has codimension two while the second
violates the geodesic constraint.

\end{proof}

\begin{proposition}\label{PropModuleStructure}
When $D\in\MC^B(L)$ the following properties hold:
\begin{enumerate} 
	\item $[X]\cap_{B,D}[C] = [C]$ ;
	\item $[Q_1]\cap_{B,D}([Q_2]\cap_{B,D}[C]) = ([Q_1]\star_B[Q_2])\cap_{B,D}[C]$ .
\end{enumerate}
\end{proposition}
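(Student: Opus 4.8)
The plan is to deduce both statements from the chain-level identity of Lemma \ref{LemmaBoundaryBMCCapDouble}, passing to homology with the help of Lemma \ref{LemmaDegenerateGeodesicObstruction}. For part (2), I would start with the assumption $D\in\MC^B(L)$, so that $m^0_{B,D}(1)=W^B(D)L$ with $|L|=n$. Feeding $E=m^0_{B,D}(1)$ into the two terms $H^\pm_{B,D}(C,m^0_{B,D}(1),Q_1,Q_2)$ of Lemma \ref{LemmaBoundaryBMCCapDouble} and invoking Lemma \ref{LemmaDegenerateGeodesicObstruction} (with $|E|=n$), these obstruction terms become degenerate chains, so they vanish in homology. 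Working with chain representatives $C,Q_1,Q_2$ that are cycles for the relevant differentials (so $\partial C=0$, $\tilde m^1_{B,D}(C)=0$, and similarly the $\star_B$ inputs are closed), the terms $\Theta_{B,D}(Q_1,Q_2,\partial C)$ and $\Theta_{B,D}(Q_1,Q_2,\tilde m^1_{B,D}(C))$ drop out, and the left-hand side $m^1_{B,D}(\Theta_{B,D}(Q_1,Q_2,C))$ is an exact chain. What remains of the identity reads, up to signs and degenerate chains,
\begin{equation*}
Q_1\cap_{B,D}(Q_2\cap_{B,D}C) + (Q_1\star_B Q_2)\cap_{B,D}C = m^1_{B,D}(\Theta_{B,D}(Q_1,Q_2,C)) \quad .
\end{equation*}
Since the right-hand side is a boundary, the two terms on the left are cohomologous in $\HF^B(L,D)$, which is exactly statement (2) after passing to classes (and checking the sign works out to a plus, i.e.\ that the two cap products agree rather than differ by a sign). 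The key inputs here are that $\cap_{B,D}$ descends to homology (Proposition \ref{PropBMCCapOnHomology}, which I may invoke), and that $\star_B$ descends as well (the proposition following Definition \ref{DefBCup}).

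For part (1), the unitality statement $[X]\cap_{B,D}[C]=[C]$, I would argue directly from Definition \ref{DefBMCCap} using a dimension/degree count together with the fundamental-cycle input. Taking $Q=[X]$ means the interior marked point constrained to map to $Q$ carries no actual constraint, since $X$ is swept out entirely; this is analogous to the forgetful phenomenon underlying Axioms (5)-(6). The expectation is that the only surviving contribution in $[X]\cap_{B,D}[C]$ is the classical term $(\cdot\cap_{B,D}\cdot)_0^{0,0;0}=\cap$, which sends $[X]\cap C$ to $C$ by the classical cap product with the fundamental class, while all $\beta\neq 0$ contributions vanish in homology. Concretely I would combine Lemma \ref{LemmaBMCCapDegree} with the observation that forgetting the vacuous interior constraint to $[X]$ identifies the constrained moduli space with an unconstrained disk moduli space of one dimension higher, so the resulting chains are degenerate for degree reasons, exactly as in the proof of Lemma \ref{LemmaDegenerateGeodesicObstruction}.

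I expect the main obstacle to be the bookkeeping of signs and the precise justification that the nonclassical contributions to part (1) genuinely cancel or degenerate, rather than merely being forced to by a naive count. The associativity in part (2) follows cleanly once Lemma \ref{LemmaBoundaryBMCCapDouble} and the degeneracy of $H^\pm$ are in hand, so the real subtlety is controlling signs in the $A_\infty$ framework so that the surviving relation is $Q_1\cap_{B,D}(Q_2\cap_{B,D}C)=(Q_1\star_B Q_2)\cap_{B,D}C$ with the correct sign, and verifying that in part (1) the $\beta=0$ term reproduces the classical cap product with $[X]$ as the unit without an extraneous sign. A secondary care point is ensuring all chains are chosen to be cycles in the appropriate sense so that the unwanted terms of Lemma \ref{LemmaBoundaryBMCCapDouble} vanish before passing to homology.
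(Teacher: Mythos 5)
Your proposal is correct and follows essentially the same route as the paper: part (2) is deduced from Lemma \ref{LemmaBoundaryBMCCapDouble} together with the degeneracy of $H^\pm_{B,D}(C,m^0_{B,D}(1),Q_1,Q_2)$ from Lemma \ref{LemmaDegenerateGeodesicObstruction} (valid since $D\in\MC^B(L)$ gives $m^0_{B,D}(1)=W^B(D)L$ with $|L|=n$), and part (1) is proved exactly as you outline, by isolating the classical term $(X\cap_{B,D}C)_0^{0,0;0}=X\cap C=C$ and showing each quantum term $(X\cap_{B,D}C)_\beta^{k_0,k_1;l}$, $\beta\neq 0$, equals the unconstrained chain $q^\beta_{1+k_0+k_1,l}(D^{k_0},C,D^{k_1},B^l)$ after removing the vacuous constraint, hence is degenerate by the degree mismatch of Lemma \ref{LemmaBMCCapDegree}. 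The sign caveats you flag are also left implicit in the paper, which states its chain-level identities only up to signs.
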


\begin{proof}

For part (1), separating the classical ($\beta=0$) and quantum ($\beta\neq 0$)
parts of the BMC-deformed cap product in Definition \ref{DefBMCCap} one gets:

$$X\cap_{B,D}C = \sum_{k_0,k_1\geq 0}\sum_{l\geq 0}T^{\omega(\beta)}\frac{1}{l!}(X\cap_{B,D}C)_0^{k_0,k_1;l}
+ \sum_{k_0,k_1\geq 0}\sum_{l\geq 0}\sum_{\substack{\beta\in\pi_2(X,L)\\ \beta\neq 0}}
T^{\omega(\beta)}\frac{1}{l!}(X\cap_{B,D}C)_\beta^{k_0,k_1;l} \quad .$$

In the classical part, $(X\cap_{B,D}C)_0^{k_0,k_1;l}=0$ for $(k_0,k_1,l)\neq (0,0,0)$ and
$(X\cap_{B,D}C)_0^{0,0;0}=X\cap C = C$. Regarding the quantum
part, observe that the constraint of mapping an interior point to $X$ is vacuous, hence
removing it one gets

$$(X\cap_{B,D}C)_\beta^{k_0,k_1;l} = q^\beta_{1+k_0+k_1,l}(D^{k_0},C,D^{k_1},B^l) \quad \textrm{for any} \; \beta\neq 0\quad .$$

When all chains are homogeneous, from Lemma \ref{LemmaBMCCapDegree}
one gets
$$|(X\cap_{B,D}C)_\beta^{k_0,k_1;l}|=|q^\beta_{1+k_0+k_1,l}(D^{k_0},C,D^{k_1},B^l)|+1 \quad ,$$
hence the chain on the left is degenerate for all $\beta\neq 0$.
Part (2) follows from the identity

$$m^1_{B,D}(\Theta_{B,D}(Q_1,Q_2,C))=\Theta_{B,D}(Q_1,Q_2,m^1_{B,D}(C))+$$
$$Q_1\cap_{B,D}(Q_2\cap_{B,D}C)
+(Q_1\star_B Q_2)\cap_{B,D}C + H_{B,D}^\pm(C,m^0_{B,D}(1),Q_1,Q_2)\quad .$$

proved in Lemma \ref{LemmaBoundaryBMCCapDouble} and Lemma \ref{LemmaDegenerateGeodesicObstruction}, which can be used because
$m^0_{B,D}(1)=W^B(D)L$ thanks to the assumption $D\in\MC^B(L)$.
\end{proof}

\ytableausetup{boxsize=1em}
\begin{figure}[H]
  \centering
   %add desired spacing between images, e. g. ~, \quad, \qquad, \hfill etc. 
    %(or a blank line to force the subfigure onto a new line)
    \begin{subfigure}[t]{0.3\textwidth}
        \includegraphics[width=\textwidth]{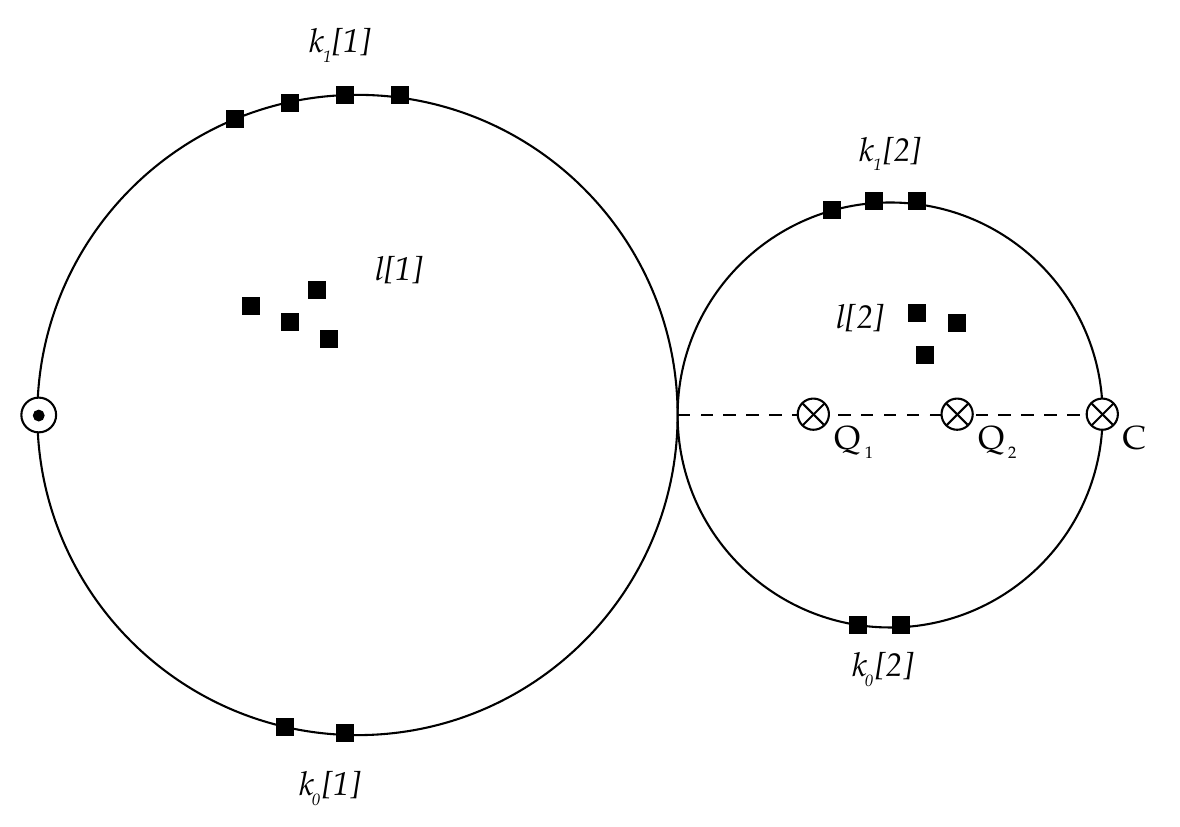}
        \caption{Term (1)}
    \end{subfigure}
    \begin{subfigure}[t]{0.3\textwidth}
        \includegraphics[width=\textwidth]{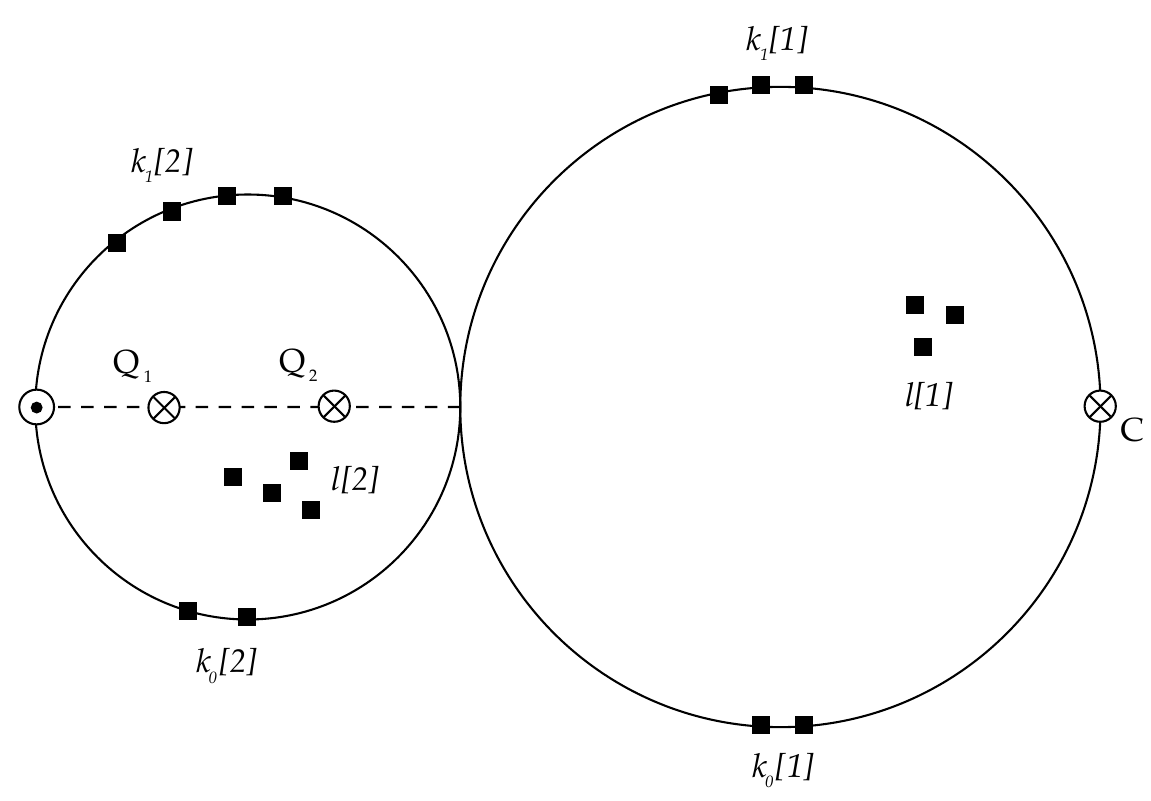}
        \caption{Term (3)}
    \end{subfigure}
    \begin{subfigure}[t]{0.3\textwidth}
        \includegraphics[width=\textwidth]{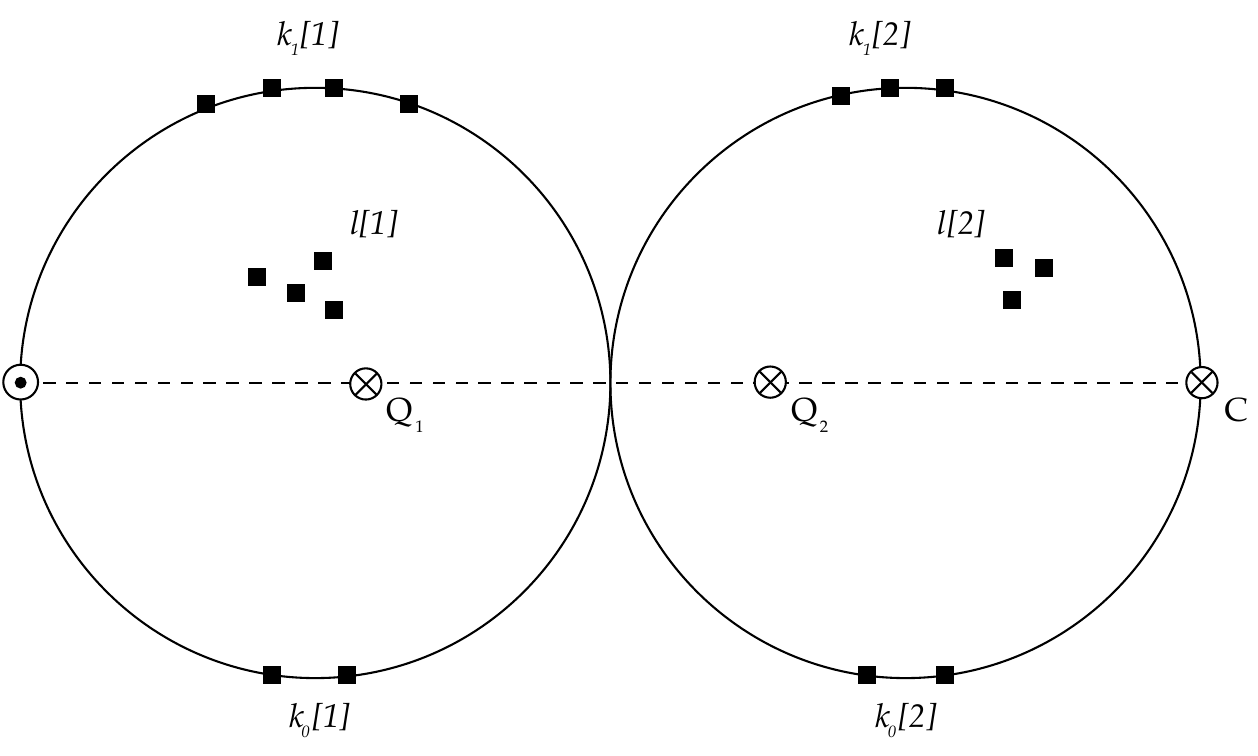}
        \caption{Term (4)}
    \end{subfigure}\\
    \quad\quad
    \begin{subfigure}[t]{0.3\textwidth}
        \includegraphics[width=\textwidth]{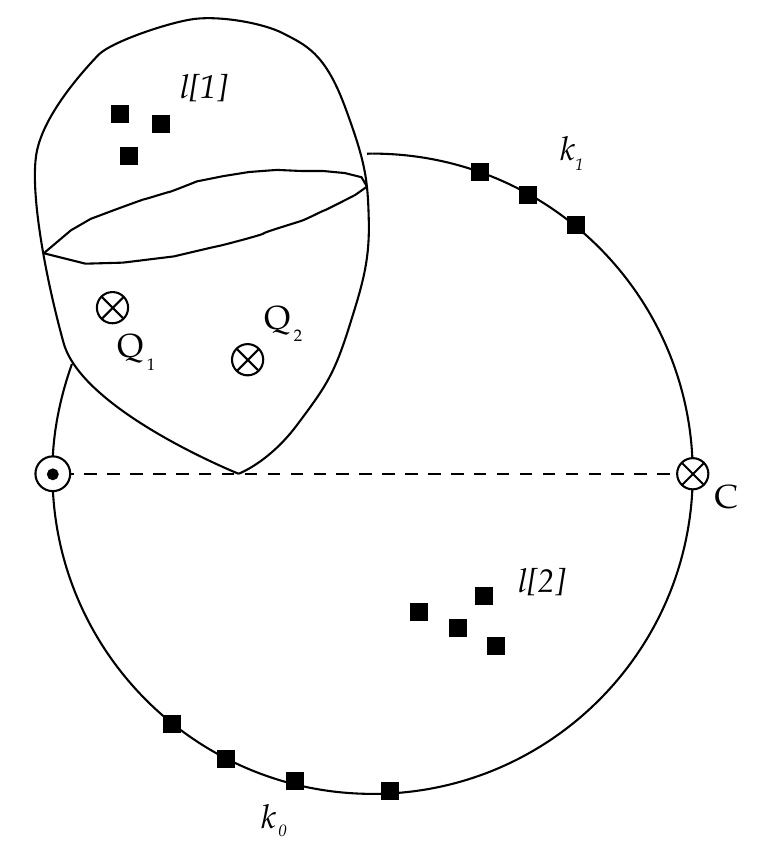}
        \caption{Term (5)}
    \end{subfigure}
    \begin{subfigure}[t]{0.3\textwidth}
        \includegraphics[width=\textwidth]{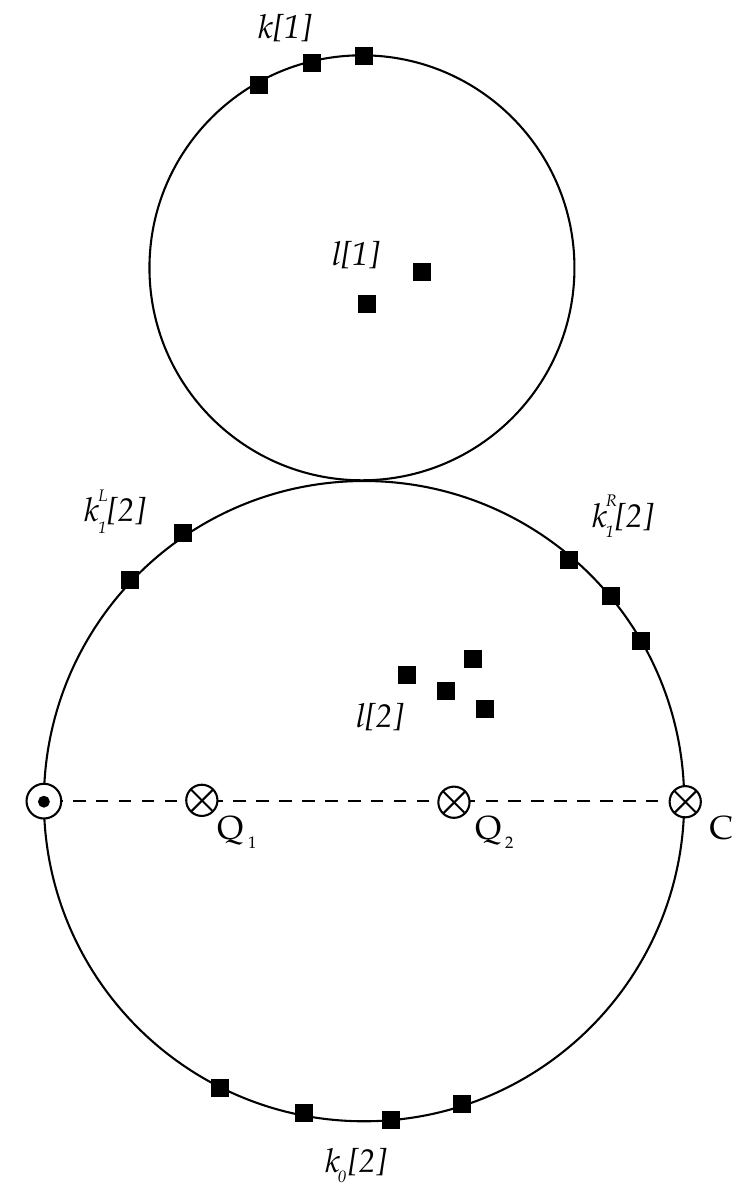}
        \caption{Term (6)}
    \end{subfigure}
    \begin{subfigure}[t]{0.3\textwidth}
        \includegraphics[width=\textwidth]{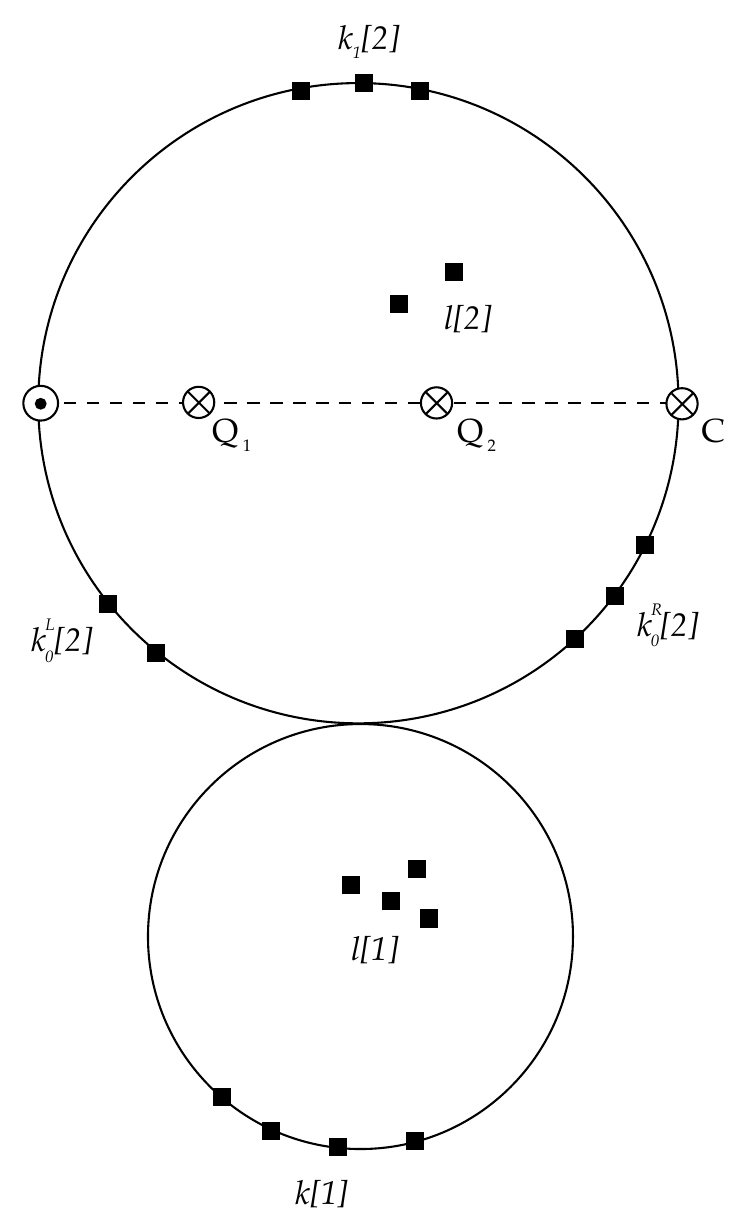}
        \caption{Term (7)}
    \end{subfigure}
    \caption{Degenerations of disks contributing to $\Theta_{B,D}(Q_1,Q_2,C)$.}
    \label{FigBMCCapDoubleDegenerations}
\end{figure}

\section{Curvatures are in the Dubrovin spectrum}\label{SecSpectrum}

This section contains the main result of the article, showing that the curvature of
the $A_\infty$ algebra $\CF^B(L,D)$ is in the spectrum of Dubrovin's operator $K^B$.
The proof makes use of the forgetful axioms (5)-(6) from Section \ref{SecAxioms}.

\begin{notation}
Decompose the chains $D=\sum_iD_i$ and $B=\sum_jB_j$ into homogeneous components,
with indices corresponding to cohomological degrees: $i=n-|D_i|$ and $j=2n-|B_j|$. Denote
$\bi(\bullet)=(i_1,\ldots ,i_\bullet)$ and $\bj(\bullet)=(j_1,\ldots ,j_\bullet)$
unspecified sequences of length $\bullet$ in the set of indices appearing in the sums above.
For a fixed length $\bullet$, sums over all such sequences are denoted
$\sum_{\bi(\bullet)} \quad \textrm{and} \quad \sum_{\bj(\bullet)}$.
Write $D_{\bi(\bullet)}=(D_{i_1},\ldots ,D_{i_\bullet})$ and $B_{\bj(\bullet)}=(B_{j_1},\ldots ,B_{j_\bullet})$
for the corresponding sequences of homogeneous components. Also define the following quantities, called total shifted degrees:
$|\bi(\bullet)|=(i_1-1)+\ldots +(i_\bullet -1)$ and $|\bj(\bullet)|=(j_1-2)+\ldots +(j_\bullet -2)$.
\end{notation}

\begin{lemma}\label{LemmaDegreeQMaps}
$|q^\beta_{l_0,l}(D_{\bi(l_0},B_{\bj(l)})|=n+\mu(\beta)-2-|\bi(l_0)|-|\bj(l)|$
\end{lemma}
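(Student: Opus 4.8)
The plan is to obtain the formula directly from the pull--push degree count, since $q^\beta_{l_0,l}(D_{\bi(l_0)},B_{\bj(l)})$ is by Definition \ref{DefBMCAinftyMaps} the chain
$$q^\beta_{l_0,l}(D_{\bi(l_0)},B_{\bj(l)})=\kappa_{\#}\big(D_{i_1}\times\cdots\times D_{i_{l_0}}\times B_{j_1}\times\cdots\times B_{j_l}\big)$$
obtained from the correspondence $\kappa:L^{l_0}\times X^{l}\rightsquigarrow L$ whose total space is the disk moduli space carrying $l_0$ input boundary marked points, one output boundary marked point, and $l$ interior marked points (the images of the homogeneous pieces $D_{i_a}$ and $B_{j_b}$). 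First I would invoke the pull--push construction following Definition \ref{DefCorrespondence}: along a correspondence with total space $\cM$ and source $M_\otimes$, the map $\kappa_{\#}$ raises chain degree by $\dim\cM-\dim M_\otimes$, so the degree of the output chain is the degree of the input external product plus this shift.

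Next I would compute the three ingredients. The source is $M_\otimes=L^{l_0}\times X^{l}$, hence $\dim M_\otimes=l_0 n+2ln$. The total space is the moduli space of Axiom (2) with $l_0+1$ boundary marked points (the $l_0$ inputs together with the single output) and $l$ interior marked points, so $\dim\cM=n+\mu(\beta)+(l_0+1)+2l-3=n+\mu(\beta)+l_0+2l-2$. For the input I would use that the external product has degree $\sum_{a}|D_{i_a}|+\sum_{b}|B_{j_b}|$, together with the degree--codegree dictionary $|D_{i_a}|=n-i_a$ and $|B_{j_b}|=2n-j_b$ coming from the normalizations $i=n-|D_i|$ and $j=2n-|B_j|$.

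Assembling these gives
$$|q^\beta_{l_0,l}(D_{\bi(l_0)},B_{\bj(l)})|=\Big(\sum_{a=1}^{l_0}(n-i_a)+\sum_{b=1}^{l}(2n-j_b)\Big)+\big(n+\mu(\beta)+l_0+2l-2-l_0 n-2ln\big),$$
and once the $n$--terms cancel this collapses to $n+\mu(\beta)+l_0+2l-2-\sum_a i_a-\sum_b j_b$. Finally I would substitute $\sum_a i_a=|\bi(l_0)|+l_0$ and $\sum_b j_b=|\bj(l)|+2l$ from the definition of the total shifted degrees; the auxiliary summands $l_0$ and $2l$ cancel, leaving $n+\mu(\beta)-2-|\bi(l_0)|-|\bj(l)|$, which is the claim. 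As a sanity check one can run the identical argument with the geodesic moduli space $\cG'$ of Axiom (3) in place of $\cM$ and recover Lemma \ref{LemmaBMCCapDegree} verbatim.

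The computation is routine arithmetic; the only real obstacle --- and the thing I would be most careful about --- is the marked-point bookkeeping, namely remembering to count the output boundary point (so that the index $l_0$ in $q^\beta_{l_0,l}$ becomes $l_0+1$ boundary markings in the dimension formula of Axiom (2)) and keeping the interior markings, which map to $X$ and contribute $2n$ apiece to $\dim M_\otimes$, strictly separate from the boundary markings, which map to $L$ and contribute $n$ apiece. Once these conventions are fixed, no geometric input beyond Axiom (2) is needed.
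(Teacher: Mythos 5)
Your computation is correct and is exactly the routine pull--push degree count that the paper leaves implicit (the lemma is stated there without proof): degree shift $\dim\cM-\dim M_\otimes$ from Definition \ref{DefCorrespondence}, the dimension formula of Axiom (2) with $l_0+1$ boundary marked points, and the codegree dictionary $|D_i|=n-i$, $|B_j|=2n-j$. Your bookkeeping choice of counting the output point (so $\cM_{l_0+1,l}(L,\beta)$ rather than $\cM_{l_0,l}(L,\beta)$, despite the caption of Figure \ref{FigBasicCorrespondences}(B)) is the right reading of the paper's conventions, since it is the one consistent with $q^0_{1,0}=\partial$ having degree $-1$ and, as you note, with recovering Lemma \ref{LemmaBMCCapDegree} from Axiom (3) by the identical argument.
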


\begin{proposition}\label{PropMC}
Suppose that $D\in\MC^B(L)$. Then for any $d\in\ZZ$ with $d\neq n$ one has
$$
		\sum_{l_0,l\geq 0}\sum_{\bi(l_0),\bj(l)}
		\sum_{\substack{\beta\in\pi_2(X,L)\\
		\mu(\beta)=2+d-n
		\\+|\bi(l_0)|+|\bj(l)|}}
		T^{\omega(\beta)}\frac{1}{l!}q^\beta_{l_0,l}(D_{\bi(l_0)},B_{\bj(l)})=0 \quad .
$$
\end{proposition}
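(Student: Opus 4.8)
The plan is to recognize the left-hand side of the claimed identity as precisely the chain-degree-$d$ component of $m^0_{B,D}(1)$, and then to use the Maurer--Cartan hypothesis to see that this component must vanish whenever $d\neq n$.

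First I would expand $m^0_{B,D}(1)$ using Definition \ref{DefBMCAinftyMaps} with zero inputs. Writing out the powers in homogeneous components, $D^{k_0}=\sum_{\bi(k_0)}D_{\bi(k_0)}$ and $B^l=\sum_{\bj(l)}B_{\bj(l)}$, and relabelling $k_0$ as $l_0$, this reads
$$m^0_{B,D}(1)=\sum_{l_0,l\geq 0}\sum_{\bi(l_0),\bj(l)}\sum_{\beta\in\pi_2(X,L)}T^{\omega(\beta)}\frac{1}{l!}\,q^\beta_{l_0,l}(D_{\bi(l_0)},B_{\bj(l)}) \quad .$$
The next step is to sort these contributions by the degree of the output chain in $C(L)$. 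By Lemma \ref{LemmaDegreeQMaps}, the chain $q^\beta_{l_0,l}(D_{\bi(l_0)},B_{\bj(l)})$ has degree $n+\mu(\beta)-2-|\bi(l_0)|-|\bj(l)|$, so it lands in chain-degree exactly $d$ if and only if $\mu(\beta)=2+d-n+|\bi(l_0)|+|\bj(l)|$. This is precisely the Maslov constraint imposed on $\beta$ in the summation of the proposition. Hence, decomposing $m^0_{B,D}(1)\in C(L)\otimes\Lambda$ according to the singular chain grading on $C(L)$ and carrying the Novikov weights $T^{\omega(\beta)}$ along unchanged, the degree-$d$ part of $m^0_{B,D}(1)$ equals exactly the expression whose vanishing is asserted.

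Finally I would invoke the hypothesis $D\in\MC^B(L)$: the projective Maurer--Cartan equation gives $m^0_{B,D}(1)=\lambda L$ for some $\lambda\in\Lambda$ (namely $\lambda=W^B(D)$), a scalar multiple of the fundamental chain $[L]$. Since $[L]\in C_n(L)$ is homogeneous of degree $n=\dim L$, the element $\lambda L$ is concentrated purely in chain-degree $n$, so its component in every degree $d\neq n$ vanishes. Combined with the identification of the previous paragraph, this yields the proposition.

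I do not expect a genuine obstacle here: the entire content is a degree-counting identification. The only point requiring care is the index bookkeeping between the sums in Definition \ref{DefBMCAinftyMaps} and those in the proposition, together with checking that the degree formula of Lemma \ref{LemmaDegreeQMaps} reproduces the Maslov condition $\mu(\beta)=2+d-n+|\bi(l_0)|+|\bj(l)|$ exactly. Once that matching is made, the conclusion is immediate from the fact that $[L]$ is homogeneous of degree $n$ and that the Maurer--Cartan equation forces $m^0_{B,D}(1)$ to be a multiple of it.
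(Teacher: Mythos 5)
Your proposal is correct and follows essentially the same argument as the paper: expand $m^0_{B,D}(1)$ via Definition \ref{DefBMCAinftyMaps} with $k=0$, decompose $D$ and $B$ into homogeneous components, group terms by output chain degree using Lemma \ref{LemmaDegreeQMaps}, and compare with the right-hand side $W^B(D)L$, which is concentrated in degree $n$. The index bookkeeping and the identification of the Maslov constraint with the degree-$d$ condition are exactly as in the paper's proof.
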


\begin{proof}
The assumption $D\in\MC^B(L)$ means that $D$ solves the $B$-deformed weak Maurer-Cartan
equation, i.e. $m^0_{B,D}(1)=W^B(D)L$ for some $W^B(D)\in\Lambda$; see Section \ref{SecBMC}.
By Definition \ref{DefBMCAinftyMaps} with $k=0$, this is the same as
$$\sum_{l_0\geq 0}\sum_{l\geq 0}\sum_{\beta\in\pi_2(X,L)}T^{\omega(\beta)}\frac{1}{l!}
q^\beta_{l_0,l}(D^{l_0},B^l)=W^B(D)L \quad .$$
Breaking up $D$ and $B$ into homogeneous components, this becomes
$$\sum_{l_0\geq 0}\sum_{l\geq 0}\sum_{\beta\in\pi_2(X,L)}
\sum_{\bi(l_0),\bj(l)}T^{\omega(\beta)}\frac{1}{l!}
q^\beta_{l_0,l}(D_{\bi(l_0)},B_{\bj(l)})=W^B(D)L \quad .$$
The degree of the right hand side is $n$, while the left hand side is a sum of terms
of different degrees. Using Lemma \ref{LemmaDegreeQMaps} to group terms with the same
degree $d\in\ZZ$ one gets:
$$\sum_{d\in\ZZ}\sum_{l_0\geq 0}\sum_{l\geq 0}\sum_{\bi(l_0),\bj(l)}
\sum_{\substack{\beta\in\pi_2(X,L)\\
		\mu(\beta)=2+d-n
		\\+|\bi(l_0)|+|\bj(l)|}}
T^{\omega(\beta)}\frac{1}{l!}
q^\beta_{l_0,l}(D_{\bi(l_0)},B_{\bj(l)})=W^B(D)L \quad ,$$
and the claim follows by degree comparison with the right hand side.
\end{proof}

When $\mu(\beta)=2+|\bi(l_0)|+|\bj(l)|$, writing $q^\beta_{l_0,l}(D_{\bi(l_0)},B_{\bj(l)})=
n^\beta_{l_0,l}(D_{\bi(l_0)},B_{\bj(l)})L$ one can interpret $n^\beta_{l_0,l}(D_{\bi(l_0)},B_{\bj(l)})\in\QQ$
as the number of $J$-holomorphic disks of class $\beta$ through a given basepoint of $L$,
with $l_0$ boundary points mapping to $D$ and $l$ interior points mapping to $B$. Examining
the $d=n$ case in the proof of Proposition \ref{PropMC}, one gets a formula for $W^B(D)$
in terms of such numbers.

\begin{corollary}\label{CorBMCDiskPotential}
$$W^B(D)=\sum_{l_0,l\geq 0}\sum_{\bi(l_0),\bj(l)}
		\sum_{\substack{\beta\in\pi_2(X,L)\\
		\mu(\beta)=2\\
		+|\bi(l_0)|+|\bj(l)|}}
		\frac{1}{l!}n^\beta_{l_0,l}(D_{\bi(l_0)},B_{\bj(l)})T^{\omega(\beta)}$$
\end{corollary}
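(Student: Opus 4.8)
The plan is to isolate the degree-$n$ component of the Maurer-Cartan equation and read off $W^B(D)$ as the coefficient of the fundamental chain $L$. First I would recall from the proof of Proposition \ref{PropMC} that the condition $D\in\MC^B(L)$ unpacks, after breaking $D$ and $B$ into homogeneous pieces and regrouping by total degree via Lemma \ref{LemmaDegreeQMaps}, into the chain-level identity
$$\sum_{d\in\ZZ}\sum_{l_0,l\geq 0}\sum_{\bi(l_0),\bj(l)}\sum_{\substack{\beta\in\pi_2(X,L)\\ \mu(\beta)=2+d-n+|\bi(l_0)|+|\bj(l)|}}T^{\omega(\beta)}\frac{1}{l!}q^\beta_{l_0,l}(D_{\bi(l_0)},B_{\bj(l)})=W^B(D)L \quad .$$
Since $L$ has degree $n$, the right-hand side is concentrated in degree $n$, so Proposition \ref{PropMC} is precisely the statement that every $d\neq n$ block on the left vanishes, leaving only the $d=n$ block to analyze.

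Second, I would specialize to $d=n$. By Lemma \ref{LemmaDegreeQMaps}, a term $q^\beta_{l_0,l}(D_{\bi(l_0)},B_{\bj(l)})$ lands in degree $n$ exactly when $\mu(\beta)=2+|\bi(l_0)|+|\bj(l)|$, which is the index condition appearing in the corollary. For such $\beta$ the chain is top-dimensional on the closed $n$-manifold $L$; writing it as $q^\beta_{l_0,l}(D_{\bi(l_0)},B_{\bj(l)})=n^\beta_{l_0,l}(D_{\bi(l_0)},B_{\bj(l)})L$ defines the rational numbers $n^\beta$, interpreted geometrically as the count of class-$\beta$ disks through a fixed basepoint of $L$ with the prescribed boundary and interior incidences. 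Note that the $\beta=0$ contribution to $m^0_{B,D}(1)$ is $\partial D$, which is supported in degrees $<n$ and therefore never enters the $d=n$ block, so this block involves only genuine disk counts.

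Finally, substituting $q^\beta=n^\beta L$ into the $d=n$ block and comparing coefficients of the fundamental chain $L$ on both sides yields the asserted formula for $W^B(D)$, completing the extraction.

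The step I expect to require the most care is the identification $q^\beta_{l_0,l}(D_{\bi(l_0)},B_{\bj(l)})=n^\beta_{l_0,l}(D_{\bi(l_0)},B_{\bj(l)})L$: one must verify that a top-degree chain produced by pull-push (Definition \ref{DefBMCAinftyMaps}) from the relevant zero-dimensional constrained moduli space is genuinely a rational multiple of the fundamental cycle $[L]$, with the multiple equal to the signed count of disks through a basepoint. This is where the dimension count and the submersivity of the boundary evaluation map from Axiom (2) enter, guaranteeing that the output chain sweeps out $L$ with a well-defined degree; everything else is degree bookkeeping already assembled in Proposition \ref{PropMC}.
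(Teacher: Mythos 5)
Your proposal is correct and follows essentially the same route as the paper: the corollary is read off by examining the $d=n$ block in the proof of Proposition \ref{PropMC}, writing the top-degree chains as $q^\beta_{l_0,l}(D_{\bi(l_0)},B_{\bj(l)})=n^\beta_{l_0,l}(D_{\bi(l_0)},B_{\bj(l)})L$, and comparing coefficients of $L$. Your two added remarks --- that the $\beta=0$ term $\partial D$ can never land in the $d=n$ block, and that the identification $q^\beta=n^\beta L$ is the point requiring geometric justification via Axiom (2) --- are sound refinements of details the paper leaves implicit.
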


The following rescaling of the Maurer-Cartan deformation chain $D$ will play a crucial
role in the proof of Theorem \ref{ThmCurvatureInSpectrum}.

\begin{definition}\label{DefDScaled}
For any $D = \sum_iD_i$, its rescaling is $\tD = \sum_i\frac{i-1}{2}D_i$.
\end{definition}

\begin{lemma}\label{LemmaM1DScaled}
$$m^1_{B,D}(\tD)=\sum_{l\geq 0}\sum_{k\geq 0}\sum_{\bi(k)}\sum_{\beta\in\pi_2(X,L)}T^{\omega(\beta)}
\frac{|\bi(k)|}{2}\frac{1}{l!}q^\beta_{k,l}(D_{\bi(k)},B^l)$$
\end{lemma}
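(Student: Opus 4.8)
The plan is to expand $m^1_{B,D}(\tD)$ directly from Definition \ref{DefBMCAinftyMaps} and then reorganize the resulting sum by a bookkeeping argument that redistributes the rescaling weights of Definition \ref{DefDScaled} evenly over all boundary input slots. Concretely, I would start from Definition \ref{DefBMCAinftyMaps} with $d=1$ and input $C=\tD$, which gives
$$m^1_{B,D}(\tD)=\sum_{k_0,k_1\geq 0}\sum_{l\geq 0}\sum_{\beta\in\pi_2(X,L)}T^{\omega(\beta)}\frac{1}{l!}q^\beta_{1+k_0+k_1,l}(D^{k_0},\tD,D^{k_1},B^l).$$

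Next I would break each $D$ in the two groups $D^{k_0}$ and $D^{k_1}$ into homogeneous components and substitute $\tD=\sum_{i^*}\frac{i^*-1}{2}D_{i^*}$ in the distinguished middle slot. This rewrites each term as a sum over index sequences of the operations $q^\beta_{\cdot,l}(D_{\cdot},B^l)$, in which the single distinguished slot carries the scalar $\frac{i^*-1}{2}$ while every other boundary slot carries weight $1$. For a fixed total number of boundary inputs $k=1+k_0+k_1$, I would then reindex by the concatenated sequence $\bi(k)=(i_1,\ldots,i_k)$ together with the position $p=k_0+1$ occupied by the $\tD$-input. As $(k_0,k_1)$ ranges over all decompositions with $k_0+k_1=k-1$, the position $p$ ranges over $\{1,\ldots,k\}$, and for each target sequence $\bi(k)$ and each $p$ there is exactly one contributing term, producing the same chain $q^\beta_{k,l}(D_{\bi(k)},B^l)$ with attached scalar $\frac{i_p-1}{2}$.

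Summing the scalars over $p$ then yields
$$\sum_{p=1}^k\frac{i_p-1}{2}=\frac{1}{2}\sum_{p=1}^k(i_p-1)=\frac{|\bi(k)|}{2},$$
which is exactly the coefficient in the claimed formula; collecting the $\beta$ and $l$ sums (which are untouched by the reorganization) finishes the identity.

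Since the argument is pure bookkeeping, I would not expect a serious obstacle. The one point to treat with care — and what I regard as the main subtlety — is verifying that no Koszul sign is generated by this redistribution, so that the stated equality holds on the nose rather than merely up to signs. This is because we never permute inputs: for a fixed sequence $\bi(k)$ the chain $q^\beta_{k,l}(D_{\bi(k)},B^l)$ is one well-defined object, and the reindexing only collects scalar coefficients in front of identical chains. Thus the weight $\frac{i^*-1}{2}$ landing on slot $p$ multiplies the very same chain as a weight landing on any other slot, and their sum is unambiguous, giving the clean coefficient $\frac{|\bi(k)|}{2}$.
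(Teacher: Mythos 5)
Your proposal is correct and follows essentially the same route as the paper's proof: expand $m^1_{B,D}(\tD)$ via Definition \ref{DefBMCAinftyMaps}, decompose all boundary inputs into homogeneous components, and reindex the sum over insertion positions so that the weights $\frac{i-1}{2}$ accumulate to $\frac{|\bi(k)|}{2}$ on each fixed chain $q^\beta_{k,l}(D_{\bi(k)},B^l)$. The only cosmetic difference is that the paper groups by $k=l_0+l_1$ and shifts the index at the end (invoking $|\bi(0)|=0$), whereas you index directly by the total $k=1+k_0+k_1$; these are the same bookkeeping.
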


\begin{proof}
Using Definition \ref{DefBMCAinftyMaps} with $k=1$ and Definition \ref{DefDScaled} 
$$m^1_{B,D}(\tD)=
\sum_i\sum_{l_0,l_1}\sum_{l\geq 0}\sum_{\beta\in\pi_2(X,L)}
T^{\omega(\beta)}\frac{i-1}{2}\frac{1}{l!}q^\beta_{1+l_0+l_1,l}(D^{l_0},D_i,D^{l_1},B^l)=$$
$$\sum_i\sum_{k\geq 0}\sum_{l_0+l_1=k}\sum_{l\geq 0}\sum_{\beta\in\pi_2(X,L)}
T^{\omega(\beta)}\frac{i-1}{2}\frac{1}{l!}q^\beta_{1+k,l}(D^{l_0},D_i,D^{l_1},B^l) \quad .$$
Breaking up $D$ into homogeneous components and rearranging the sum one gets
$$m^1_{B,D}(\tD)=\sum_{\beta\in\pi_2(X,L)}T^{\omega(\beta)}\sum_{l\geq 0}\frac{1}{l!}\sum_{k\geq 0}\sum_{\bi(k)}\sum_i\sum_{l_0+l_1=k}
\frac{i-1}{2}q^\beta_{1+k,l}(D_{i_1},\ldots ,D_{i_{l_0}},D_i,D_{i_{l_0+1}},\ldots ,D_{i_k},B^l) \quad .$$
Observe that for any $k\geq 0$ one can rewrite
$$\sum_{\bi(k)}\sum_i\sum_{l_0+l_1=k}
\frac{i-1}{2}q^\beta_{1+k,l}(D_{i_1},\ldots ,D_{i_{l_0}},D_i,D_{i_{l_0+1}},\ldots ,D_{i_k},B^l)=
\sum_{\bi(1+k)}\frac{|\bi(1+k)|}{2}q^\beta_{1+k,l}(D_{\bi(1+k)},B^l) \quad ,$$
thus obtaining
$$m^1_{B,D}(\tD)=\sum_{\beta\in\pi_2(X,L)}T^{\omega(\beta)}\sum_{l\geq 0}\frac{1}{l!}
\sum_{k\geq 0}\sum_{\bi(1+k)}\frac{|\bi(1+k)|}{2}q^\beta_{1+k,l}(D_{\bi(1+k)},B^l) \quad .$$
The last expression is the one claimed after shifting the index $k$, because
$|\bi(0)|=0$.
\end{proof}

\begin{proposition}\label{PropKCapL}
For any $D\in\MC^B(L)$, one has $K^B \cap_{B,D} [L] = W^B(D)[L]$ .
\end{proposition}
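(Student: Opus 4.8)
The plan is to prove the sharper chain-level identity
$$K^B\cap_{B,D}L = W^B(D)\,L \pm m^1_{B,D}(\tD),$$
where $\tD$ is the rescaled Maurer--Cartan chain of Definition \ref{DefDScaled}, and then pass to homology. Since $D\in\MC^B(L)$, Proposition \ref{PropBMCHF} gives $(m^1_{B,D})^2=0$, so $m^1_{B,D}(\tD)$ is a boundary in $\CF^B(L,D)$ and drops out, yielding $K^B\cap_{B,D}[L]=W^B(D)[L]$. As flagged in the introduction, the point is that this discrepancy is genuinely nonzero at the chain level, so the identity holds only in homology at this generality.

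First I would use linearity of $\cap_{B,D}$ in its first slot to write
$$K^B\cap_{B,D}L = c_1\cap_{B,D}L - \sum_{j\neq 2}\frac{j-2}{2}\,B_j\cap_{B,D}L,$$
and then reduce each piece with the forgetful axioms. Representing $c_1$ by a cycle $Z$ with $L\subset X\setminus Z$ and $[Z]^\vee=c_1$, Axiom (5) lets me re-index the double sum over $(k_0,k_1)$ by $k=k_0+k_1$ and rewrite the first term as $\sum_{k,l,\beta}T^{\omega(\beta)}\frac{1}{l!}\frac{\mu(\beta)}{2}q^\beta_{k,l}(D^k,B^l)$. Axiom (6) does the analogous reduction of each $B_j$-term, producing $q$-operations carrying one extra interior insertion $B_j$ among the $l$ copies of $B$. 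The combinatorial heart of this step is to break $B$ and $D$ into homogeneous components and symmetrize the weight $\frac{j-2}{2}$ over which of the $l+1$ interior insertions is the distinguished $B_j$: since the interior marked points are interchangeable and $\frac{j-2}{2}$ vanishes at $j=2$, averaging converts $-\sum_{j\neq 2}\frac{j-2}{2}(B_j\cap_{B,D}L)$ into $-\sum_{k,l,\bi(k),\bj(l),\beta}T^{\omega(\beta)}\frac{1}{l!}\frac{|\bj(l)|}{2}q^\beta_{k,l}(D_{\bi(k)},B_{\bj(l)})$, the total shifted degree $|\bj(l)|=\sum_b(j_b-2)$ emerging from this average (now summing over sequences $\bj(l)$ of all interior labels). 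Combining with the $c_1$-term gives
$$K^B\cap_{B,D}L=\sum_{k,l\geq 0}\sum_{\bi(k),\bj(l)}\sum_{\beta}T^{\omega(\beta)}\frac{1}{l!}\,\frac{\mu(\beta)-|\bj(l)|}{2}\,q^\beta_{k,l}(D_{\bi(k)},B_{\bj(l)}).$$

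Next I would split the coefficient as $\frac{\mu(\beta)-|\bj(l)|}{2}=\frac{|\bi(k)|}{2}+\frac{\mu(\beta)-|\bi(k)|-|\bj(l)|}{2}$. The first summand reproduces exactly the expression for $m^1_{B,D}(\tD)$ computed in Lemma \ref{LemmaM1DScaled}. For the remainder I would group terms by the integer $d$ defined through $\mu(\beta)=2+d-n+|\bi(k)|+|\bj(l)|$, so that $\frac{\mu(\beta)-|\bi(k)|-|\bj(l)|}{2}=\frac{d-n+2}{2}$ is constant on each group. Each degree-$d$ group is precisely the sum appearing in Proposition \ref{PropMC}, which vanishes for every $d\neq n$; only $d=n$ survives, with coefficient $\frac{n-n+2}{2}=1$, and the $d=n$ group equals $W^B(D)L$ by the identification from the proof of Proposition \ref{PropMC} and Corollary \ref{CorBMCDiskPotential}. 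This gives the claimed chain-level identity.

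I expect the main obstacle to be the recognition that the chain-level discrepancy between $K^B\cap_{B,D}L$ and $W^B(D)L$ is exactly the boundary $m^1_{B,D}(\tD)$: the rescaling weights $\frac{i-1}{2}$ in $\tD$ are engineered precisely so that $\frac{|\bi(k)|}{2}$ peels off and the leftover coefficient becomes the affine function $\frac{d-n+2}{2}$ of the grouping index, which is what lets Proposition \ref{PropMC} annihilate all degrees except $d=n$. The secondary difficulties are bookkeeping: tracking the symmetrization factors when passing between the forgetful-axiom formulas (stated with $D^k,B^l$) and their homogeneous decompositions, and keeping the Koszul signs consistent, which I would handle exactly as in the proofs of Proposition \ref{PropBMCCapOnHomology} and Lemma \ref{LemmaBoundaryBMCCapDouble}.
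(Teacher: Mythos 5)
Your proposal is correct and follows essentially the same route as the paper's proof: exhibit $\tD$ as a chain-level primitive of $K^B\cap_{B,D}L - W^B(D)L$ by combining the forgetful axioms (5)--(6) with Lemma \ref{LemmaM1DScaled}, the degree-wise vanishing of Proposition \ref{PropMC}, and Corollary \ref{CorBMCDiskPotential}. The only difference is organizational — you apply Axiom (6) and the symmetrization over interior insertions upfront, folding the $B_j$-terms into a single coefficient $\frac{\mu(\beta)-|\bj(l)|}{2}$ before splitting off $\frac{|\bi(k)|}{2}$, whereas the paper subtracts $m^0_{B,D}(1)$ first and matches the $\frac{|\bj(l)|}{2}$-weighted terms against $\sum_j\frac{j-2}{2}B_j\cap_{B,D}L$ at the end; the two bookkeepings are algebraically identical.
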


\begin{proof}
Assume that $L\subset X\setminus Z$ for some divisor $Z$ Poincar\'{e} dual to a lift
of $c_1$ in $H^2(X,L)$, so that $\beta\cdot[Z]=\mu(\beta)/2$ for every $\beta\in\pi_2(X,L)$.
Decomposing the bulk-deformation cycle $B=\sum_jB_j$ into homogeneous components
$$K^B\cap_{B,D}[L]=[Z]\cap_{B,D}[L]-\sum_{j}\frac{j-2}{2}[B_j]\cap_{B,D}[L] \quad .$$
Recall that $m^0_{B,D}(1)=W^B(D)L$ thanks to the assumption
$D\in\MC^B(L)$, so it suffices to check that $K^B\cap_{B,D}L-m^0_{B,D}(1)$ is $m^1_{B,D}$-exact
at the chain level. We verify that the rescaled chain $\tD$ (Definition \ref{DefDScaled})
is a primitive. In the definition of BMC-deformed cap product (Definition \ref{DefBMCCap}),
one can group together contributions with the same number $l_0+l_1=k$ of boundary points
mapping to $D$ and get
$$Z\cap_{B,D}L = \sum_{l_0,l_1\geq 0}\sum_{l\geq 0}\sum_{\beta\in\pi_2(X,L)}T^{\omega(\beta)}\frac{1}{l!}(Z\cap_{B,D}L)^{l_0,l_1;l}_\beta$$
$$=\sum_{\beta\in\pi_2(X,L)}T^{\omega(\beta)}\sum_{l\geq 0}\frac{1}{l!}\sum_{k\geq 0}\sum_{l_0+l_1=k}(Z\cap_{B,D}L)^{l_0,l_1;l}_\beta \quad .$$
From the axioms of Section \ref{SecAxioms}, removing the vacuous boundary constraint of
an input marked point mapping to $L$ yields an equality of chains
$$\sum_{l_0+l_1=k}(Z\cap_{B,D}L)_\beta^{l_0,l_1;l}=\frac{\mu(\beta)}{2}q^\beta_{k,l}(D^k,B^l) \quad .$$
Using this one can rewrite
$$Z\cap_{B,D}L=\sum_{k\geq 0}\sum_{l\geq 0}\sum_{\beta\in\pi_2(X,L)}T^{\omega(\beta)}\frac{\mu(\beta)}{2}\frac{1}{l!}q^\beta_{k,l}(D^k,B^l) \quad .$$
Definition \ref{DefBMCAinftyMaps} with $k=0$ then gives
$$K^B\cap_{B,D}L-m^0_{B,D}(1)=\sum_{k\geq 0}\sum_{l\geq 0}\sum_{\beta\in\pi_2(X,L)}T^{\omega(\beta)}
\left(\frac{\mu(\beta)}{2}-1\right)\frac{1}{l!}q^\beta_{k,l}(D^k,B^l) - \sum_j\frac{j-2}{2}B_j\cap_{B,D}L\quad .$$
For each $k\geq 0$ and $l\geq 0$, break up $D=\sum_iD_i$ into homogenous components and isolate the contributions
of disk classes $\beta\in\pi_2(X,L)$ with Maslov index $\mu(\beta)=2+|\bi(k)|+|\bj(l)|$, which are the
ones contributing to $W^B(D)$ by Corollary \ref{CorBMCDiskPotential}:
$$K^B\cap_{B,D}L-m^0_{B,D}(1)=$$
$$\underbracket{\sum_{k\geq 0}\sum_{l\geq 0}\sum_{\bi(k)}\sum_{\bj(l)}
\sum_{\substack{\beta\in\pi_2(X,L)\\ \mu(\beta)=2+|\bi(k)|+|\bj(l)|}}T^{\omega(\beta)}\frac{|\bi(k)|}{2}\frac{1}{l!}
q^\beta_{k,l}(D_{\bi(k)},B_{\bj(l)})}_\textrm{(1)}$$
$$ +
\underbracket{\sum_{k\geq 0}\sum_{l\geq 0}\sum_{\bi(k)}\sum_{\bj(l)}
\sum_{\substack{\beta\in\pi_2(X,L)\\ \mu(\beta)=2+|\bi(k)|+|\bj(l)|}}T^{\omega(\beta)}\frac{|\bj(k)|}{2}\frac{1}{l!}
q^\beta_{k,l}(D_{\bi(k)},B_{\bj(l)})}_\textrm{(2)}$$
$$ +
\underbracket{\sum_{k\geq 0}\sum_{l\geq 0}\sum_{\bi(k)}\sum_{\bj(l)}
\sum_{\substack{\beta\in\pi_2(X,L)\\ \mu(\beta)\neq 2+|\bi(k)|+|\bj(l)|}}T^{\omega(\beta)}\left(\frac{\mu(\beta)}{2}-1\right)\frac{1}{l!}
q^\beta_{k,l}(D_{\bi(k)},B_{\bj(l)})}_\textrm{(3)}$$
$$ -
\underbracket{\sum_j\frac{j-2}{2}B_j\cap_{B,D}L}_\textrm{(4)} \quad .$$
Term (1) resembles $m^1_{B,D}(\tD)$ from
Lemma \ref{LemmaM1DScaled}, but only includes the contributions of disk classes
$\beta\in\pi_2(X,L)$ with Maslov index $\mu(\beta)=2+|\bi(k)|+|\bj(l)|$. Term (3) can be rewritten as
$$(3) = \sum_{\substack{d\in\ZZ\\ d\neq n}}
\sum_{k\geq 0}\sum_{l\geq 0}\sum_{\bi(k)}\sum_{\bj(l)}
\smashoperator[r]{\sum_{\substack{\beta\in\pi_2(X,L)\\ \mu(\beta)= 2+d-n\\+|\bi(k)|+|\bj(l)|}}}T^{\omega(\beta)}
\left(\frac{d-n}{2}+\frac{|\bi(k)|}{2}+\frac{|\bj(l)|}{2} \right)\frac{1}{l!}q^\beta_{k,l}(D_{\bi(k)},B_{\bj(l)})=$$
$$
\sum_{\substack{d\in\ZZ\\ d\neq n}}
\frac{d-n}{2}
\underbracket{\left(\sum_{k\geq 0}\sum_{l\geq 0}\sum_{\bi(k)}\sum_{\bj(l)}
\smashoperator[r]{\sum_{\substack{\beta\in\pi_2(X,L)\\ \mu(\beta)= 2+d-n\\+|\bi(k)|+|\bj(l)|}}}T^{\omega(\beta)}
\frac{1}{l!}q^\beta_{k,l}(D_{\bi(k)},B_{\bj(l)})\right)}_\textrm{(3a)}$$
$$+
\underbracket{\sum_{\substack{d\in\ZZ\\ d\neq n}}
\left(\sum_{k\geq 0}\sum_{l\geq 0}\sum_{\bi(k)}\sum_{\bj(l)}
\smashoperator[r]{\sum_{\substack{\beta\in\pi_2(X,L)\\ \mu(\beta)= 2+d-n\\+|\bi(k)|+|\bj(l)|}}}T^{\omega(\beta)}
\frac{|\bi(k)|}{2}\frac{1}{l!}q^\beta_{k,l}(D_{\bi(k)},B_{\bj(l)})\right)}_\textrm{(3b)}$$
$$+
\underbracket{\sum_{\substack{d\in\ZZ\\ d\neq n}}
\left(\sum_{k\geq 0}\sum_{l\geq 0}\sum_{\bi(k)}\sum_{\bj(l)}
\smashoperator[r]{\sum_{\substack{\beta\in\pi_2(X,L)\\ \mu(\beta)= 2+d-n\\+|\bi(k)|+|\bj(l)|}}}T^{\omega(\beta)}
\frac{|\bj(l)|}{2}\frac{1}{l!}q^\beta_{k,l}(D_{\bi(k)},B_{\bj(l)})\right)}_\textrm{(3c)} \quad . $$
Observe that $(3a)=0$ thanks to the assumption
$D\in\MC^B(L)$ and Proposition \ref{PropMC}, and that $(1)+(3b)=m^1_{B,D}(\tD)$, so that overall
$$K^B\cap_{B,D}L-m^0_{B,D}(1)=(1)+(2)+(3a)+(3b)+(3c)+(4)=$$
$$m^1_{B,D}(\tD)
+\underbracket{
\sum_{k\geq 0}\sum_{l\geq 0}\sum_{\bi(k)}\sum_{\bj(l)}
\sum_{\beta\in\pi_2(X,L)}T^{\omega(\beta)}
\frac{|\bj(l)|}{2}\frac{1}{l!}q^\beta_{k,l}(D_{\bi(k)},B_{\bj(l)})}_\textrm{(2)+(3c)}
-\underbracket{\sum_j\frac{j-2}{2}B_j\cap_{B,D}L}_\textrm{(4)} \quad .$$
To conclude, it suffices to check that $(2)+(3c)=(4)$. Using the definition of BMC-deformed
cap product (Definition \ref{DefBMCCap}) and grouping together the terms with the same
number $l_0+l_1=k$ of input boundary marked points mapping to $D$ one gets
$$(4) = \sum_{\beta\in\pi_2(X,L)}T^{\omega(\beta)}
\sum_j\frac{j-2}{2}\sum_{l\geq 0}\frac{1}{l!}\sum_{k\geq 0}\sum_{l_0+l_1=k}(B_j\cap_{B,D}L)^{l_0,l_1;l}_\beta \quad.$$
From the axioms of Section \ref{SecAxioms}, removing the vacuous boundary constraint of
an input boundary marked point mapping to $L$ yields an equality of chains
$$\sum_{l_0+l_1=k}(B_j\cap_{B,D}L)^{l_0,l_1;l}_\beta=\frac{1}{l+1}\sum_{t=0}^lq^\beta_{k,l+1}(D^k,B^t,B_j,B^{l-t}) \quad ,$$
and substitution in the previous equation gives
$$(4)=
\sum_{\beta\in\pi_2(X,L)}T^{\omega(\beta)}
\sum_{k\geq 0}\sum_{l\geq 0}\frac{1}{(l+1)!}\sum_j\frac{j-2}{2}\sum_{t=0}^lq^\beta_{k,l+1}(D^k,B^t,B_j,B^{l-t}) \quad .$$
Breaking up $B$ into homogeneous components yields
$$(4)=
\sum_{\beta\in\pi_2(X,L)}T^{\omega(\beta)}
\sum_{k\geq 0}\sum_{l\geq 0}\frac{1}{(l+1)!}\sum_{\bj(l)}\sum_j\frac{j-2}{2}\sum_{t=0}^lq^\beta_{k,l+1}(D^k,B_{\bj(t)},B_j,B_{\bj(l-t)}) \quad ,$$
and rewriting the sum
$$\sum_{\bj(l)}\sum_j\sum_{t=0}^l\frac{j-2}{2}q^\beta_{k,l+1}(D^k,B_{\bj(t)},B_j,B_{\bj(l-t)})=
\sum_{\bj(l+1)}\frac{|\bj(l+1)|}{2}q^\beta_{k,l+1}(D^k,B_{\bj(l+1)})$$
one gets
$$(4)=\sum_{\beta\in\pi_2(X,L)}T^{\omega(\beta)}
\sum_{k\geq 0}\sum_{l\geq 0}\frac{1}{(l+1)!}\sum_{\bj(l+1)}\frac{|\bj(l+1)|}{2}q^\beta_{k,l+1}(D^k,B_{\bj(l+1)}) \quad ,$$
which is precisely $(2)+(3c)$ after a shift of the index $l$, using the fact that $\bj(0)=0$.

\end{proof}

\begin{remark}
The fact that $Z\cap_{B,D}L-m^0_{B,D}(1)$ is only exact, as opposed to zero,
is a major difference between Maurer-Cartan deformations where $D$ has cohomological
degree 1 and general ones.
\end{remark}

\begin{theorem}\label{ThmCurvatureInSpectrum}
For any $D\in\MC^B(L)$, $\HF^B(L,D)\neq 0$ implies that $W^B_L(D)$ is an eigenvalue of
the Dubrovin operator $K^B \star_B - : \QH^B(X) \to \QH^B(X)$ .
\end{theorem}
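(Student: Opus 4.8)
The plan is to reinterpret everything as a statement in linear algebra over $\Lambda$. By Proposition \ref{PropBMCCapOnHomology} and Proposition \ref{PropModuleStructure}, the BMC-deformed cap product $\cap_{B,D}$ turns $\HF^B(L,D)$ into a unital module over the finite-dimensional $\Lambda$-algebra $\QH^B(X)$. First I would record this as a unital algebra homomorphism
$$\rho : \QH^B(X) \longrightarrow \operatorname{End}_\Lambda(\HF^B(L,D)), \qquad \rho(Q) = Q\cap_{B,D}- ,$$
where the identities $\rho([X])=\operatorname{id}$ and $\rho(Q_1\star_B Q_2)=\rho(Q_1)\circ\rho(Q_2)$ are precisely the two properties established in Proposition \ref{PropModuleStructure}.

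Next I would invoke Proposition \ref{PropKCapL}, which gives $K^B\cap_{B,D}[L]=W^B(D)[L]$; in the language above this says that the unit $[L]$ is an eigenvector of the endomorphism $\rho(K^B)$ with eigenvalue $W^B(D)=W^B_L(D)$. The hypothesis $\HF^B(L,D)\neq 0$ ensures that $[L]\neq 0$, so this eigenvector is nonzero.

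It then remains to transfer the eigenvalue from $\rho(K^B)$ back to the Dubrovin operator $T=K^B\star_B-$ acting on $\QH^B(X)$. Let $m(t)\in\Lambda[t]$ be the minimal polynomial of $T$. Evaluating the identity $m(T)=0$ on the unit $[X]$ gives $m(K^B)=0$ inside the algebra $\QH^B(X)$, where $m(K^B)$ is computed using the product $\star_B$. Applying the algebra map $\rho$ and using unitality yields $m(\rho(K^B))=\rho(m(K^B))=0$ in $\operatorname{End}_\Lambda(\HF^B(L,D))$. Evaluating this endomorphism on $[L]$, and using $\rho(K^B)^i[L]=W^B(D)^i[L]$, produces $m(W^B(D))[L]=0$; since $[L]\neq 0$ this forces $m(W^B(D))=0$. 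Hence $W^B(D)$ is a root of the minimal polynomial of $T$, and as it already lies in the base field $\Lambda$ it is a genuine eigenvalue, with eigenvector inside $\QH^B(X)$ itself.

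The substantive work has all been done in Proposition \ref{PropKCapL}, so I expect no serious obstacle in this final step. The only point demanding care is the passage through the minimal polynomial: it is the unitality of $\rho$ that licenses pushing $m(K^B)$ to $m(\rho(K^B))$, and one should note that a root of $m$ lying in $\Lambda$ automatically yields an eigenvector over $\Lambda$ rather than merely in an algebraic closure.
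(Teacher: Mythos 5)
Your proposal is correct, and it rests on exactly the same pillars as the paper's proof: the module structure from Propositions \ref{PropBMCCapOnHomology} and \ref{PropModuleStructure}, the eigenvector identity $K^B\cap_{B,D}[L]=W^B(D)[L]$ from Proposition \ref{PropKCapL}, and the fact that $[L]$ is the unit of $\HF^B(L,D)$ (Proposition \ref{PropBMCHF}), so that $\HF^B(L,D)\neq 0$ forces $[L]\neq 0$. Where you diverge is the final linear-algebra step. The paper argues by contradiction: if $W^B(D)$ were not an eigenvalue, then $(K^B-W^B(D)[X])\star_B-$ would be invertible on the finite-dimensional space $\QH^B(X)$, so some $[Q]$ satisfies $(K^B-W^B(D)[X])\star_B[Q]=[X]$; capping the identity $(K^B-W^B(D)[X])\cap_{B,D}[L]=0$ with $[Q]$ and using the module axioms then yields $[L]=[X]\cap_{B,D}[L]=0$, a contradiction. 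You instead package the module structure as a unital algebra map $\rho:\QH^B(X)\to\operatorname{End}_\Lambda(\HF^B(L,D))$ and push the minimal polynomial of $T=K^B\star_B-$ through $\rho$ to conclude $m(W^B(D))=0$, then use the standard factorization $m(t)=(t-W^B(D))g(t)$ with $g(T)\neq 0$ to produce an eigenvector over $\Lambda$. Both arguments need $\dim_\Lambda\QH^B(X)<\infty$ (the paper for ``non-eigenvalue implies invertible,'' you for the existence of the minimal polynomial), and both are complete. The paper's route is shorter and avoids any mention of minimal polynomials; yours makes the general principle more transparent --- any nonzero unital module over $\QH^B(X)$ on which $K^B$ acts by a scalar constrains that scalar to the spectrum --- and it explicitly handles the point that a root of $m$ lying in $\Lambda$ gives an eigenvector over $\Lambda$ itself rather than over an extension, a subtlety the paper's invertibility phrasing sidesteps automatically.
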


\begin{proof}
The assumption $D\in\MC^B(L)$ guarantees that the curvature $W^B(D)\in\Lambda$ is defined,
and the BMC-deformed cap product of chains descends to a map on homology
$\cap_{B,D}:\QH^B(X)\otimes\HF^B(L,D)\to\HF^B(L,D)$ (Proposition \ref{PropBMCCapOnHomology}).
The curvature $W^B(D)$ is an eigenvalue of $K^B \star_B - : \QH^B(X) \to \QH^B(X)$ if and only if
$(K^B-W^B(D)[X])\star_B - : \QH^B(X) \to \QH^B(X)$ is not invertible. Assume by contradiction
that this is not the case. Then such operator is surjective, and there is some $[Q]\in\QH^B(X)$
such that $(K^B-W^B(D)[X])\star_B[Q]=[X]$. Observe that $(K^B-W^B(D)[X])\cap_{B,D}[L]=0$
(Proposition \ref{PropModuleStructure} and \ref{PropKCapL}). Capping
this equation by $[Q]$ and using again the module structure of $\HF^B(L,D)$ over $\QH^B(X)$
(Proposition \ref{PropModuleStructure}) one gets
$$0 = [Q]\cap_{B,D}((K^B-W^B(D)[X])\cap_{B,D}[L]) =$$
$$=([Q]\star_B (K^B - W^B(D)[X]))\cap_{B,D}[L]=
[X]\cap_{B,D} [L] = [L] \quad .$$
This is against the fact $\HF^B(L,D)\neq 0$ by assumption, because $[L]$ is the unit of
this algebra (Proposition \ref{PropBMCHF}).
\end{proof}

\section{Decoupling phenomenon}\label{SecDecoupling}

This section discusses an example where the spectrum of some finite energy truncation
of the Dubrovin operator $K^B$ decouples for explicit bulk parameters $B$, and explains
how this is related to decoupling of the full Dubrovin operator.

\subsection{Classical cohomology}

Let $X=\Gr(2,4)$ be the Grassmannians of complex planes in $\CC^4$.
This is a $4$-dimensional smooth Fano variety, that can be realized as a quadric
hypersurface in $\PP^5$ under the Pl\"{u}cker embedding. As symplectic structure $\omega$
take the Fubini-Study form on $\PP^5$ restricted to $X$.
The cohomology $\H(X;\CC)$ has dimension $6$ and is concentrated in even degrees,
with a basis given by classes $\sigma_d=\operatorname{PD}([X_d])$ Poincar\'{e} dual
to Schubert cycles $X_d\subset\Gr(2,4)$. The labels $d\subset 2\times 2$ are Young diagrams
in a grid with two rows and two columns. The codegree $|X_d|^\vee=|\sigma_d|$ is twice
the number of boxes in the corresponding Young diagram $d$, and we order the basis elements
as follows:
\vspace{0.3em}
\ytableausetup{boxsize=0.3em}
\begin{center}
\begin{tabular}{ c | c | c | c | c | c | c }
degree & 0 & 2 & 4 & 4 & 6 & 8 \\
\hline
class & $\sigma_0=\sigma_{\emptyset}$ & $\sigma_1=\sigma_{\ydiagram{1,0}}$ & $\sigma_2=\sigma_{\ydiagram{2,0}}$ & $\sigma_3=\sigma_{\ydiagram{1,1}}$ & $\sigma_4=\sigma_{\ydiagram{2,1}}$ & $\sigma_5=\sigma_{\ydiagram{2,2}}$
\end{tabular}
\end{center}
\vspace{0.3em}
The first Chern class of $X$ is $c_1 = 4\sigma_{\ydiagram{1,0}}$. Cup products of Schubert
classes can be computed combinatorially from their Young diagrams, using the
Littlewood-Richardson rule. We record for future reference the intersection matrix $g$
of $X$ and note that $g=g^{-1}$:

$$
\begin{pmatrix}
0 & 0 & 0 & 0 & 0 & 1\\
0 & 0 & 0 & 0 & 1 & 0\\
0 & 0 & 1 & 0 & 0 & 0\\
0 & 0 & 0 & 1 & 0 & 0\\
0 & 1 & 0 & 0 & 0 & 0\\
1 & 0 & 0 & 0 & 0 & 0
\end{pmatrix}
$$

\subsection{Big $q$-deformation and Dubrovin operator}

Denote $t=(t_0,t_1,t_2,t_3,t_4,t_5)\in\CC^6$ the coordinates on the complex cohomology
$\H(X;\CC)$ induced by the basis of Schubert classes. Following Fulton-Pandharipande
\cite{FP}, the genus $0$ Gromov-Witten potential of $X$ is a formal power series
$\Phi\in\QQ[[t]]$ given by the sum $\Phi=\Phi_c+\hat{\Phi}$
of a classical part and a quantum part. Denoting $n=(n_0,n_1,n_2,n_3,n_4,n_5)\in\NN^5$
and $|n|=n_0+n_1+n_2+n_3+n_4+n_5$, the classical part is:
$$\Phi_c = \sum_{|n|=3}\langle
\sigma_{\emptyset}^{n_0}\sigma_{\ydiagram{1,0}}^{n_1}\sigma_{\ydiagram{2,0}}^{n_2}
\sigma_{\ydiagram{1,1}}^{n_3}\sigma_{\ydiagram{2,1}}^{n_4}\sigma_{\ydiagram{2,2}}^{n_5},[X]\rangle
\frac{t_0^{n_0}t_1^{n_1}t_2^{n_2}t_3^{n_3}t_4^{n_4}t_5^{n_5}}{n_0!n_1!n_2!n_3!n_4!n_5!} \quad .$$
$$=\frac{1}{2}t_1^2t_3 + \frac{1}{2}t_1^2t_2 + \frac{1}{2}t_0t_3^2 + \frac{1}{2}t_0t_2^2
+t_0t_1t_4 + \frac{1}{2}t_0^2t_5 \quad ,$$
where we used the Littlewood-Richardson rule to simplify the expression.
Denoting $\hat{n}=(n_2,n_3,n_4,n_5)\in\NN^4$ and $|\hat{n}|= n_2 + n_3 + 2n_4 + 3n_5$, the
quantum part is:
$$\hat{\Phi}=\sum_{d=1}^\infty\left(\sum_{|\hat{n}|=4d+1}N(\hat{n})\frac{t_2^{n_2}
t_3^{n_3}t_4^{n_4}t_5^{n_5}}{n_2!n_3!n_4!n_5!}\right)e^{dt_1} \quad .$$
Here $N(\hat{n})\in\QQ$ is a Gromov-Witten number, counting the stable
$J$-holomorphic spheres of class $\beta=d\operatorname{PD}(\sigma_{\ydiagram{1,0}})$
with $n_2$ points mapping to $\operatorname{PD}(\sigma_{\ydiagram{2,0}})$, $n_3$ points
mapping to $\operatorname{PD}(\sigma_{\ydiagram{1,1}})$, $n_4$ points mapping to
$\operatorname{PD}(\sigma_{\ydiagram{2,1}})$, $n_5$ points mapping to $\operatorname{PD}(\sigma_{\ydiagram{2,2}})$.
These curves are rigid thanks to the dimension formula for the moduli space of $J$-holomorphic
spheres of class $\beta$ and
$$|\hat{n}|=n_2\left(\frac{1}{2}|\sigma_{\ydiagram{2,0}}|-1\right)+n_3\left(\frac{1}{2}|\sigma_{\ydiagram{1,1}}|-1\right)+
n_4\left(\frac{1}{2}|\sigma_{\ydiagram{2,1}}|-1\right)+n_5\left(\frac{1}{2}|\sigma_{\ydiagram{2,2}}|-1\right)$$
$$=\dim_\CC(X)+\langle c_1,\beta\rangle-3=4d+1 \quad .$$
The product structure on the big quantum cohomology $\QH^t(X)$ is given by
$$\sigma_i\star_t\sigma_j = \sum_{e,f}\Phi_{ije}g^{ef}\sigma_f \quad ,$$
where $\Phi_{ije}$ denotes the third-order partial derivative of $\Phi$ with respect
to $t_i,t_j,t_e$, and $g^{ef}$ are the entries of the inverse intersection matrix $g^{-1}$.
Dubrovin's operator acts on $\QH^t(X)$ multiplying by
$$K^t = 4\sigma_{\ydiagram{1,0}} + t_0\sigma_{\emptyset} - t_2\sigma_{\ydiagram{2,0}}
-t_3\sigma_{\ydiagram{1,1}} -2t_4\sigma_{\ydiagram{2,1}} -3t_5\sigma_{\ydiagram{2,2}} \quad .$$
Setting $e^{t_1}=q$, we can think of this as $t$-dependent family of operators
with spectra in the Novikov field $\Lambda$. For the purposes of studying the simplicity
of such spectra, we can set $t_0=0$ thanks to the following.

\begin{lemma}
The spectra of $K^t$ and $K^{\hat{t}}$ differ by a translation of $t_0\in\CC$. In particular,
one is simple if and only if the other is.
\end{lemma}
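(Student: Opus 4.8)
The plan is to exploit the string equation, which forces the big quantum product to be independent of the coordinate $t_0$ dual to the fundamental class $\sigma_\emptyset=\sigma_0=[X]$. Once that is established, the statement is pure linear algebra, since the only place $t_0$ enters $K^t$ explicitly is through the term $t_0\sigma_\emptyset$, and multiplication by the unit is the identity operator.

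First I would record the tautology that, writing $\hat t=(0,t_1,\ldots,t_5)$,
$$K^t = K^{\hat t}+t_0\,\sigma_\emptyset \quad .$$
Regarding $K^t\star_t-$ as a linear endomorphism of the fixed vector space $\H(X;\CC)\otimes\Lambda$, and using that $\sigma_\emptyset$ is the unit of $\QH^t(X)$ for every $t$, this gives
$$K^t\star_t - = K^{\hat t}\star_t - + t_0\,(\sigma_\emptyset\star_t -) = K^{\hat t}\star_t - + t_0\operatorname{id} \quad .$$

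The crux — and the only nontrivial step — is to show that the product $\star_t$ itself does not depend on $t_0$, so that $K^{\hat t}\star_t - = K^{\hat t}\star_{\hat t}-$. The structure constants are $C_{ij}^f=\sum_e\Phi_{ije}g^{ef}$, so it suffices to check $\partial_{t_0}\Phi_{ije}=\Phi_{ije0}=0$ for all $i,j,e$. This follows from the string equation $\partial_{t_0}\Phi=\tfrac12\sum_{a,b}g_{ab}t_at_b$: the quantum part $\hat\Phi$ is visibly $t_0$-independent, while the displayed classical part satisfies $\partial_{t_0}\Phi_c=\tfrac12 t_2^2+\tfrac12 t_3^2+t_1t_4+t_0t_5$, which is exactly $\tfrac12\sum_{a,b}g_{ab}t_at_b$ for the recorded intersection matrix $g$. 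Being a homogeneous quadratic, its third derivatives $\Phi_{ije0}$ vanish identically, whence $\partial_{t_0}C_{ij}^f=0$. (Alternatively one may invoke the general genus-zero string equation in place of this explicit check, but the verification against $\Phi_c$ is immediate here.)

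Combining the two steps yields
$$K^t\star_t - = K^{\hat t}\star_{\hat t} - + t_0\operatorname{id} \quad .$$
Adding the scalar operator $t_0\operatorname{id}$ leaves every generalized eigenspace unchanged as a subspace and translates each eigenvalue by $t_0$, so it preserves algebraic multiplicities. Hence the spectrum of $K^t$ equals that of $K^{\hat t}$ translated by $t_0$, and one is simple if and only if the other is. I expect the $t_0$-independence of $\star_t$ to be the only point requiring care; everything surrounding it is formal.
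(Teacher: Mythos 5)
Your proof is correct and its core is the same as the paper's: write $K^t = K^{\hat t} + t_0\sigma_\emptyset$, note that multiplication by the unit is the identity, and conclude that the characteristic polynomial (hence the spectrum) merely translates by $t_0$. The one genuine addition on your side is the verification, via the string equation, that the product $\star_t$ itself is independent of $t_0$ — i.e.\ that $\Phi_{ije0}=0$ because $\partial_{t_0}\Phi=\tfrac12\sum_{a,b}g_{ab}t_at_b$ is quadratic; the paper's one-line proof silently assumes this when it identifies the operator $K^{\hat t}\star_t-$ with $K^{\hat t}\star_{\hat t}-$, so making it explicit is a worthwhile (if routine) tightening rather than a different route.
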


\begin{proof}
Since $K^t = K^{\hat{t}} +t_0\sigma_\emptyset = K^{\hat{t}} + t_0\operatorname{Id}$, the
characteristic polynomials $p_{K^t}(\lambda)$ of $K^t$ and $p_{K^{\hat{t}}}(\lambda)$
of $K^{\hat{t}}$ are related by $p_{K^t}(\lambda)=p_{K^{\hat{t}}}(\lambda-t_0)$.
\end{proof}

From now on, denote $\hat{t}=(t_2,t_3,t_4,t_5)\in\CC^4$, and think of $K^{\hat{t}}$
as a family of operators with spectra in the Novikov field $\Lambda$.

\subsection{Finite energy truncations}

After fixing a basis, represent the Dubrovin operator with a matrix
$M^{\hat{t}}$ with entries in $\Lambda$. Decompose
$M^{\hat{t}}=\sum_{d\geq 0}M^{\hat{t}}_dq^d$,
where $M^{\hat{t}}_d$ are matrices with entries in $\CC$ containing the coefficients
of $q^d$. The sum runs over Novikov exponents $d\geq0$ since the Gromov-Witten
potential $\Phi$ and its third-order derivatives do not contain negative powers of the
formal parameter $q$.

\begin{definition}\label{DefFiniteEnergyTruncation}
The truncation of $K^{\hat{t}}$ at energy $\alpha\in\RR_{\geq 0}$ is the
$\Lambda$-linear operator $K^{\hat{t}}_{<\alpha}$ associated to the matrix
$$M^{\hat{t}}_{<\alpha}=\sum_{0\leq d<\alpha}M^{\hat{t}}_dq^d \quad .$$
\end{definition}

\begin{remark}
While suppressed in the notation, the finite
energy truncations $K^{\hat{t}}_{<\alpha}$ do depend on the basis chosen to represent
$K^{\hat{t}}$ with a matrix. To see this, observe for example that the following matrices
are similar over $\Lambda$ but have different finite energy truncations:
$$
\begin{pmatrix}
q^{-1} & 0 \\
0 & q^{-2}
\end{pmatrix}
\begin{pmatrix}
0 & q \\
q & 0
\end{pmatrix}
\begin{pmatrix}
q & 0 \\
0 & q^2
\end{pmatrix}
=
\begin{pmatrix}
0 & q^2 \\
1 & 0
\end{pmatrix}
$$
\end{remark}

Simplicity of the
spectrum of $K^{\hat{t}}$ and its truncations are can be related as follows.
Denote by $\Delta^{\hat{t}} = \operatorname{Res}(p_{K^{\hat{t}}},p_{K^{\hat{t}}}')\in\Lambda$
the discriminant of $K^{\hat{t}}$, i.e. the resultant of the characteristic polynomial
of $K^{\hat{t}}$ and its derivative. Also denote $\Delta^{\hat{t}}_{<\alpha}\in\Lambda$
the discriminant of the truncation $K^{\hat{t}}_{<\alpha}$.

\begin{proposition}\label{PropSimpleTruncations}
The following implications hold:
\begin{enumerate}
	\item if $K^{\hat{t}}$ has simple spectrum, then there is some $\alpha>\operatorname{val}(\Delta^{\hat{t}})$
	such that $K^{\hat{t}}_{<\alpha}$ has simple spectrum ;
	\item if $\Delta^{\hat{t}}_{<\alpha}\neq 0$ modulo $q^\alpha$, then $K^{\hat{t}}$ has simple spectrum .
\end{enumerate}
\end{proposition}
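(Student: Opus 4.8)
The plan is to reduce both implications to a single congruence between the discriminant of $K^{\hat{t}}$ and that of its truncation. First I would record the standard fact that, since the Novikov field $\Lambda$ is algebraically closed, an operator has simple spectrum precisely when the discriminant of its characteristic polynomial is nonzero: as $\Delta^{\hat{t}}=\operatorname{Res}(p_{K^{\hat{t}}},p_{K^{\hat{t}}}')$ and $p_{K^{\hat{t}}}$ is monic, the resultant vanishes if and only if $p_{K^{\hat{t}}}$ has a repeated root, and all roots lie in $\Lambda$. Thus simple spectrum of $K^{\hat{t}}$ is equivalent to $\Delta^{\hat{t}}\neq 0$, and likewise for $K^{\hat{t}}_{<\alpha}$ and $\Delta^{\hat{t}}_{<\alpha}$, so the proposition becomes a statement about vanishing of these two elements of $\Lambda$.

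The key step is the congruence
$$\Delta^{\hat{t}} \equiv \Delta^{\hat{t}}_{<\alpha} \pmod{q^\alpha} \quad .$$
To establish it, I would work in the valuation subring $R=\{x\in\Lambda : \operatorname{val}(x)\geq 0\}$ and its ideal $I_\alpha=\{x\in R : \operatorname{val}(x)\geq \alpha\}$. Since $M^{\hat{t}}=\sum_{d\geq 0}M^{\hat{t}}_dq^d$ has all entries in $R$, and $M^{\hat{t}}-M^{\hat{t}}_{<\alpha}=\sum_{d\geq\alpha}M^{\hat{t}}_dq^d$ has all entries in $I_\alpha$, the two matrices are congruent entrywise modulo $I_\alpha$. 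The discriminant is a universal polynomial with integer coefficients in the matrix entries: the resultant is an integer polynomial (a Sylvester determinant) in the coefficients of the characteristic polynomial, and those coefficients are integer polynomials in the entries of the matrix. Hence forming $\Delta$ commutes with the quotient homomorphism $R\to R/I_\alpha$, and applying this homomorphism to $M^{\hat{t}}$ and $M^{\hat{t}}_{<\alpha}$ yields the congruence. Note that both characteristic polynomials are monic of the same degree, so no collapse of the Sylvester matrix occurs and the reduction is genuinely compatible.

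With the congruence in hand, both implications are immediate. For (1): simple spectrum of $K^{\hat{t}}$ gives $\Delta^{\hat{t}}\neq 0$, so $v=\operatorname{val}(\Delta^{\hat{t}})$ is finite; choosing any $\alpha>v$, the congruence forces the $q^v$-coefficients of $\Delta^{\hat{t}}$ and $\Delta^{\hat{t}}_{<\alpha}$ to coincide, and since that coefficient is nonzero we get $\Delta^{\hat{t}}_{<\alpha}\neq 0$, so the truncation has simple spectrum. For (2): the hypothesis $\Delta^{\hat{t}}_{<\alpha}\neq 0$ modulo $q^\alpha$ says $\operatorname{val}(\Delta^{\hat{t}}_{<\alpha})<\alpha$; by the congruence this lowest-order nonzero term also appears in $\Delta^{\hat{t}}$, whence $\Delta^{\hat{t}}\neq 0$ and $K^{\hat{t}}$ has simple spectrum.

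The only genuinely delicate point is verifying that $I_\alpha$ is an ideal of $R$ and that the discriminant is formed by an integer (equivalently, valuation-nonnegative) polynomial in the entries, so that its formation truly commutes with reduction modulo $I_\alpha$; this congruence is the entire content of the argument, and everything after it is bookkeeping with the $q$-adic valuation. I therefore expect the verification of the congruence, rather than the two formal deductions, to be where care is required.
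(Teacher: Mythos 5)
Your proposal is correct and follows essentially the same route as the paper: both arguments hinge on the congruence $\Delta^{\hat{t}} \equiv \Delta^{\hat{t}}_{<\alpha} \pmod{q^\alpha}$, which the paper justifies in one line (``the discriminant is a polynomial function of the matrix entries'') and which you expand into the valuation-ring/integer-polynomial argument, followed by the same valuation bookkeeping for the two implications. Your added detail on why the congruence holds, and on the equivalence between simple spectrum and nonvanishing discriminant, is a faithful elaboration rather than a different proof.
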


\begin{proof}
(1) If $K^{\hat{t}}$ has simple spectrum then $\Delta^{\hat{t}}\neq 0$, and so it has finite valuation
$\operatorname{val}(\Delta^{\hat{t}})<\infty$. Now observe that $\Delta^{\hat{t}} = \Delta^{\hat{t}}_{<\alpha}$
modulo $q^\alpha$ because the discriminant is a polynomial function of the matrix entries.
Since the left hand side is nonzero modulo $q^\alpha$ for $\alpha>\operatorname{val}(\Delta^{\hat{t}})$,
it must be the case that $\Delta^{\hat{t}}_{<\alpha}\neq 0$, hence $K^{\hat{t}}_{<\alpha}$ has simple spectrum.
\\
(2) Using again $\Delta^{\hat{t}} = \Delta^{\hat{t}}_{<\alpha}$
modulo $q^\alpha$ and the assumption that the right hand side is nonzero, it follows
that $\Delta^{\hat{t}}\neq 0$, hence $K^{\hat{t}}$ has simple spectrum.
\end{proof}

Crucially, finite energy truncations are determined by finitely many
Gromov-Witten numbers, and these are typically computable from a small set using
the recursion relations given by the WDVV equations. In our case the Gromov-Witten numbers
$N(\hat{n})\in\QQ$ with $|\hat{n}|=1$ are written below (see e.g. Di Francesco-Itzykson
\cite[Proposition 9]{DI}), and can be obtained by recursion from $N(0,0,1,1)=1$:

\begin{center}
\begin{tabular}{ l l l l }
$N(5,0,0,0)=0$ & $N(4,1,0,0)=0$ & $N(3,2,0,0)=1$ & $N(3,0,1,0)=0$\\
$N(2,1,1,0)=1$ & $N(2,0,0,1)=0$ & $N(1,1,0,1)=1$ & $N(1,0,2,0)=1$\\
$N(0,0,1,1)=1$\\
\end{tabular}
\end{center}

Note that these numbers are non-negative integers, because
the target is a homogeneous variety; see \cite[Lemma 14]{FP}. Below is the matrix
of the truncation $(K^{\hat{t}})_{< 2}$ with respect to the basis of
Schubert classes. Its calculation only involves the Gromov-Witten numbers $N(\hat{n})\in\QQ$ with $|\hat{n}|=1$.

\begin{center}
\setlength{\tabcolsep}{6pt}
\renewcommand{\arraystretch}{2}
\begin{tabular}{ cm{0.1\textwidth} cm{0.1\textwidth} cm{0.1\textwidth} cm{0.1\textwidth} cm{0.1\textwidth} cm{0.1\textwidth} }
$0$ &
$(-2t_2t_3+2t_4+4t_4t_5)q$ &
$3t_3q$ &
$3t_2q$ &
$4q$ &
$0$\\
$4$ &
$(1/2t_2^2t_3+t_2t_4+t_5)q$ &
$(2t_2t_3+2t_4)q$ &
$t_2^2q$ &
$3t_2q$ &
$4q$\\
$-t_2$ &
$4$ &
$(1/2t_2t_3^2+t_3t_4)q$ &
$(1/2t_2^2t_3+t_2t_4+t_5)q$ &
$(2t_2t_3+2t_4)q$ &
$3t_3q$\\
$-t_3$ &
$4$ &
$(1/2t_2^2t_3+t_2t_4+t_5)q$ &
$1/6t_2^3q$ &
$t_2^2q$ &
$3t_2q$\\
$-2t_4$ &
$-t_2-t_3+(1/12t_2^3t_3^2-1/2t_2^2t_3t_4+2t_2t_3t_5-1/2t_2t_4^2-t_4t_5-3t_4t_5^2)q$ &
$4$ &
$4$ &
$(1/2t_2^2t_3+t_2t_4+t_5)q$ &
$(-2t_2t_3+2t_4+4t_4t_5)q$\\
$-3t_5$ &
$-2t_4$ &
$-t_2$ &
$-t_3$ &
$4$ &
$0$\\
\end{tabular}
\end{center}

\subsection{Decoupling}

The Grassmannian $\Gr(2,4)$ is perhaps one of the simplest examples where the
the spectrum of the Dubrovin operator is not simple for all bulk-parameters.

\begin{lemma}(see for example \cite[Proposition 1.12]{C20})
The eigenvalues of $K^0=c_1$ are $\pm 4\sqrt{2}$, $\pm 4\sqrt{2}i$, $0$. Moreover $0$
has algebraic multiplicity $2$.
\end{lemma}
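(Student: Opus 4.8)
The plan is to reduce the statement to an explicit characteristic polynomial computation for the $6\times 6$ matrix representing the operator $K^0=c_1\star- = 4\sigma_{\ydiagram{1,0}}\star-$ of quantum multiplication by $c_1$ in the Schubert basis $\sigma_0,\ldots,\sigma_5$. First I would record this matrix, which is exactly the specialization at $\hat t=0$ (i.e.\ $t_2=t_3=t_4=t_5=0$, so $B=0$) of the displayed matrix of $(K^{\hat t})_{<2}$; concretely its entries encode the quantum Pieri products $c_1\star\sigma_0=4\sigma_1$, $\ c_1\star\sigma_1=4(\sigma_2+\sigma_3)$, $\ c_1\star\sigma_2=c_1\star\sigma_3=4\sigma_4$, $\ c_1\star\sigma_4=4\sigma_5+4q\sigma_0$, $\ c_1\star\sigma_5=4q\sigma_1$. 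Before computing I would justify that this truncated matrix is the \emph{full} operator $K^0$: multiplication by the degree-$2$ class $c_1$ produces only $q^0$ and $q^1$ terms, since $q$ carries cohomological degree $2\langle c_1,\beta_{\min}\rangle=8$ while $\H(X;\CC)$ is concentrated in degrees $0$ through $8$, so a $q^d$ contribution with $d\geq 2$ would land outside the cohomology. Hence $K^0=(K^0)_{<2}$ and no higher $q$-powers are silently discarded.

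Next I would compute $p(\lambda)=\det(\lambda I-K^0)$. The matrix is very sparse, with the two nonzero $q$-entries confined to a single column; cofactor-expanding along that column, and then repeatedly along the columns of the resulting minors that contain a single nonzero entry, collapses the problem to two $3\times 3$ determinants, both of which evaluate to $32\lambda$. Assembling the pieces yields
$$p(\lambda)=\lambda^{6}-1024\,q\,\lambda^{2}=\lambda^{2}\bigl(\lambda^{4}-1024\,q\bigr).$$
As sanity checks I would note that $\operatorname{tr}(K^0)=0$ matches the vanishing $\lambda^5$ coefficient, and that $p$ has no $\lambda^4$ or $\lambda^3$ term, consistent with the symmetric placement of the nonzero roots.

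Finally I would read off the spectrum. The factor $\lambda^4-1024\,q$ does not vanish at $\lambda=0$ for $q\neq 0$, so $\lambda=0$ occurs with algebraic multiplicity exactly $2$, while the remaining four eigenvalues are the fourth roots of $1024\,q$. Specializing to $q=1$ and using $1024^{1/4}=(2^{10})^{1/4}=4\sqrt{2}$, these are $4\sqrt{2}\cdot\{1,i,-1,-i\}=\{\pm 4\sqrt{2},\ \pm 4\sqrt{2}\,i\}$, which is the claimed list.

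I expect no genuine obstacle here: the only real work is the careful evaluation of the $6\times 6$ determinant, where the two points to get right are the signs in the iterated cofactor expansions and the preliminary degree argument confirming that the displayed truncation is the full operator. Should one prefer to bypass the determinant, I would instead verify the factorization by exhibiting eigenvectors directly: a short linear computation shows $\ker K^0$ is two-dimensional, spanned by $\sigma_2-\sigma_3$ and $\sigma_5-q\sigma_0$, so $0$ is a genuine eigenvalue of (geometric, hence at least algebraic) multiplicity $2$, and $K^0$ maps the complementary $c_1$-cyclic span spanned by $\sigma_0,\sigma_1,\sigma_2+\sigma_3,\sigma_4,\sigma_5$ in a way whose fourth power acts as $1024\,q$, recovering the four remaining eigenvalues and in particular showing $K^0$ is diagonalizable.
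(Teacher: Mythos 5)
Your computation is correct, and it is genuinely different from what the paper does: the paper offers no proof at all for this lemma, deferring entirely to the citation \cite[Proposition 1.12]{C20}. Your route makes the statement self-contained. I verified the key steps: the matrix you write down is indeed the $\hat t=0$ specialization of the displayed matrix of $(K^{\hat t})_{<2}$, encoding the quantum Pieri products $c_1\star\sigma_0=4\sigma_1$, $c_1\star\sigma_1=4(\sigma_2+\sigma_3)$, $c_1\star\sigma_2=c_1\star\sigma_3=4\sigma_4$, $c_1\star\sigma_4=4\sigma_5+4q\sigma_0$, $c_1\star\sigma_5=4q\sigma_1$; the characteristic polynomial is $p(\lambda)=\lambda^{2}(\lambda^{4}-1024\,q)$, giving $0$ with algebraic multiplicity exactly $2$ and, at $q=1$, the four simple eigenvalues $4\sqrt{2}\cdot\{1,i,-1,-i\}$ since $1024^{1/4}=4\sqrt{2}$. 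Your preliminary degree argument that $K^0=(K^0)_{<2}$ (because $q$ has cohomological degree $8$ while $\H(X;\CC)$ lives in degrees $0$ through $8$) is a worthwhile addition that the paper leaves implicit; without it, reading the operator off the truncated matrix would be circular.

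One slip, confined to your optional closing paragraph: the span of $\sigma_0,\sigma_1,\sigma_2+\sigma_3,\sigma_4,\sigma_5$ is five-dimensional and is \emph{not} complementary to $\ker K^0$ — it contains the kernel vector $\sigma_5-q\sigma_0$. The correct invariant complement is the four-dimensional image of $K^0$, spanned by $\sigma_1$, $\sigma_2+\sigma_3$, $\sigma_4$, $\sigma_5+q\sigma_0$, and one checks $(K^0)^4=1024\,q\,\mathrm{Id}$ there (e.g.\ $(K^0)^4(\sigma_5+q\sigma_0)=1024\,q(\sigma_5+q\sigma_0)$, whereas $(K^0)^4\sigma_0=512(\sigma_5+q\sigma_0)\neq 1024\,q\,\sigma_0$). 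Since that paragraph is an alternative you offer only as a bypass, the main determinant proof is unaffected; just fix the complement if you keep the eigenvector version.
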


This section analyzes the spectrum of the finite energy truncation 
$K^{\hat{t}}_{< 2}$ computed earlier one bulk-parameter at the time, i.e. allowing only
one bulk-parameter to be non-vanishing. This corresponds to analyzing the
quantum contributions of $J$-holomorphic spheres with incidence condition on a single
Schubert class.

\begin{itemize}

\item \textbf{Schubert class $\sigma_2=\sigma_{\ydiagram{2,0}}$: $t_3=t_4=t_5=0$.} The
truncation $K^{\hat{t}}_{< 2}$ specializes to the $t_2$-dependent family of operators

$$
\begin{pmatrix}
0 &
0 &
0 &
3t_2q &
4q &
0\\
4 &
0 &
0 &
t_2^2q &
3t_2q &
4q\\
-t_2 &
4 &
0 &
0 &
0 &
0\\
0 &
4 &
0 &
1/6t_2^3q &
t_2^2q &
3t_2q\\
0 &
-t_2 &
4 &
4 &
0 &
0\\
0 &
0 &
-t_2 &
0 &
4 &
0\\
\end{pmatrix}
$$

whose discriminant is $\Delta^{t_2}_{<2}=(-1208925819614629174706176 t_2^2)q^8 + o(q^8)$. In
particular, these have simple spectrum for $t_2\neq 0$.

\item \textbf{Schubert class $\sigma_3=\sigma_{\ydiagram{1,1}}$, $t_2=t_4=t_5=0$.} The
truncation $K^{\hat{t}}_{<2}$ specializes to the $t_3$-dependent family of operators

$$
\begin{pmatrix}
0 &
0 &
3t_3q &
0 &
4q &
0\\
4 &
0 &
0 &
0 &
0 &
4q\\
0 &
4 &
0 &
0 &
0 &
3t_3q\\
-t_3 &
4 &
0 &
0 &
0 &
0\\
0 &
-t_3 &
4 &
4 &
0 &
0\\
0 &
0 &
0 &
-t_3 &
4 &
0\\
\end{pmatrix}
$$

whose discriminant is $\Delta^{t_3}_{<2}=(-1208925819614629174706176t_3^2)q^8 + o(q^8)$.
In particular, these have simple spectrum for $t_3\neq 0$.

\item \textbf{Schubert class $\sigma_4=\sigma_{\ydiagram{2,1}}$, $t_2=t_3=t_5=0$.} The
truncation $K^{\hat{t}}_{<2}$ specializes to the $t_4$-dependent family of operators

$$
\begin{pmatrix}
0 &
2t_4q &
0 &
0 &
4q &
0\\
4 &
0 &
2t_4q &
0 &
0 &
4q\\
0 &
4 &
0 &
0 &
2t_4q &
0\\
0 &
4 &
0 &
0 &
0 &
0\\
-2t_4 &
0 &
4 &
4 &
0 &
2t_4q\\
0 &
-2t_4 &
0 &
0 &
4 &
0\\
\end{pmatrix}
$$

whose discriminant is $\Delta^{t_4}_{<2}=0$. In particular, these have non-simple spectrum
for all $t_4\in\CC$.

\item \textbf{Schubert class $\sigma_5=\sigma_{\ydiagram{2,2}}$, $t_2=t_3=t_4=0$.} The
truncation $K^{\hat{t}}_{<2}$ specializes to the $t_5$-dependent family of operators

$$
\begin{pmatrix}
0 &
0 &
0 &
0 &
4q &
0\\
4 &
t_5q &
0 &
0 &
0 &
4q\\
0 &
4 &
0 &
t_5q &
0 &
0\\
0 &
4 &
t_5q &
0 &
0 &
0\\
0 &
0 &
4 &
4 &
t_5q &
0\\
-3t_5 &
0 &
0 &
0 &
4 &
0\\
\end{pmatrix}
$$

whose discriminant is $\Delta^{t_5}_{<2}=(1888946593147858085478400t_5^2)q^9 + o(q^9)$.
In particular, these have simple spectrum for $t_5\neq 0$.

\end{itemize}

The calculations above have been used
to produce Figure \ref{FigSpectralMovie}, and can be summarized in the following theorem.

\begin{theorem}\label{ThmDecouplingGr24}
Consider the Grassmannian $X=\Gr(2,4)$ and the energy $\alpha=2$. The truncation
of $(K^B)_{<2}$ has simple spectrum when $B\neq 0$ is supported on the Schubert
cycles $X_{\ydiagram{2,0}}, X_{\ydiagram{1,1}}, X_{\ydiagram{2,2}}\subset\Gr(2,4)$, while
it doesn't when it is supported on the Schubert cycle $X_{\ydiagram{2,1}}\subset\Gr(2,4)$.
\end{theorem}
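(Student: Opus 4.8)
The plan is to reduce the statement, one bulk-parameter at a time, to the explicit discriminant computations recorded in the preceding subsection. Since the theorem concerns a bulk-deformation $B$ supported on a single Schubert cycle $X_d$, I would turn on only the coordinate $t_d$ dual to $X_d$ and set the remaining coordinates to zero; by the preceding lemma on $t_0$-translations we may in particular take $t_0 = 0$, so that after this specialization $(K^{\hat t})_{<2}$ becomes a one-parameter family of $6\times 6$ matrices over $\CC[t_d][q]$. The entries of this family are read off from the matrix of $(K^{\hat t})_{<2}$ displayed above, whose only quantum input are the Gromov--Witten numbers $N(\hat n)$ recorded above; these are in turn fixed by the WDVV recursion from the single seed $N(0,0,1,1) = 1$. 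Thus the whole computation is reduced to finitely many exact rational numbers.

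For each of the four cycles I would then compute the characteristic polynomial $p(\lambda) \in \CC[t_d][q][\lambda]$ of the specialized matrix and its discriminant $\Delta^{t_d}_{<2} = \operatorname{Res}(p, p') \in \CC[t_d][q]$. The key elementary fact is that the truncation $(K^B)_{<2}$ has simple spectrum over $\Lambda$ precisely when $\Delta^{t_d}_{<2} \neq 0$: a nonzero discriminant forces the six roots of $p$ to be pairwise distinct in the algebraic closure of the Novikov field, while $\Delta^{t_d}_{<2} = 0$ signals a root of multiplicity at least two, i.e. an eigenvalue of algebraic multiplicity at least two. The theorem is therefore equivalent to deciding the vanishing of four explicit elements of $\CC[t_d][q]$. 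The outcome is the claimed dichotomy: for $X_d = X_{\ydiagram{2,0}}$ and $X_d = X_{\ydiagram{1,1}}$ the discriminant has leading term a nonzero constant times $t_d^2 q^8$, and for $X_d = X_{\ydiagram{2,2}}$ a nonzero constant times $t_d^2 q^9$, so that $\Delta^{t_d}_{<2} \neq 0$ whenever $t_d \neq 0$ and the spectrum is simple; by contrast, for $X_d = X_{\ydiagram{2,1}}$ one finds $\Delta^{t_4}_{<2} = 0$ identically, so the characteristic polynomial retains a repeated root for every value of $t_4$ and the spectrum is never simple.

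The hard part is purely computational rather than conceptual: the resultant of a degree-six polynomial against its derivative is a large symbolic expression, and one must propagate the coefficients in $\CC[t_d][q]$ exactly in order to certify both the precise nonvanishing leading terms and, more delicately, the identical vanishing $\Delta^{t_4}_{<2} = 0$. In practice I would first expand the determinant $p(\lambda)$ along the sparse rows produced by the specialization, and only then form the resultant with $p'(\lambda)$. It is worth understanding the vanishing in the $X_{\ydiagram{2,1}}$ case structurally: it records the persistence, under this particular bulk-deformation, of the algebraic-multiplicity-two eigenvalue $0$ of $K^0 = c_1$, a feature visible in the $X_{\ydiagram{2,1}}$ panels of Figure \ref{FigSpectralMovie}.
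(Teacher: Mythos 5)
Your proposal is correct and takes essentially the same route as the paper: the paper's proof consists precisely of specializing $(K^{\hat t})_{<2}$ one bulk parameter at a time (with $t_0=0$ by the translation lemma), building the matrix from the WDVV-determined Gromov--Witten numbers, and reading simplicity off the discriminant $\operatorname{Res}(p,p')$, with the same leading terms (order $q^8$, $q^8$, $q^9$ with coefficient proportional to $t_d^2$) and the same identical vanishing $\Delta^{t_4}_{<2}=0$ in the remaining case.
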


In view of Proposition \ref{PropSimpleTruncations}, this theorem suggests that
$K^B$ could have simple spectrum whenever $B$ is supported on the Schubert cycles
$X_{\ydiagram{2,0}}, X_{\ydiagram{1,1}}, X_{\ydiagram{2,2}}\subset\Gr(2,4)$.
However, one cannot conclude this with the given information, since $(\Delta^B)_{<2}=0$
modulo $q^2$ in each of these cases. Instead, the calculations with $B$ supported
on the Schubert cycle $X_{\ydiagram{2,1}}\subset\Gr(2,4)$
say that in this case either $K^B$ has non-simple spectrum, or
$\operatorname{val}(\Delta^B)\geq 2$.

\bibliographystyle{abbrv}
\bibliography{biblio}

\end{document}